\documentclass[12pt]{article}
\usepackage[a4paper,top=2cm,bottom=2cm,left=1in,right=1in,marginparwidth=1.75cm]{geometry}
\usepackage[T1]{fontenc} 
\usepackage[utf8]{inputenc} 
\usepackage{lmodern} 
\usepackage{amsmath,amsthm,amsfonts,amssymb,mathtools,bbm}	
\usepackage{enumitem} 
\usepackage{microtype} 
\usepackage{mleftright} 
\usepackage{braket} 
\usepackage{esint} 
\mleftright

\usepackage{authblk}

\newtheorem{theorem}{Theorem}[section]
\newtheorem{proposition}[theorem]{Proposition}
\newtheorem{lemma}[theorem]{Lemma}
\newtheorem{corollary}[theorem]{Corollary}

\theoremstyle{definition}
\newtheorem{definition}[theorem]{Definition}

\theoremstyle{remark}
\newtheorem{remark}[theorem]{Remark}

\mathtoolsset{showonlyrefs}
\numberwithin{equation}{section}

\usepackage[hidelinks]{hyperref}

\def\XXint#1#2#3{{\setbox0=\hbox{$#1{#2#3}{\int}$}
\vcenter{\hbox{$#2#3$}}\kern-.5\wd0}}

\usepackage[breakable]{tcolorbox}
\usepackage{xcolor}
\usepackage{soul}
\soulregister\cite7
\soulregister\underline7
\soulregister\eqref7
\soulregister\ref7
\definecolor{commentcolor}{RGB}{230,230,230}
\definecolor{changecolor}{RGB}{230,230,200}
\definecolor{warningcolor}{RGB}{250,200,190}

\newtcolorbox{warningbox}{
    colback=warningcolor, 
    colframe=warningcolor, 
    boxrule=0pt,
    sharp corners, 
    left=0pt,
    right=0pt,
    top=0pt,
    bottom=0pt,
breakable}

\newtcolorbox{changebox}{
    colback=changecolor, 
    colframe=changecolor, 
    boxrule=0pt,
    sharp corners, 
    left=0pt,
    right=0pt,
    top=0pt,
    bottom=0pt,
breakable}

\newtcolorbox{commentbox}{
    colback=commentcolor, 
    colframe=commentcolor, 
    boxrule=0pt,
    sharp corners, 
    left=0pt,
    right=0pt,
    top=0pt,
    bottom=0pt,
breakable}

\DeclareMathOperator{\dist}{dist}
\newcommand{\dd}{\partial}
\newcommand{\abs}[1]{\lvert #1 \rvert}
\newcommand{\card}[1]{\operatorname{card}(#1)}
\newcommand{\R}{\mathbb{R}}
\newcommand{\N}{\mathbb{N}}

\newcommand{\D}{\nabla}
\newcommand{\A}{\mathbf{A}}
\newcommand{\spt}{\mathrm{spt}}
\renewcommand{\H}{\mathcal{H}}

\title{Good access to the crack and integrability of the full gradient for Griffith almost-minimizers in the plane}
\author{Camille Labourie, Lorenzo Lamberti, Antoine Lemenant\thanks{The three authors are affiliated to Université de Lorraine, CNRS, IECL, F-54000 Nancy, France.}}
\date{}

\begin{document}

\maketitle


\begin{abstract}
    We show that, for almost-minimizers of the Griffith energy in the plane, the complement of the crack is locally
    covered by a finite number of John domains where the displacement satisfies Hölder-type estimates. 
      As a consequence, we derive the   integrability of the full gradient and the finiteness of the traces along the crack. In particular, we show that any Griffith almost-minimizer in dimension two locally belongs  to the space $SBV^2$. 
\end{abstract}
 
\bigskip

\noindent{\bf Keywords:}
Griffith functional; free discontinuity problems; brittle fractures; linear elasticity; escape paths.

\smallskip

\noindent{\bf MSC 2020:} 49Q20 (Primary) 74G65, 74R10 (Secondary).

\tableofcontents

\section{Introduction}
The Griffith energy provides a variational model for brittle fracture, where crack formation arises from the competition
between the elastic bulk energy and the surface energy of the crack. Given a bounded open set $\Omega \subset \R^N$
(the reference configuration of a linearly elastic body), the energy associated with a displacement field
$u \colon \Omega \setminus K \longrightarrow \R^N$ and a crack set $K \subset \Omega$ is given by
\begin{equation}
    \mathcal{G}(u,K)
    = \int_{\Omega \setminus K} \A e(u) : e(u)\,dx + \H^{N-1}(K),
\end{equation}
where $e(u)$ denotes the symmetric gradient and $\A$ is a positive definite elasticity tensor. In this model, the crack $K$ is
not prescribed a priori; its geometry instead is determined by the penalization of its $(N-1)$-dimensional Hausdorff measure.

A natural functional setting for this problem is the space $GSBD(\Omega)$ of generalized
special functions of bounded deformation, see \cite{dalmaso}, where the jump set represents the crack.  Within this
framework, compactness and lower semicontinuity results ensure the existence of minimizers of the Griffith
functional under general boundary conditions \cite{ChambolleCrismaleGSBD, ChambolleCrismaleGSBD2}. Furthermore,
minimizers are strong solutions in the sense that their jump set is essentially closed and the displacement
is smooth outside the crack \cite{CFI, CCI, ChambolleCrismaleStrong, FLK2}.

Beyond existence, the regularity of minimizers and their crack sets has become a major focus of recent research. An 
$\varepsilon$-regularity theory was pioneered in \cite{BabadjianIurlanoLemenant} for minimizers in the plane satisfying a
connectedness condition. Subsequent work refined the structure of the singular set \cite{LabourieLemenant},
and extended these results to arbitrary dimension under a separating condition \cite{LabourieLemenant2}. More recently,
an $\varepsilon$-regularity theorem was obtained for all almost-minimizers in the plane, yielding
$C^{1,\alpha}$ regularity at almost-every point on the crack \cite{FriedrichLabourie}.

In the present paper, we show uniform bounds for the geometric structure of the crack which, unlike 
$\varepsilon$-regularity, hold at all scales and locations. Our first main result states that each point outside the
crack has an ``escape path'' along which the distance to the crack increases proportionally to the arclength. This implies
that the components of $\Omega \setminus K$ cannot exhibit cusps, small acute angles, or narrow passes, and are
moreover locally finite. Our statement is as follows, and applies to topological almost-minimizers, a class that includes almost-minimizers, global minimizers, and minimizers as particular cases (see Section \ref{preliminaries} for the precise definitions).

\begin{proposition}\label{intro1}
    There exist two constants $\alpha_J\in(0,1)$ and $\tau>0$, depending only on $\A$, such that if $(u,K)$ is
    a topological almost-minimizer in $\Omega$ with gauge $h$, if $x_0\in\Omega\setminus K$ and $\ell > 0$ are such that
    \begin{equation}
        B(x_0,\ell) \subset \Omega \quad \text{and} \quad h(\ell) \leq \tau,
    \end{equation}
    then there exists a 1-Lipschitz curve $\gamma\colon[0,\ell]\longrightarrow\Omega\setminus K$ such that $\gamma(0)=x_0$ and
    \begin{equation*}
        \dist(\gamma(t),K)\geq \alpha_J t,\quad\forall t\in[0,\ell].
    \end{equation*}
\end{proposition}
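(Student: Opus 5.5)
We follow the classical strategy of David and Semmes for Mumford--Shah minimizers ("escape paths" / John domains), adapted to the Griffith setting. The construction reduces to a \emph{one-step lemma}. There are constants $\varepsilon_0\in(0,1)$ and $C_0\geq 1$, depending only on $\A$, with the following property. Let $x\in\Omega\setminus K$ and $r>0$ satisfy $B(x,r)\subset\Omega$, $h(r)\le\tau$, and $\dist(x,K)\le\varepsilon_0 r$. Then there is a point $y\in B(x,r/2)$ with $\dist(y,K)\ge 2\dist(x,K)$ that is joined to $x$ in $\Omega\setminus K$ by a path $\sigma$ of length at most $C_0\dist(x,K)$ satisfying $\dist(\sigma(s),K)\ge C_0^{-1}\dist(x,K)$ along it.

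Iterating the lemma from $x_0$ doubles the distance to $K$ at each step. The lengths form a geometric series, so the concatenated curve has length comparable to the final distance, and at every point the distance to $K$ stays above a fixed fraction of the arclength travelled. Once the distance exceeds $\varepsilon_0 r$ for the current scale, we either continue within $B(x_0,\ell)$ or stop; in the latter case we complete with a straight segment, which is harmless because the ball of radius $\varepsilon_0\ell$ around the endpoint misses $K$. Reparametrizing by arclength and extending by a constant if the curve is shorter than $\ell$ gives a 1-Lipschitz $\gamma$ with $\dist(\gamma(t),K)\ge\alpha_J t$. The hypotheses $B(x_0,\ell)\subset\Omega$ and $h(\ell)\le\tau$ control every ball used, since they are all contained in $B(x_0,\ell)$ and $h$ is nondecreasing.

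The heart of the argument is the one-step lemma, which we prove by contradiction and compactness. If it fails, there are almost-minimizers $(u_k,K_k)$ with gauges $h_k(r)\to 0$ (letting $\tau\to 0$ along the sequence), together with points $x_k$ (normalized by blow-up to $x_k=0$, $r=1$, $\dist(0,K_k)=\delta_k\to0$) from which no good path exists. Rescaling by $\delta_k$ instead, we may use the compactness theory for topological almost-minimizers from the preliminaries. This theory gives Hausdorff convergence of the cracks, Ahlfors regularity of $K$, and convergence of energies. The limit is a global (topological) Griffith minimizer $(u,K)$ in $\R^2$ with $\dist(0,K)=1$ such that, in the limit, $0$ cannot be connected to any point of $B(0,R)$ at distance at least $2$ from $K$ through a tube of fixed width. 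Equivalently, the connected component of $\{\dist(\cdot,K)>c\}$ containing $0$ stays within bounded distance of $K$. To contradict this, we use the planar structure: uniform rectifiability of $K$ and the density bounds (Ahlfors regularity) imply that $K$ has holes at every scale. A component of $\R^2\setminus K$ that is thin at all scales, trapped between pieces of $K$, would allow a competitor that removes a piece of $K$ of length $\sim\rho$ while changing the elastic energy by only $o(\rho)$. The elastic energy in a thin region is controlled by a Korn-type inequality in the adjacent domain plus the decay $\int_{B_\rho} \A e(u):e(u)\lesssim \rho$. Hence such configurations are not minimal.

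The main obstacle is precisely this last step: unlike Mumford--Shah, the Griffith energy controls only $e(u)$. Removing a crack piece therefore requires extending $u$ across it with controlled symmetric gradient, and Korn's inequality degenerates in thin or cusp-like domains, which are exactly the configurations we are trying to exclude. Our first approach is to use the piecewise Korn inequality and the extension lemmas for planar Griffith minimizers (as in \cite{FriedrichLabourie}) in a ball where $K$ is $\varepsilon$-flat or close to a few segments. Such balls are found by the rectifiability-based "good ball" selection at a scale comparable to $\dist(x,K)$. Uniform control of the quantitative constants there should provide the free region needed to escape, and hence the required contradiction.
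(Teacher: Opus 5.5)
Your reduction is the same as the paper's: a one-step ``doubling'' lemma proved by contradiction and blow-up to a global minimizer (Lemma~\ref{LemmaCurvaPreliminareLoc} together with Remark~\ref{rmk_curve}), followed by a geometric iteration. That part is fine. The gap is the step that actually produces the contradiction in the limit. You must show that if $(u,K)$ is a global minimizer with $\dist(0,K)=1$, then the component $\Omega_0$ of $\R^2\setminus K$ containing $0$ contains a point at distance $>2$ from $K$. The paper gets a disk of radius $3$ in $\Omega_0$ from the corkscrew Proposition~\ref{PropDisk}. You only offer a heuristic ending in ``should'', and it misses a case. A \emph{bounded} component of small inradius (say roughly a disk of radius $3/2$) is not thin at small scales, and at large scales it is merely small, so there is no piece of $K$ of length $\sim\rho$ to remove around it. The paper first proves (Proposition~\ref{noiso}) that $\R^2\setminus K$ has no bounded component. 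There, $u$ is a rigid motion, and at a flat, low-energy point of the reduced boundary one erases the crack by gluing to the rigid motion of the other side, contradicting Ahlfors regularity. Unboundedness is then used twice. First, it forces a component trapped in a flat strip to cross the whole ball. Second, it gives $\H^1(\partial\Omega_0\cap B(x_0,3r/4))\ge C^{-1}r$, which is what lets the Carleson selection (Lemma~\ref{LemmaCarleson}) produce a good ball centered on $\partial\Omega_0$, not merely on $K$, at a scale comparable to $r$. Note also that uniform rectifiability does not give ``holes at every scale'' (a line is uniformly rectifiable). What is used is the Carleson bound on $\beta^2+\omega_p$ with $p<2$.

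The obstacle you single out, Korn degenerating in thin domains, which you propose to handle with piecewise Korn and the extension lemmas of \cite{FriedrichLabourie}, is left unresolved, but it can be bypassed entirely. In a ball where $\beta+\omega_p\le\varepsilon$, apply Korn only on the two fat half-disks $D_\pm=B\cap\{\pm x_2>\varepsilon\}$. Extend $u-a_\pm$ from a well-chosen circle by reflection and the $W^{1,p}\to H^1$ extension of \cite[Lemma 22.32]{d}. Then replace $K\cap B_\rho$ by a diameter plus two short arcs, discarding the strip and all the energy in it. This gives $\H^1(K\cap\overline{B_\rho})\le 2\rho+C\varepsilon\rho$ (Lemma~\ref{LemmaDensity}). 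On the other hand, an unbounded component confined to the strip forces the transversal lines, over an interval of length close to the diameter, to meet $\partial\Omega_0$ at least twice. By coarea, $\H^1(\partial\Omega_0\cap B)$ is then close to twice the diameter (Lemma~\ref{LemmaStrip}), a contradiction. Hence $\Omega_0$ fills a half-disk of radius comparable to $r$. Without these ingredients, or a worked-out substitute, your one-step lemma, and with it the proposition, remains unproved.
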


The construction of escape paths originates from the theory of Mumford-Shah minimizers, as developed by Bonnet and David
\cite{bonnetDavid}. Following their approach, the first ingredient of Proposition~\ref{intro1}  is the uniform
rectifiability of the singular set, established in a recent result by the first author \cite{camilleUR}. The second
ingredient is the uniform concentration property, developed in another recent paper \cite{lablem}, which enables the
analysis of blow-up limits.

Building upon the existence of escape paths, we show that the crack can be locally accessed by John domains where the
displacement satisfies Hölder-type estimates.
We refer to Section \ref{sectionJohn} for the definition of a John domain, but we emphasize that this notion provides the appropriate setting for the validity of the Korn inequality.

\begin{theorem}\label{main0}
    There exists $\kappa \geq 3$, $\tau > 0$, $n_0 \in \N$ (which depend only on $\A$) such that the following holds.
    Let $(u,K)$ be a topological almost-minimizer for the Griffith energy with gauge $h$ in $\Omega$.
    Let $x_0 \in K$ and $r_0 > 0$ be such that
    \begin{equation*}
        B(x_0,\kappa r_0) \subset \Omega \quad \text{and} \quad h(\kappa r_0) \leq \tau.
    \end{equation*}
    Then   for all $r \in (0,r_0]$,
    there exists a finite covering $(U_i)_i$ of $B(x_0,r)\setminus K$ with at most $n_0$ connected open subsets of $B(x_0,3r) \setminus K$, i.e.  such that
    \begin{equation}
        B(x_0,r)\setminus K\subset \bigcup_i U_i\subset B(x_0,3r)\setminus K,
    \end{equation}
    and each $U_i$ is a John domain with center $x_i\in \partial B(x_0,2r)$ and with a John constant depending only on $\A$. Moreover, 
    \begin{equation*}
        |u(x)-u(y)|\leq Cr^{1/2},\quad \text{for all} \; x,y\in U_i,\quad \text{and for all} \; i,
    \end{equation*}
    where the constant $C>0$  may depend on $\bf A$, $r_0$ and on  $u$ itself, but does not depend on $r$.
\end{theorem}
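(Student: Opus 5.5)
The plan is to build the John domains from the escape paths of Proposition~\ref{intro1}, and to obtain the Hölder estimate from a Korn inequality on John domains together with the standard energy upper bound.

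\textbf{Step 1: escape paths from every point.} Take $\kappa$ large, say $\kappa = 10$ or larger, and take $\tau$ as in Proposition~\ref{intro1}. For $x\in B(x_0,3r)\setminus K$ we have $B(x,2r)\subset B(x_0,\kappa r_0)\subset \Omega$ and $h(2r)\le h(\kappa r_0)\le\tau$, using monotonicity of the gauge. So Proposition~\ref{intro1} gives a 1-Lipschitz curve $\gamma_x$ from $x$ with $\dist(\gamma_x(t),K)\ge \alpha_J t$. We follow $\gamma_x$ until it first reaches the circle $\partial B(x_0,2r)$. This happens at some time $t_x\in[r,5r]$ when $x\in B(x_0,r)$, since $x_0\in K$ forces $\gamma_x$ to leave small balls around $x_0$. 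The endpoint $y_x$ satisfies $\dist(y_x,K)\ge \alpha_J r$.

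\textbf{Step 2: finitely many centers.} Points of $\partial B(x_0,2r)$ at distance at least $\alpha_J r$ from $K$ can be covered by at most $n_0\sim 1/\alpha_J$ balls $B(x_i,\alpha_J r/2)$, with $x_i\in\partial B(x_0,2r)$ and each such ball disjoint from $K$. We define $U_i$ as the union, over those $x\in B(x_0,r)\setminus K$ whose endpoint $y_x$ lies in $B(x_i,\alpha_J r/2)$, of the carrot-shaped sets
\[
\bigcup_{t\le t_x} B\bigl(\gamma_x(t),c\,\alpha_J t\bigr),
\]
together with the ball $B(x_i,\alpha_J r/2)$.

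With $c$ small, each carrot stays in $B(x_0,3r)\setminus K$. Each $U_i$ is open and connected, since every carrot is connected and meets the common ball. Each $U_i$ is John with center $x_i$: from any point, follow the carrot axis and then a segment inside the ball. A point $z$ in $B(\gamma_x(t),c\alpha_J t)$ first moves to $\gamma_x(t)$ and then continues along the axis, and the distance to $\partial U_i$ stays comparable to the arclength. The John constant depends only on $\alpha_J$, hence only on $\A$. By construction $B(x_0,r)\setminus K\subset\bigcup_i U_i$. I expect the main technical obstacle to be checking the John condition carefully, in particular when paths start near the boundary of a carrot rather than on its axis.

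\textbf{Step 3: Hölder estimate.} First I would prove an energy bound $\int_{B(x,\rho)\setminus K}|e(u)|^2\le C\rho$ for balls $B(x,\rho)\subset B(x_0,\kappa r_0)$. This is the standard comparison with $K\cup\partial B$ for topological almost-minimizers, and it is presumably stated in the preliminaries.

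On a John domain $U$ of diameter $\sim r$, I would apply Korn's inequality on John domains. This gives an affine rigid motion $a(x)=Ax+b$, with $A$ skew, such that $\|\nabla(u-a)\|_{L^2(U)}\le C\|e(u)\|_{L^2(U)}\le Cr^{1/2}$. To get an oscillation bound rather than an $L^2$ bound, I would apply the same estimate on every Whitney-type ball along the John paths, at all scales $\rho$. Since $\|e(u)\|_{L^2(B_\rho)}\le C\rho^{1/2}$, a chain of balls of radii $\sim\alpha_J t$ along the John curve gives a telescoping sum $\sum_k (2^{-k}r)^{1/2}\lesssim r^{1/2}$ for the difference between $u-a$ at a point and its average near the center. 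Interior regularity of $u$, which solves the Lamé system away from $K$, controls the pointwise values.

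The skew part must also be controlled. Here $|A|$ is bounded by the averaged $\nabla u$ on the central ball $B(x_i,\alpha_J r/2)$, and $|A|r\le C r^{1/2}$ would require $|A|\lesssim r^{-1/2}$. This follows from interior estimates: $|\nabla u|\lesssim (\alpha_J r)^{-1}\|\nabla u - \text{skew}\|\dots$ only modulo rigid motions. So instead I would track the rigid motions along the chain. The difference of consecutive skew matrices on balls of size $\rho$ is $\lesssim \rho^{-1/2}$. Summing from scale $r_0$ down to $r$ gives $|A|\lesssim C(u,r_0)\,r^{-1/2}$, where the constant depends on $\|\nabla u\|$ at scale $r_0$. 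This explains why $C$ depends on $u$ and $r_0$. It then yields $|u(x)-u(y)|\le Cr^{1/2}$ for all $x,y\in U_i$.
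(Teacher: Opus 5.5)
Your route is the paper's. You stop the escape paths of Proposition~\ref{intro1} at their first hit of $\partial B(x_0,2r)$ and take a net $\{x_i\}$ of $O(\alpha_J^{-1})$ points on that circle at distance $\gtrsim\alpha_J r$ from $K$. You build $U_i$ from balls along escape paths ending near $x_i$, and you get the oscillation bound by chaining infinitesimal rotations along John curves, as in Lemmas~\ref{LemmaMovRig}--\ref{LemmaTwoCurves}. Three steps, however, do not work as written.

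\textbf{The carrot misses its apex.} The carrot $\bigcup_{0<t\le t_x}B(\gamma_x(t),c\alpha_J t)$ need not contain $x$. Its radius tends to $0$, while a $1$-Lipschitz path may leave $x$ at unit speed, giving $|\gamma_x(t)-x|=t>c\alpha_J t$ for all $t>0$. So $B(x_0,r)\setminus K\subset\bigcup_i U_i$ does not hold ``by construction''. Use radii $c\,\dist(\gamma_x(t),K)$ instead. These are still $\ge c\alpha_J t$, equal $c\,\dist(x,K)>0$ at $t=0$, and are $\le 2cr$ since $x_0\in K$. The John condition near the apex is then the argument of Lemma~\ref{LemmaJohnSegment}(2), which is why the paper uses balls $B(x,\dist(x,K))$.

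\textbf{The path length is too short.} Your requirement $B(x,2r)\subset\Omega$ suggests $\ell=2r$. That only gives $\dist(\gamma_x(2r),K)\ge 2\alpha_J r$ and does not force $\gamma_x$ to reach $\partial B(x_0,2r)$. You need $\ell=2r/\alpha_J$, hence $\kappa\ge 1+2\alpha_J^{-1}$ as in the paper. The hitting time is then bounded by $2r/\alpha_J$, not $5r$.

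\textbf{Elliptic regularity is not available.} $u$ need not solve the Lam\'e system off $K$. The additive error $h(\rho)\rho$ in Definition~\ref{defAlmostMin} yields no Euler--Lagrange equation (cf.\ the remark opening Section~\ref{SubsectionFarSing}). Pointwise values must come from the Morrey bound $\int_{B_\rho}|\nabla u-A_0|^2\le C\rho$ and Campanato's criterion (Lemma~\ref{morraycontrol1}, Corollary~\ref{cor_morrey}). Your telescoping supplies this, but only once you know $|A_{B_\rho}-A_0|\lesssim\rho^{-1/2}$ on every Whitney ball (Lemma~\ref{LemmaMovRig3}). A single Korn rigid motion on $U$ only gives $\int_{B_\rho}|\nabla(u-a)|^2\lesssim r$, not $\lesssim\rho$. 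So the rotation chaining you introduce later for $|A|$ is already needed in the oscillation step.

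Two of your choices are more careful than the paper's written proof.

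\emph{Bounding the central rotation.} Your final step is genuinely needed. The paper stops at \eqref{eqeq19} without bounding $|A_{B(x_i,r_i)}|$, yet $r^{1/2}|A_{B(x_i,r_i)}|\le C(u,r_0)$ is exactly what makes $C$ independent of $r$, and Corollary~\ref{main3} relies on that. To make it precise:
\begin{itemize}
\item run an escape path from $x_i$ of length comparable to $r_0$ (Proposition~\ref{intro1});
\item sum $\rho^{-1/2}$ over its chain of balls from scale $\alpha_J r$ up to scale $\alpha_J r_0$;
\item bound the rotation on the top ball, uniformly in $r$ and $i$, by $\|\nabla u\|_{L^2}$ on a fixed compact subset of $\Omega\setminus K$.
\end{itemize}
This is where $\kappa>3$ and the dependence on $u$ and $r_0$ enter.

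\emph{Keeping the whole path and the central ball.} Keeping balls along the whole path together with $B(x_i,\alpha_J r/2)$ is the right construction. The paper's $U_i=\bigcup_{x\in\Gamma_i}B(x,\dist(x,K))$ with $\Gamma_i\subset B(x_0,r)$ lies in $B(x_0,2r)$, so it does not contain its own center $x_i$. Its claim that every point of an escape path belongs to $\Gamma_i$ also overlooks the constraint $\Gamma_i\subset B(x_0,r)$.
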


We then apply our result to derive local $L^\infty$ and $H^1$ estimates for the displacement field.

\begin{theorem}\label{main1}
    Let $\Omega \subset \R^2$ be open and let $(u,K)$ be a topological almost-minimizer for the Griffith energy in $\Omega$. Then
    for any open set $U$ such that $\overline{U}\subset \Omega$ we have that $u\in L^{\infty}(U)\cap H^1 (U\setminus
    K)$. In particular, $u\in SBV^2(U)$.
\end{theorem}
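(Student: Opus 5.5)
We deduce Theorem \ref{main1} from Theorem \ref{main0} by a covering argument, treating points away from the crack and points near the crack separately.

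\textbf{Step 1: choice of scale and covering.} Since the gauge satisfies $h(t)\to 0$ as $t\to 0$, we can fix $r_0>0$ small enough that $\kappa r_0<\dist(\overline U,\R^2\setminus\Omega)$ and $h(\kappa r_0)\le\tau$. We then cover $\overline U$ by finitely many balls of two types:
\begin{itemize}
\item balls $B(x_0,r_0)$ with $x_0\in K\cap\overline U$, to which Theorem \ref{main0} applies;
\item balls $B(y,\rho)$ with $B(y,2\rho)\subset\Omega\setminus K$, where $u$ is smooth by elliptic regularity of the Lamé-type system $\operatorname{div}(\A e(u))=0$ outside the closed crack.
\end{itemize}
The first family covers a neighbourhood of $K\cap\overline U$. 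The remaining compact part of $\overline U$ lies at positive distance from $K$, so compactness gives a finite subcover. On the second type of balls, $u$ and $\nabla u$ are bounded, and nothing more is needed.

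\textbf{Step 2: $L^\infty$ bound near the crack.} Fix $x_0\in K$ and apply Theorem \ref{main0} with $r=r_0$. This gives at most $n_0$ John domains $U_i\subset B(x_0,3r_0)\setminus K$ covering $B(x_0,r_0)\setminus K$, with oscillation of $u$ on each $U_i$ at most $Cr_0^{1/2}$. Each center $x_i$ lies in $\Omega\setminus K$, where $u$ is finite, so $|u|\le |u(x_i)|+Cr_0^{1/2}$ on $U_i$. Since there are finitely many domains and finitely many balls, $u\in L^\infty(U)$.

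\textbf{Step 3: $H^1$ bound.} This is the main point. The energy bound only controls $e(u)$, not $\nabla u$, so we need Korn's inequality. On a John domain $D$ with a fixed John constant, Korn's inequality holds in the form
\[
\inf_{a\ \mathrm{skew}} \|\nabla u - a\|_{L^2(D)} \le C_K \|e(u)\|_{L^2(D)}.
\]
The constant $C_K$ depends only on the John constant, which depends only on $\A$ and is scale-invariant. On each $U_i$ the right-hand side is finite, because almost-minimality gives $\int_{B(x_0,3r_0)\setminus K}|e(u)|^2\le C r_0$ by comparison with a competitor that cuts along $\partial B$. Hence $\nabla u - a_i\in L^2(U_i)$ for some constant skew matrix $a_i$. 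Since $U_i$ is bounded, the constant $a_i$ lies in $L^2(U_i)$, so $\nabla u\in L^2(U_i)$.

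Two justifications are needed to apply Korn here:
\begin{itemize}
\item Korn's inequality must be valid for $u\in H^1_{loc}(U_i)$ with $e(u)\in L^2(U_i)$, without assuming $u\in H^1(U_i)$ a priori. This can be handled by exhausting $U_i$ by smooth subdomains that are uniformly John, or by citing the John-domain Korn inequality for $L^2$ displacements with $L^2$ symmetric gradient (Acosta–Durán–Muschietti type).
\item The John property must hold uniformly. The dependence of $C$ on $u$ in Theorem \ref{main0} is harmless, since we only need finiteness.
\end{itemize}
Summing over the finite cover gives $u\in H^1(U\setminus K)$.

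\textbf{Step 4: $SBV^2$.} We have $u\in L^\infty(U)$ with $\nabla u\in L^2(U\setminus K)$, and $\H^1(K\cap U)<\infty$ by the energy bound. Since $K$ is relatively closed, $u$ is $W^{1,1}$ off a closed $\H^1$-finite set, and the standard result (e.g.\ Ambrosio–Fusco–Pallara, Proposition 4.4) gives $u\in SBV(U)$ with $J_u\subset K$ and absolutely continuous part $\nabla u$. Hence $u\in SBV^2(U)$; boundedness of $u$ makes the jump part integrable against $\H^1\llcorner K$.
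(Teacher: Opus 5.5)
Your Steps 2--4 follow the paper's argument near the crack: Theorem~\ref{main0} for the oscillation bound, Proposition~\ref{kornJohn} on the John domains $U_i$, a compactness covering, and then Ambrosio--Fusco--Pallara for $SBV^2$.

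The gap is in Step~1, on the balls $B(y,\rho)$ away from $K$. For a topological almost-minimizer with gauge $h$, $u$ does \emph{not} solve $\operatorname{div}(\A e(u))=0$ in $\Omega\setminus K$. Comparing with $u+t\varphi$, $\varphi\in C^\infty_c(B(x,r))$, only yields $0\le 2t\int\A e(u):e(\varphi)+t^2\int\A e(\varphi):e(\varphi)+h(r)r$. The additive error $h(r)r$ does not vanish as $t\to0$, so there is no Euler--Lagrange equation and no elliptic regularity to invoke. The paper makes this remark at the start of Section~\ref{SubsectionFarSing}. In fact $u$ need not be smooth and $\nabla u$ need not be bounded. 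Take $K=\emptyset$ and $u(x)=(|x|^{s},0)$ with $s\in(1/2,1)$. Every ball satisfies $\int_{B(x,r)}\A e(u):e(u)\le Cr^{2s}$, so $(u,\emptyset)$ is an almost-minimizer with gauge $h(r)=Cr^{2s-1}$. Yet $|\nabla u|=s|x|^{s-1}$ blows up at the origin.

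On those balls the $H^1$ part is actually free, since $\overline{B(y,\rho)}\subset\Omega\setminus K$ and $u\in H^1_{\mathrm{loc}}(\Omega\setminus K)$. What is not free is the $L^\infty$ bound. $H^1$ does not embed into $L^\infty$ in the plane, and Theorem~\ref{main0} only applies around points of $K$; if $K\cap\overline U=\emptyset$ it gives nothing at all.

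The missing ingredient is the interior Morrey--Campanato estimate, Proposition~\ref{reginterieure}. It runs as follows:
\begin{enumerate}
\item Take $\rho$ with $h(\rho)<+\infty$. The upper bound \eqref{eq_AR2} then gives $\int_{B(x,r')}|e(u)|^2\le C(1+h(\rho))r'$ for every ball $B(x,r')\subset B(y,\rho)$.
\item Korn's inequality on balls upgrades this to $\int_{B(x,r')}|\nabla u-A_0|^2\le Cr'$ with a single skew matrix $A_0$ (Lemma~\ref{morraycontrol1}).
\item The chain-of-balls argument of Lemma~\ref{LemmaTwoCurves}, applied to the ball viewed as a John domain, bounds the oscillation of $u$ on $B(y,\rho)$ by $C\rho^{1/2}(1+\rho^{1/2}|A_0|)$.
\end{enumerate}
With this argument in place of the elliptic-regularity claim, your proof coincides with the paper's.
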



This can be compared with Friedrich's theorem \cite[Theorem~2.1]{FriedrichPiecewise}, which states that in dimension
$N=2$,
any function in $GSBD^2$ lie in the space $SBV^p$ (for all $p < 2$)
after substracting a piecewise infinitesimal rigid motion.
The proof in \cite{FriedrichPiecewise} relies on the piecewise Korn inequality, which uses a general
decomposition result of a Lipschitz domain into open subsets that are John domains with uniform constants
\cite{friedrichJohn}. Because this decomposition is highly general, the resulting argument is necessarily technically
involved.

The key point of Theorem~\ref{main1} is that it applies specifically to almost-minimizing pairs $(u,K)$, enabling us to
derive regularity properties from minimality in a self-contained manner that is independent from \cite{FriedrichPiecewise,friedrichJohn}.
%

As a byproduct of our analysis, we also obtain a partial result toward the Griffith analogue of the main theorem in
\cite{labourieLemDegiorgi}, addressing a conjecture of De Giorgi. More precisely, we show that the displacement $u$
admits only finitely many traces as it approaches the singular set $K$.

\begin{corollary}\label{main3}
    There exists a constant $n_0\in \N$, depending only on $\A$, such that the following holds. Let $(u,K)$ be a topological almost-minimizer for
    the Griffith energy in $\Omega\subset \R^2$ and let $x_0\in K$. Let $E(x_0)$ be the set of values $a \in \R^2$ for which there
    exists a sequence $(y_k)_k$ in $\Omega \setminus K$ such that $y_k \to x_0$ and $u(y_k) \to a$.
    Then the cardinality of $E(x_0)$ is at most $n_0$.
\end{corollary}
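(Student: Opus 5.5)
We derive the corollary from Theorem~\ref{main0}, applied at $x_0$ with smaller and smaller radii. First we choose $r_0>0$ so that $B(x_0,\kappa r_0)\subset\Omega$ and $h(\kappa r_0)\le\tau$; this is possible since $x_0$ is an interior point of $\Omega$ and the gauge satisfies $h(r)\to0$ as $r\to0$. Suppose, for contradiction, that $E(x_0)$ contains $n_0+1$ distinct values $a_0,\dots,a_{n_0}$, and set
\[
\delta:=\min_{j\neq k}|a_j-a_k|>0 .
\]
For each $j$ we fix a sequence $(y^j_k)_k$ in $\Omega\setminus K$ with $y^j_k\to x_0$ and $u(y^j_k)\to a_j$.

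Next we fix a small radius $r\in(0,r_0]$, to be specified, and apply Theorem~\ref{main0}. This gives at most $n_0$ John domains $U_i$ covering $B(x_0,r)\setminus K$, each with
\[
\operatorname{osc}_{U_i}u\le Cr^{1/2},
\]
where $C$ depends on $\A$, $r_0$ and $u$ but not on $r$. We choose $r$ so small that $Cr^{1/2}<\delta/3$.

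For $k$ large, all the points $y^j_k$ lie in $B(x_0,r)\setminus K$, and $|u(y^j_k)-a_j|<\delta/3$ for every $j$. Each point $y^j_k$ belongs to some $U_i$, and there are $n_0+1$ indices $j$ but at most $n_0$ domains. By pigeonhole, two points $y^j_k$ and $y^{j'}_k$ with $j\neq j'$ lie in the same $U_i$. Then
\[
|a_j-a_{j'}|\le |a_j-u(y^j_k)|+|u(y^j_k)-u(y^{j'}_k)|+|u(y^{j'}_k)-a_{j'}|<\tfrac{\delta}{3}+\tfrac{\delta}{3}+\tfrac{\delta}{3}=\delta,
\]
which contradicts the choice of $\delta$. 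Hence $\operatorname{card}(E(x_0))\le n_0$.

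The only delicate point is that $C$ must not depend on $r$, so that the oscillation bound $Cr^{1/2}$ tends to zero with $r$. Theorem~\ref{main0} states exactly this. Note also that the pigeonhole step is applied at a single large $k$, after $r$ has been fixed, so no uniformity in $k$ is required.
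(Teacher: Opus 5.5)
Your proof is correct and follows essentially the paper's argument: fix $r$ so that the oscillation bound $Cr^{1/2}$ from Theorem~\ref{main0} is small compared with the minimal separation $\delta$, and use that $B(x_0,r)\setminus K$ is covered by at most $n_0$ sets $U_i$. The only cosmetic difference is that you apply the pigeonhole principle to the $n_0+1$ points $y^j_k$ at a single large $k$, whereas the paper extracts subsequences lying in one $U_i$ and shows that the limit values coming from each $U_i$ lie in a small ball.
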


The paper is organized as follows. In Section~3, we establish the existence of escape paths for Griffith almost-minimizers. Section~4 is devoted to Morrey--Campanato type estimates and their applications; in particular, Theorem~\ref{main0} is proved in Section~4.3 and Corollary~\ref{main3} in Section~4.4. Finally, Section~5 contains the proof of Theorem~\ref{main1}.

\section{Preliminaries and notation}\label{preliminaries}

Although the following definitions and preliminaries hold in any dimension $N \geq 2$, we present them in $\R^2$ to
align with the two-dimensional setting, where the main results are obtained. 

\subsection{Linear elasticity setting and notations}

In the sequel, $\Omega \subset \R^2$ denotes a fixed open set.
We regard $\Omega$ as the reference configuration of a homogeneous, isotropic, linearly elastic body.
We fix a fourth-order elastic tensor $\A$ such that there exists a constant $c > 0$ with
\begin{equation}
    \A(\xi - \xi^T) = 0 \quad \text{and} \quad \A \xi : \xi \geq c \abs{\xi + \xi^T}^2
    \quad \text{for all $\xi \in \R^{2 \times 2}$}.
\end{equation}
A standard example is the Lamé tensor, defined by
\begin{equation}
    \A \xi=\lambda ({\rm tr}\xi) I+2 \mu \xi, \quad \forall\xi \in \mathbb M^{2 \times 2}_{\rm sym},
\end{equation}
where $\lambda$ and $\mu$ are the Lam\'e coefficients satisfying $\mu>0$ and $\lambda+\mu>0$. 
We denote by
\begin{equation}
    e(u)=\frac{\D u+\D u^T}{2}
\end{equation}
the \emph{symmetric gradient} of $u$. We call \emph{rigid motion} affine maps $a \colon \R^2 \longrightarrow \R^2$ of
the form $a(x) = Ax+b$, where $b \in \R^2$ and $A \in \R^{2 \times 2}$ is a skew-symmetric matrix.

We follow the convention of letting $\N$ denote the set of nonnegative integers $\{0, 1, 2, ...\}$.
For $x_0\in\R^2$ and $r>0$, we let $B(x_0,r)$ represent the open disk with center $x_0$ and radius $r>0$.
If the disk is centered at the origine, we simply write $B_r$.
For $M \geq 0$, the notation $M B(x_0,r)$ refers to the disk $B(x_0,M r)$. 
Given a curve $\gamma : I \longrightarrow \R^2$ defined on an interval $I$, we denote by $\spt(\gamma)$ the set $\{\gamma(t) \mid t \in I\}$.
We use the letter $C$ to denote a generic constant $\geq 1$ which depends on $\A$ and may possibly vary from line to line.

\subsection{Competitors}

\begin{definition}\strut
    \begin{enumerate}[wide,label=(\alph*)]
        \item An \emph{admissible pair} $(u,K)$ consists of a relatively closed subset $K\subset\Omega$ and a vector
            field $u \in H^1_{\mathrm{loc}}(\Omega \setminus K)$.

        \item A relatively closed subset $K \subset \Omega$ is called \emph{coral} if
            \begin{equation}
                \H^1(K \cap B(x,r))>0, \quad\forall x \in K,\,\forall r>0.
            \end{equation}
            We also say that a pair $(u,K)$ is \emph{coral} when $K$ is coral.

        \item Let $(u,K)$ be an admissible pair.
            Let $x \in \Omega$ and $r > 0$ be such that $\overline{B}(x,r) \subset \Omega$.
            A \emph{competitor} of $(u,K)$ in $B(x,r)$ is an admissible pair $(v,L)$ such that
            \begin{equation}
                L \setminus B(x,r) = K \setminus B(x,r) \quad \text{and} \quad v = u \quad \text{a.e. in } \Omega \setminus \left(K \cup B(x,r)\right).
            \end{equation}
            A competitor $(v,L)$ of $(u,K)$ in $B(x,r)$ is called \emph{topological} if $L \setminus B(x,r) = K \setminus B(x,r)$ and
            \begin{gather}\label{eq_topo_competitor}
                \text{all points $x,y \in \Omega \setminus (K \cup B)$ that are separated by $K$}\\
                \text{are also separated by $L$}.
            \end{gather}
            In other words, if $x,y \in \Omega \setminus (K \cup B)$ belong to different connected components of $\Omega \setminus K$, they also belong to different connected components of $\Omega \setminus L$.
    \end{enumerate}
\end{definition}

\subsection{Local minimizers}

A \emph{gauge} is a non-decreasing function $h \colon (0,+\infty) \longrightarrow [0,+\infty]$ satisfying $\lim_{r\to 0} h(r)=0$.
\begin{definition}[Almost-minimizers]\label{defAlmostMin}
    Let $h$ be a gauge and $\Omega \subset \R^2$ be open. An \emph{almost-minimizer} of the Griffith functional with
    gauge $h$ in $\Omega$ is a coral pair $(u,K)$ such that for all $x \in \Omega$ and $r > 0$ with $\overline{B}(x,r)
    \subset \Omega$ and for any competitor $(v,L)$ of $(u,K)$ in $B(x,r)$, we have
    \begin{multline*}
        \int_{B(x,r) \setminus K} \A e(u):e(u) \,dx + \H^1(K \cap B(x,r))
        \\\leq \int_{B(x,r) \setminus L} \A e(v) : e(v) \,dx + \H^1(L \cap B(x,r)) + h(r) r.
    \end{multline*}
    A \emph{topological almost-minimizer} of the Griffith functional with gauge $h$ in $\Omega$ is a pair which satisfies the above definition but only with respect to topological competitors. 
\end{definition} 

We will also consider the notion of \emph{global minimizer}, which arises as blow-up limits of almost minimizers as was proved in \cite{lablem}. Here is the definition.

\begin{definition}[Global minimizers]
    A \emph{global minimizer} is a coral pair $(u,K)$ such that for all $x \in K$, $r > 0$ and for any topological
    competitor $(v,L)$ of $(u,K)$ in $B(x,r)$, we have
    \begin{equation}
        \int_{B(x,r)} \A e(u):e(u) \,dx+ \H^1(K \cap {B(x,r)})
        \leq \int_{B(x,r)} \A e(v) : e(v) \, dx + \H^1(L \cap B(x,r)).
    \end{equation}
\end{definition} 

\begin{remark}Notice that all our results are given on \emph{topological} almost-minimizers, which defines a larger class than that of almost-minimizers, since the admissible competitors are more restricted.  Moreover, since a global minimizer is a topological minimizer, the notion of  topological almost-minimizers provides a convenient framework that includes both almost-minimizers and global minimizers. 
\end{remark}

\begin{remark}[Standard rescaling of almost-minimizers]\label{rmk_scaling}
    If $(u,K)$ is an almost-minimizer with gauge $h$ in a   ball $B(x_0,r_0)$, then the pair $(u_0,K_0)$ defined by
    \begin{equation}
        u_0(x) := r_0^{-1/2} u(x_0 + r_0 x) \quad \text{and} \quad K_0 := r_0^{-1}(K - x_0),
    \end{equation}
    is an almost-minimizer with gauge $\tilde{h}$ in $B_1$, where $\tilde{h} : t \mapsto h(r_0 t)$.
\end{remark}

We recall the following result, which was proved, actually, in dimension $N\geq 2$.

\begin{theorem}[Ahlfors-regularity]\label{ARTheorem}
    There exist two constants $\varepsilon_0 \in (0,1)$ and $C_A \geq 1$ (which depends only on $\A$) such that the following holds.
    Let $(u,K)$ be a topological almost-minimizer in $\Omega$ with gauge $h$.
    Then for all $x \in \Omega \cap K$ and $r > 0$ such that $B(x,r) \subset \Omega$ and $h(r) \leq \varepsilon_0$, we have
    \begin{equation}\label{eq_AR1}
        \H^1(K \cap B(x,r)) \geq C_A^{-1} r.
    \end{equation}
\end{theorem}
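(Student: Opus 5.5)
This is the lower Ahlfors bound for topological almost-minimizers. The plan is to argue by contradiction, following the classical Mumford--Shah argument adapted to the symmetric gradient. We work at a point $x\in K$ and radius $r$. By the coral property the mass $\H^1(K\cap B(x,\rho))$ is positive at every scale. So we assume $\H^1(K\cap B(x,r))\le \varepsilon r$ for a small $\varepsilon$ to be chosen, and aim for a contradiction.

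The first step is a decay estimate. If $K$ has small length in $B(x,r)$, we look for a radius $\rho\in(r/2,r)$ with $\partial B(x,\rho)\cap K=\emptyset$, which exists by coarea once $\varepsilon<1/2$. Then $K\cap B(x,\rho)$ lies in a single component-cavity, and erasing it gives a topological competitor. Indeed, points outside $B(x,\rho)$ are connected in $\Omega\setminus K$ exactly as before, since the circle is crack-free and still joins them.

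On $B(x,\rho)$ we replace $u$ by an extension of its trace on the circle. Here we use a Korn-type extension: subtract a rigid motion $a$ fitted on an annulus, and extend $u-a$ from a thin annulus avoiding $K$. The full construction needs the annulus to be nearly crack-free, which we get by averaging. The resulting competitor has elastic energy at most $C\int_{\text{annulus}\setminus K}\A e(u):e(u)$ and no crack inside.

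Comparing with the almost-minimality inequality gives
\[
\H^1(K\cap B(x,\rho/2))\le C\,\omega(x,r)+h(r)r,
\]
where $\omega$ is the elastic energy near $\partial B$. This alone is not contradictory. So we iterate through a decay lemma: if $K$ is small in $B(x,r)$, then the normalized energy $r^{-1}\int_{B(x,r)}|e(u)|^2$ decays geometrically. This holds because, away from a small crack, $u$ is almost a solution of the Lamé system, whose energy scales like $\rho^2$. The comparison with the elastic harmonic replacement uses exactly the competitor above.

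Iterating down to small scales makes both the normalized energy and the crack density tend to zero. A density argument then contradicts $x\in K$ being a point where $K$ has positive mass at every scale; equivalently, there is an upper density bound $\liminf_\rho \rho^{-1}\H^1(K\cap B(x,\rho))\ge c$ at $\H^1$-a.e. point of the rectifiable part. Covering then gives the result at all $x$ by coral-ness. The main obstacle is the extension step: Korn's inequality fails on domains cut by $K$, so we must select an annulus free of $K$. This is where the two-dimensional coarea argument, and $\varepsilon_0$ bounding $h$, enter.
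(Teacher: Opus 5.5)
The outline (contradiction, a crack-free circle from the coarea inequality, decay lemma, iteration, density argument, coralness) is the De Giorgi--Carriero--Leaci scheme that \cite{CCI} and \cite{CFI} adapt to Griffith, and that \cite{FLK2} extends to topological competitors; the paper only quotes the result from there. The crack-free circle does make erasing $K\cap B(x,\rho)$ a topological competitor. There is, however, a genuine gap: you assert rather than prove the step that carries the whole difficulty. That step claims that after subtracting a rigid motion $a$, the map $u-a$ extends into $B(x,\rho)$ with elastic energy at most $C(\A)\int_{\mathrm{annulus}\setminus K}|e(u)|^2$. It fails for three reasons.
\begin{itemize}
\item Coarea gives a crack-free circle, not a crack-free annulus of width comparable to $r$. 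For example, $N$ short arcs of total length $\varepsilon r$ at radii $r/2+kr/(2N)$ meet every annulus of width much larger than $r/N$. Korn constants of an annulus of width $\delta$ blow up as $\delta/r\to 0$.
\item On the circle alone, $e(u)$ controls only $\tau\cdot\partial_\tau u$, not the $H^{1/2}$ norm of $u-a$ that an extension needs.
\item Nor can $a$ be fitted on the annulus minus $K$. A set of small length can almost enclose a region: take an arc of length about $2\pi\eta$ around a disk of radius $\eta$, with a gap of size $g$. Then $u$ can equal a rigid motion $a_1$ inside and $a_2\neq a_1$ outside, with elastic energy tending to $0$ as $g\to 0$ (use a logarithmic cut-off across the gap). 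So no Korn inequality with constant independent of $K$ holds on $B_r\setminus K$.
\end{itemize}
This is also why the full gradient is not controlled a priori: in the paper, $\nabla u\in L^2$ comes only in Theorem~\ref{main1}, downstream of Ahlfors regularity. Your decay lemma (``almost Lam\'e away from a small crack'') needs exactly this missing control, both for compactness of the rescaled maps and for the comparison competitor.

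The missing idea is a Korn--Poincar\'e inequality with an exceptional set for functions with small jump set (Chambolle--Conti--Francfort), in the form of the approximation result of \cite{CCI}. Roughly: if $\H^1(K\cap B_r)\le\varepsilon r$, there exist $\omega\subset B_r$ with $\H^1(\partial\omega)\le C\H^1(K\cap B_r)$ and a Sobolev map $w$ on a slightly smaller ball, with $w=u$ off $\omega$ and $\int|e(w)|^2\le C\int|e(u)|^2$. The competitor is then built from $w$, and $\partial\omega$ is added to the crack at a cost the decay estimate absorbs; adding crack is harmless for topological competitors.

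Your final step also needs repair, since vanishing density does not contradict $\H^1(K\cap B(x,\rho))>0$ at every scale. Run the iteration at points $y\in K\cap B(x,r/2)$, to which smallness at $(x,r)$ passes at scale $r/2$. Then use that $\limsup_{\rho\to 0}(2\rho)^{-1}\H^1(K\cap B(y,\rho))\geq 1/2$ for $\H^1$-a.e.\ $y\in K$. This holds for any set of locally finite $\H^1$-measure, which \eqref{eq_AR2} provides. Rectifiability of $K$ is not available here, since the paper obtains it from \cite{camilleUR}, which builds on Ahlfors regularity. You then get $\H^1(K\cap B(x,r/2))=0$, contradicting coralness. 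Finally, the iteration propagates smallness of $\rho^{-1}\bigl(\int_{B(y,\rho)}|e(u)|^2+\H^1(K\cap B(y,\rho))\bigr)$, not of crack density alone. So you need the upper bound \eqref{eq_AR2} and a two-step argument, with $\varepsilon_0$ chosen to absorb the gauge error $h(\rho)\rho$ at each step.
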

For details, we refer to \cite{FLK2} which generalizes the method of \cite{CCI}, \cite{CFI} to topological quasiminimizers.
Up to choosing $C$ a bit larger (still depending only on $\A$), it is standard that for all $x \in \Omega$ and $r > 0$
such that $B(x,r) \subset \Omega$, we also have the upper bound
\begin{equation}\label{eq_AR2}
    \int_{B(x,r)\setminus K} |e(u)|^2 \, dx + \H^1(K \cap B(x,r)) \leq C(1 + h(r)) r.
\end{equation}

\subsection{Normalized quantities}

We introduce a notation that will be used throughout the paper. Let $(u,K)$ be a Griffith almost-minimizer. For $x_0 \in K$
and $r_0 > 0$ such that $B(x_0,r_0) \subset \Omega$, \emph{the bilateral flatness} $\beta(x_0,r_0)$ of $K$ in
$B(x_0,r_0)$ is defined as
\begin{equation}
    \beta_K(x_0,r_0) := r^{-1} \inf_{P} \max\biggl(\sup_{x \in K \cap B(x_0,r_0)} \dist(x,P), \sup_{x \in P \cap
    B(x_0,r_0)} \dist(x,K)\biggr),
\end{equation}
where $P$ ranges among affine hyperplanes passing through $x_0$ (the infimum is always attained by compactness of the
Grassmann space). 
To make the notation lighter, we drop the dependence on $K$ in the definition of the flatness.

We also define the normalized energy in the ball $B(x_0,r_0)$ as
\begin{equation}
    \omega(x_0,r_0)=r^{-1} \int_{B(x_0,r_0)\setminus K} |e(u)|^2 \;dx,
\end{equation}
and the $p$-normalized elastic energy as
\begin{equation}
    \omega_p(x_0,r_0)=r^{1-\frac{4}{p}}\biggl(\int_{B(x_0,r_0)\setminus K} |e(u)|^p \;dx\biggr)^\frac{2}{p}.
\end{equation}

Both quantities $\beta(x_0,r_0)$ and $\omega_p(x_0,r_0)$ are invariant under the standard rescaling given in
Remark~\ref{rmk_scaling}. Notice that for $t \in (0,1)$, one trivially has the scaling inequalities
\begin{equation}\label{eq_scaling}
    \beta(x_0,tr_0) \leq t^{-1} \beta(x_0,r_0) \quad \text{and} \quad \omega_p(x_0,t r_0) \leq t^{1-4/p} \omega_p(x_0,r_0).
\end{equation}
We will see in Lemma~\ref{LemmaCarleson} that $\beta$ and $\omega_p$ are often small, as a consequence of
energy estimates and uniform rectifiability of the crack set.

\subsection{John Domains} \label{sectionJohn}

In this subsection, we additionally assume that the open set $\Omega\subset\R^2$ is bounded.

\begin{definition}[John domains]
    We say that a bounded open set $\Omega \subset \R^2$ is a John domain with center $x_0 \in \Omega$ if there exists
    a constant $\alpha_J \in (0,1)$ such that for every $x\in\Omega$ there exists a $1$-Lipschitz 
    curve $\gamma \colon [0,\ell] \longrightarrow \Omega$ such that $\gamma(0)=x$, $\gamma(\ell)=x_0$ and 
    \begin{equation}\label{JohnCondition}
        \dist(\gamma(t),\Omega^c)\geq \alpha_J t,\quad \forall t \in [0,\ell].
    \end{equation}
    The curve $\gamma$ is called a  John curve from $x$ to $x_0$.
\end{definition}
John domains may have corners, provided that the angles are not too small, but they cannot have outward cusps. Typical
examples of John domains include Lipschitz and snowflake domains. Notice that the above definition is scale invariant
(the John constant $\alpha_J$ is invariant under homotheties of the domain).

The arc-length reparametrization of a $1$-Lipschitz curve satisfying \eqref{JohnCondition} still satisfies
\eqref{JohnCondition} with the same constant $\alpha_J$. Thus one may always assume for free that a John curve is arc-length parametrized.

\begin{remark}\label{boundl}
    Let $\gamma : [0,\ell] \longrightarrow \Gamma$ be a John curve connecting a point $x$
    to a point $x_0$. As $\gamma$ is $1$-Lipschitz, one has $\abs{x - x_0} \leq \ell$ and therefore
    \begin{equation*}
        \dist(x_0,\Omega^c) = \dist(\gamma(\ell),\Omega^c) \geq \alpha_J \ell \geq \alpha_J \abs{x - x_0}.
    \end{equation*}
    One deduces that the center $x_0$ of a John domain must always be relatively far
    from the boundary:
    \begin{equation}
        \dist(x_0,\Omega^c) \geq \alpha_J \sup_{x \in \Omega} \abs{x - x_0}
        \geq \Bigl(\frac{\alpha_J}{2}\Bigr) \mathrm{diam}(\Omega).
    \end{equation}
    One can see similarly that the length $\ell$ of a John curve is also always bounded from above by the diameter of
    the domain. Indeed,
    \begin{equation}
        \alpha_J \ell \leq \dist(x_0,\Omega^c)\leq {\rm diam}(\Omega),
    \end{equation}
    so $\ell \leq {\rm diam}(\Omega)\alpha_J^{-1}$. 
\end{remark}


\begin{remark}\label{boundaryJ}
    If $\Omega$ is a John domain with center $x_0$, then there exists a John curve starting not only from every point
    of $\Omega$ but also from every point of $\dd\Omega$. Indeed, let $x\in\dd\Omega$. There exists a sequence
    $(x_n)_{n\in\N}\subset\Omega$ such that $x_n \to x$. For each $n$, let $\gamma_n : [0,\ell_n] \longrightarrow
    \Omega$ be a John curve joining $x_n$ to $x_0$. By Remark~\ref{boundl}, and extending each $\gamma_n$ constantly
    on $[\ell_n,{\rm diam}(\Omega)\alpha_J^{-1}]$, we may regard each $\gamma_n$ as defined on the same segment
    $I:=[0,{\rm diam}(\Omega)\alpha_J^{-1}]$. Since the curves $\gamma_n$ are all $1$-Lipschitz, Arzel\`a-Ascoli's
    theorem yields that, up to a subsequence, $(\gamma_n)_n$ converges uniformly to a $1$-Lipschitz curve $\gamma$ on $I$.
    We can also assume that $\ell_n\to \ell \in I$ by compactness of $I$. Then, by uniform convergence we have
    $\gamma(0)=x$ and $\gamma(\ell)=x_0$. Writing \eqref{JohnCondition} for each $\gamma_n$ and passing to the limit,
    we also see that $\gamma$ satisfies \eqref{JohnCondition}.
\end{remark}

Later on we will need a slightly more general definition of \emph{escape paths}.

\begin{definition}[Escape path] 
    Let $G\subset \R^2$ be a set.
    We say that  a  $1$-Lipschitz curve
    $\gamma:[0,\ell] \longrightarrow \R^2 \setminus G$ is an escape path from $x:=\gamma(0)$ to $y:=\gamma(\ell)$ with constant $\alpha_J \in (0,1)$ if it satisfies
    \begin{equation}\label{JohnConditionK}
        \dist(\gamma(t),G)\geq \alpha_J t,\quad \forall t \in [0,\ell].
    \end{equation}
\end{definition}

\begin{remark} If $\Omega$ is a John domain then any John curve is an escape path with $G=\Omega^c$. Also, later on we will prove the existence of escape paths in the context of Griffith minimizers, for which in this case the definition will apply with $G=K$. 
\end{remark}


The following technical tool will be needed later. It says that from an escape path one can construct a chain of balls
with nice summability properties.

\begin{proposition}
    \label{Chain}
    Let $\Omega\subset\R^2$ be an open set, $x,x_0\in\Omega$ and let $\gamma \colon [0,\ell] \longrightarrow \Omega$ be
    a John curve from $x$ to $x_0$ satisfying \eqref{JohnCondition}. Then there exist $M>1$, $q \in (0,1)$ (which only
    depend on $\alpha_J$) and a sequence of balls $(D_i)_{i\in\N} \subset\Omega$ such that the following properties hold:
    \begin{enumerate}[label=\roman*)]
        \item\label{Chain1} $D_0=B(x_0,\alpha_J \ell/4)$;
        \item\label{Chain2} for all $i \in \N$, $\mathrm{diam}(D_{i+1})=q \mathrm{diam}(D_i)$ and
            \begin{equation}\label{eqeq13}
                |x_i-x|\leq q^i\ell;
            \end{equation}
        \item\label{Chain4} for every $i\in\N$ there exists a ball  $V_i\subset D_i\cap D_{i+1}$ such that $D_i\cup D_{i+1}\subset MV_i \subset \Omega$. 
    \end{enumerate}
\end{proposition}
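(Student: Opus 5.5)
We place the balls along the curve, walking backwards from $x_0$ towards $x$ along geometrically decreasing arclength parameters. Parametrize $\gamma$ by $s = \ell - t$ measured from $x_0$; equivalently, use $t_i := q^i \ell$ as parameter values measured from $x=\gamma(0)$, and set $x_i := \gamma(t_i)$. Then $x_0 = \gamma(\ell)$ is consistent with the statement, and since $\gamma$ is $1$-Lipschitz with $\gamma(0)=x$, we immediately get $|x_i - x| \le t_i = q^i\ell$, which is \eqref{eqeq13}. We define $D_i := B(x_i, \alpha_J t_i/4)$. Then $D_0 = B(x_0,\alpha_J\ell/4)$ and $\mathrm{diam}(D_{i+1}) = q\,\mathrm{diam}(D_i)$, which gives \ref{Chain1} and \ref{Chain2}. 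By \eqref{JohnCondition}, $\dist(x_i,\Omega^c)\ge \alpha_J t_i$, so $4D_i\subset\Omega$ and in particular $D_i\subset\Omega$.

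For \ref{Chain4}, first fix $q$ close enough to $1$ that consecutive centers are close compared with the radii. We have $|x_i - x_{i+1}| \le t_i - t_{i+1} = (1-q)t_i$, and we want this to be at most, say, $\tfrac{1}{2}\cdot \alpha_J t_{i+1}/4 = \alpha_J q t_i/8$. It suffices that $(1-q) \le \alpha_J q/8$, for instance $q := 1/(1+\alpha_J/8)$, which depends only on $\alpha_J$. We then take $V_i := B(x_{i+1}, \alpha_J t_{i+1}/8)$. The triangle inequality gives $V_i \subset D_{i+1}$. For points $y\in V_i$ we have $|y-x_i| \le \alpha_J q t_i/8 + \alpha_J q t_i/8 < \alpha_J t_i/4$, so $V_i\subset D_i$ as well.

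It remains to choose $M$ with $D_i\cup D_{i+1}\subset MV_i\subset\Omega$. Every point of $D_i\cup D_{i+1}$ lies within $|x_i-x_{i+1}| + \alpha_J t_i/4 \le \alpha_J t_i/2$ of $x_{i+1}$. Since the radius of $V_i$ is $\alpha_J q t_i/8$, the choice $M = 4/q$ works for the first inclusion. For the second, $MV_i = B(x_{i+1}, \alpha_J t_i/2)$, while $\dist(x_{i+1},\Omega^c)\ge \alpha_J t_{i+1} = \alpha_J q t_i > \alpha_J t_i/2$ provided $q>1/2$, which holds for our $q$ since $\alpha_J<1$. Hence $MV_i\subset\Omega$. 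The only delicate point is this balancing of constants: $q$ must be close to $1$ so that consecutive balls overlap, yet the dilated ball $MV_i$ must remain inside the region controlled by the John condition. Both requirements are met because all radii are comparable to $\alpha_J t_i$ with fixed ratios. Note also that the chain is infinite and $x_i\to x$, as intended.
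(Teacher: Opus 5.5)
Your proof is correct and is essentially the paper's construction: the same $t_i=q^i\ell$, $x_i=\gamma(t_i)$, radii $\alpha_J t_i/4$, and $M=4/q$. The only differences are cosmetic. You take $q=8/(8+\alpha_J)$ and center $V_i$ at $x_{i+1}$ with radius $\alpha_J t_{i+1}/8$. The paper takes $q=(8-\alpha_J)/(8+\alpha_J)$, which only gives $|x_i-x_{i+1}|\le\frac12(r_i+r_{i+1})$, and therefore centers $V_i$ at a point of $\overline{B}(x_i,r_i/2)\cap\overline{B}(x_{i+1},r_{i+1}/2)$.
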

\begin{proof}
 
    Setting
    \begin{equation}
        q:=\frac{8-\alpha_J}{8+\alpha_J}\in(0,1),
    \end{equation}
    we define for $i\in\N$,
    \begin{equation}
        t_i:=q^i\ell,\quad x_i:=\gamma(t_i),\quad r_i=\frac{\alpha_J}{4} t_i=\frac{\alpha_J}{4} q^i\ell.
    \end{equation}
    We further set $D_i:=B(x_i,r_i)$. Fix $i\in\N$. Since $\gamma$ is an escape path, we have
    \begin{equation}
        \dist(x_i,\Omega^c)\geq \alpha_J t_i=4r_i> r_i, \label{boundInteressant}
    \end{equation}
    which implies $D_i\subset\Omega$.
    Property \ref{Chain1} and the first part of \ref{Chain2} are clearly satisfied.
    By $1$-Lipschitz continuity of $\gamma$, we have for all $i, j$,
    \begin{equation}
        |x_i-x_j|=|\gamma(t_i)-\gamma(t_j)|\leq |t_i - t_j|,
    \end{equation}
    and in particular \eqref{eqeq13} by letting $j \to +\infty$.
    Taking $j = i+1$ and using our choice of $q$, this also yields
    \begin{equation}
        |x_i-x_{i+1}|\leq (1-q)t_i=\frac{\alpha_J}{8} (1 + q) t_i= \frac{1}{2}(r_i + r_{i+1}),
    \end{equation}
    and thus
    \begin{equation}
        \overline{B}(x_i,r_i/2)\cap \overline{B}(x_{i+1},r_{i+1}{/2})\neq \emptyset.
    \end{equation}
    Now, let $z_i$ be any point in this intersection and set $\rho_i = r_{i+1}/2$.
    Since $r_{i+1} = q r_i$ and $q \in (0,1)$, it is straightforward that
    \begin{equation}
        B(z_i,\rho_i) \subset D_i \cap D_{i+1} \quad \text{and} \quad D_i \cup D_{i+1} \subset B(z_i,2r_i) = M
        B(z_i,\rho_i),
    \end{equation}
    where $M = 4/q$   and $V_i=B(z_i,\rho_i)$. It remains to prove that $MV_i\subset \Omega$. Notice that  the radius of $MV_i$ is equal to $2r_i$ and its center is $z_i\in B(x_i,r_i/2)$.  Therefore, $MV_i\subset B(x_i,4r_i)$.  On the other hand we know that ${\rm dist}(x_i, \Omega^c)\geq \alpha_Jt_i=4r_i$, so $MV_i\subset B(x_i,4r_i)\subset \Omega$ and so follows \ref{Chain4}.
    \end{proof}


We conclude this section on John domains with the following technical lemma. It shows that adding a small segment to the starting point or the endpoint of an escape path preserves the escape-path property, possibly after a slight modification of the constants.

\begin{lemma}[Adding a segment to the endpoint or the starting point]
    \label{LemmaJohnSegment}
    Let $G\subset \R^2$ be a set and let $\gamma \colon [0,\ell] \longrightarrow \R^2 \setminus G$ be an escape path from  $\gamma(0) = x$ to $\gamma(\ell)=x_0$ with constant $\alpha_J \in (0,1)$. Then
    \begin{enumerate}[label=(\arabic*)]
        \item for every  $y \in B(x_0, \alpha_J \ell/2)$, there exists an escape path from $x$ to $y$ with constant
            $\alpha_J/4$;
        \item for every $y\in B(x, \dist(x,G))$, there exists an escape path from $y$ to $x_0$ with constant $\alpha_J/4$. 
    \end{enumerate}
\end{lemma}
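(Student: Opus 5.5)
In both parts we concatenate $\gamma$ with a straight segment and check the escape condition \eqref{JohnConditionK} piece by piece. The two tools are the triangle inequality for $\dist(\cdot,G)$ and the bound $\dist(x_0,G)\ge \alpha_J\ell$, which is \eqref{JohnConditionK} at $t=\ell$.

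\emph{Part (1).} Let $y\in B(x_0,\alpha_J\ell/2)$ and set $s:=|y-x_0|<\alpha_J\ell/2$. Define $\tilde\gamma$ on $[0,\ell+s]$ as $\gamma$ on $[0,\ell]$, followed by the unit-speed segment from $x_0$ to $y$. This curve is $1$-Lipschitz.

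On $[0,\ell]$ the required bound holds trivially, since $\alpha_J t\ge (\alpha_J/4)t$.

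For $t=\ell+\sigma$ with $\sigma\in[0,s]$, the point $\tilde\gamma(t)$ lies within $\sigma$ of $x_0$. Hence
\[
\dist(\tilde\gamma(t),G)\ge \alpha_J\ell-\sigma\ge \alpha_J\ell-\tfrac{\alpha_J\ell}{2}=\tfrac{\alpha_J\ell}{2}.
\]
On the other hand,
\[
\tfrac{\alpha_J}{4}(\ell+\sigma)\le \tfrac{\alpha_J}{4}\Bigl(\ell+\tfrac{\alpha_J\ell}{2}\Bigr)\le \tfrac{\alpha_J}{4}\cdot\tfrac32\ell<\tfrac{\alpha_J\ell}{2}.
\]
So the bound holds on the segment, and in particular $\tilde\gamma$ avoids $G$.

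\emph{Part (2).} Let $d:=\dist(x,G)$ and $y\in B(x,d)$, and set $s:=|y-x|<d$. Define $\tilde\gamma$ on $[0,s+\ell]$ as the unit-speed segment from $y$ to $x$, followed by $\gamma(\cdot-s)$.

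On the segment, for $t\in[0,s]$, we have $\dist(\tilde\gamma(t),G)\ge d-(s-t)$. We need $d-s+t\ge \alpha_J t/4$. This is immediate because $d>s$ and $t\ge \alpha_J t/4$.

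On the second piece, take $t=s+\tau$. The delicate point is that the lower bound we get, $\alpha_J\tau$, must control $\tfrac{\alpha_J}{4}(s+\tau)$, and the distance $\alpha_J\tau$ is small when $\tau$ is small compared to $s$. We handle this with two lower bounds and split into cases.
\begin{itemize}
\item From the path, $\dist(\gamma(\tau),G)\ge \alpha_J\tau$.
\item From $1$-Lipschitz continuity, $\dist(\gamma(\tau),G)\ge d-\tau$.
\end{itemize}
If $\tau\ge s$, then $\alpha_J\tau\ge \tfrac{\alpha_J}{2}(s+\tau)$, which is more than enough.

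If $\tau<s$, we use $d-\tau$, and we subdivide according to the size of $s$.
\begin{itemize}
\item If $s\le d/2$, then $d-\tau\ge d/2\ge s\ge \tfrac{\alpha_J}{4}\cdot 2s\ge \tfrac{\alpha_J}{4}(s+\tau)$.
\item If $s>d/2$, we take the maximum of the two bounds. When $\tau\le d/2$, we get $d-\tau\ge d/2$, while $\tfrac{\alpha_J}{4}(s+\tau)\le \tfrac{\alpha_J}{4}\cdot\tfrac{3d}{2}<d/2$. When $\tau>d/2$, we get $\alpha_J\tau>\alpha_J d/2$, while $\tfrac{\alpha_J}{4}(s+\tau)<\tfrac{\alpha_J}{4}\cdot 2d=\alpha_J d/2$.
\end{itemize}
In every case the constant $\alpha_J/4$ works. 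The only step that needs care is this case analysis for small $\tau$; everything else is direct.
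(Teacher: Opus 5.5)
Your proof is correct and follows the paper's argument: you concatenate $\gamma$ with a straight segment and play the lower bound $\alpha_J\tau$ from the escape condition against $\dist(x,G)-\tau$ from the Lipschitz property. The differences are cosmetic. The paper parametrizes the added segment in (1) over $[\ell,2\ell]$ rather than at unit speed. In (2) it first proves $\dist(\gamma(\tau),G)\ge\frac{\alpha_J}{4}\bigl(\tau+\dist(x,G)\bigr)$ for all $\tau\in[0,\ell]$, using a single split at $\tau=\dist(x,G)/2$, and then uses $|y-x|<\dist(x,G)$; this makes your nested case analysis unnecessary.
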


\begin{proof}  We start with the proof of the first assertion. Let $y \in B(x_0, \alpha_J \ell/2)$ and define $\tilde{\gamma}$ by
    \begin{align*}
        \tilde{\gamma}(t)=
        \begin{cases}
            \gamma(t)                                        &\forall t\in[0,\ell],\\
            x_0 + \bigl(\frac{t - \ell}{\ell}\bigr) (y-x_0)  &\forall t\in[\ell,2\ell].
        \end{cases}
    \end{align*}
    The curve $\tilde{\gamma}$ is $1$-Lipschitz because $\abs{y - x_0} \leq \alpha_J \ell / 2 < \ell$.
    For $t\in[0,\ell]$, it is immediate that 
    \begin{equation} 
        \dist(\gamma(t),G)\geq \alpha_J t \geq \frac{\alpha_J}{4} t.
    \end{equation}
    And for $t\in [\ell,2\ell]$, we bound $\dist(x_0,G) \geq \alpha_J \ell$ and $\abs{y - x_0} \leq \alpha_J \ell/2$ to obtain
    \begin{align*}
        \dist(\tilde{\gamma}(t),G)\geq \dist(x_0,G) - \Bigl(\frac{t-\ell}{\ell}\Bigr)|y-x_0|\geq
        \frac{\alpha_J}{2}(3\ell - t) \geq \frac{\alpha_J}{4}t,
    \end{align*}
    which is the thesis. 

    \medskip

    We now prove the second assertion. Let $y\in B(x,\mathrm{dist}(x,G))$. We will obtain the new escape path as  the
    union of the segment $[y,x]$ with $\mathrm{spt}(\gamma)$. Before that, we first justify that 
    \begin{equation}
        \label{eqeq41}
        \mathrm{dist}(\gamma(s),G)\geq\frac{\alpha_J}{4}(s+\mathrm{dist}(x,G)),\quad\forall s\in[0,\ell].
    \end{equation}
    We distinguish two cases. If $s\geq \mathrm{dist}(x,G)/2$, then 
    \begin{equation}
        \mathrm{dist}(\gamma(s),G)\geq\alpha_Js \geq \frac{\alpha_J}{4}(s+\mathrm{dist}(x,G)).
    \end{equation}
    If $s \leq \mathrm{dist}(x,G)/2$, we use the $1$-Lipschitzness of $\gamma$ to bound from below
    \begin{equation}
        \mathrm{dist}(\gamma(s),G)
        \geq \mathrm{dist}(x,G)-|\gamma(s)-x|
        \geq \mathrm{dist}(x,G)-s \geq \frac{\mathrm{dist}(x,G)}{2}
    \end{equation}
    and then
    \begin{equation}
        \mathrm{dist}(\gamma(s),G)
        \geq \frac{\alpha_J}{4}(s+\mathrm{dist}(x,G)).
    \end{equation}
    The estimate \eqref{eqeq41} follows from the previous two cases.

    We now define $\tilde{\gamma}\colon[0,\ell+t_0] \longrightarrow \R^2$ as follows:
    \begin{equation}
        \tilde{\gamma}(t)=
        \begin{cases}
            y+\frac{t}{t_0}(x-y),\quad&\text{for }t\in[0,t_0],\\
            \gamma(t-t_0) &\text{for }t\in[t_0,\ell+t_0].
        \end{cases}
    \end{equation}
    where $t_0=|x-y|$.
    It is obvious that $\tilde{\gamma}$ is 1-Lipschitz. It is left to check the escape condition. If $t\in[0,t_0]$, then using 
    $|x-y|<\mathrm{dist}(x,G)$, we have
    \begin{equation}
        \mathrm{dist}(\tilde{\gamma}(t),G)\geq \mathrm{dist}(x,G)-|\tilde\gamma(t)-x|=\mathrm{dist}(x,G)-(|x-y|-t)\geq t.
    \end{equation}
    If $t\in[t_0,t_0+\ell]$, then by \eqref{eqeq41} we estimate
    \begin{equation}
        \mathrm{dist}(\tilde{\gamma}(t),G)=\mathrm{dist}(\gamma(t-t_0),G)\geq \frac{\alpha_J}{4}(t-t_0+\mathrm{dist}(x,G))\geq \frac{\alpha_J}{4}t.
    \end{equation}
    Collecting the previous two inequalities we get the thesis.
\end{proof}

\subsection{Korn Inequalities}

We recall two well-known Korn inequalities, which one can find for instance in \cite{korn}. The first one gives a
control on $u$ by the symmetric gradient, up to a rigid motion $a$: 
\begin{equation}
    \int_{B(x_0,r)} |u-a| \,dx \leq C r \int_{B(x_0,r)} |e(u)| \,dx. \label{Korn1}
\end{equation}
The rigid motion $a$ above is explicit. Actually, one can take any continuous operator $u\mapsto a$ which is
the identity on rigid motions, as for instance
\begin{equation}\label{eq_explicit_Korn}
    a(u)(x)=\left(\frac{1}{|\omega|}\int_{\omega} u \,dx\right) + \left(\frac{1}{|\omega'|}\int_{\omega'} \frac{\nabla u - \nabla^T u}{2} \,dx\right)(x-x_0),
\end{equation}
where 
\[x_0:=\frac{1}{|\omega|}\int_{\omega} x \,dx,\]
and $\omega,\omega'$ are any two non-negligible subsets of $B(x,r)$; the constant $C$ then depends on the choice of
$\omega$ and $\omega'$.
Notice that $x_0$ does not depend on $\omega'$ but only on $\omega$.

The second Korn inequality that can also be found in \cite{korn} is a control of the full gradient, up to a skew-symmetric matrix, in terms of its symmetric part alone: 
\begin{equation}
    \int_{B(x,r)} |\nabla u-A(u)| \,dx \leq C \int_{B(x,r)} |e(u)| \,dx. \label{Korn2}
\end{equation}
Here again, the skew-symmetric matrix $A$ is explicit and can be for instance 
\[A(u):=\left(\frac{1}{|\omega|}\int_{\omega} \frac{\nabla u - \nabla^T u}{2} \,dx\right),\]
where $\omega$ is any non-negligible subset of $B(x,r)$, and $C$ depends on the choice of $\omega$.
However, any linear operator that acts as the identity on skew-symmetric matrices would work.

For any ball $B(x_0,r)$, we set
\begin{equation}
    A_{B(x_0,r)}=\fint_{B(x_0,r)}\frac{\D u-\D u^T}{2}\,dx.
\end{equation}
The matrix $A_{B(x_0,r)}$ is sometimes called \emph{infinitesimal rotation associated to $u$ in $B(x_0,r)$.}
More generally, Korn inequalities remain valid in a John domain, with a constant $C$ depending additionally on the John-domain constant.
Here is a statement that follows for instance by gathering \cite[Theorem 1.5]{korn1} and \cite[Theorem 8]{korn2}.

\begin{proposition}\label{kornJohn}
    Let $\Omega \subset \R^2$ be a  John domain with constant $\alpha_J\in (0,1)$. Then for all $u\in W^{1,p}_{loc}(\Omega;\R^2)$
    and $p\in (1,+\infty)$ there exists a rigid motion $a(x):=Ax+b$ such that 
    \[\int_{\Omega} |u-a|^p\,dx \leq C {\rm diam}(\Omega)^p \int_{\Omega} |e(u)|^p \,dx,\]
    \[\int_{\Omega} |\nabla u -A|^p\,dx \leq C \int_{\Omega} |e(u)|^p \,dx,\]
    where $C \geq 1$ is a constant that depends only on $\alpha_J$ and $p$. 
\end{proposition}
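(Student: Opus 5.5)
The statement is a combination of two results from the literature. The natural plan is to prove the second (gradient) inequality first and then deduce the first one from a Poincaré inequality on John domains. Both steps rest on the same mechanism: chains of balls along John curves, as in Proposition~\ref{Chain}. By scale invariance of the John condition, we may normalize $\mathrm{diam}(\Omega)=1$. We may also assume $\int_\Omega |e(u)|^p<\infty$, since otherwise there is nothing to prove. The local Korn inequality in its $L^p$ version (the analogue of \eqref{Korn2} on balls, valid for $1<p<\infty$) then shows $\nabla u\in L^p_{loc}(\Omega)$.

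\textbf{Step 1: gradient inequality.} Fix a Whitney-type family of balls $B$ with $2B\subset\Omega$ and radius comparable to $\dist(B,\Omega^c)$, covering $\Omega$ with bounded overlap. Fix a central ball $B_0=B(x_0,\alpha_J\,\mathrm{diam}(\Omega)/8)$, and take $A:=A_{B_0}$. On each ball $B$ the local Korn inequality gives
\[\int_B|\nabla u-A_B|^p\le C\int_B|e(u)|^p.\]
It then remains to control $|A_B-A_{B_0}|$. For a point of $B$, Proposition~\ref{Chain} provides a chain $D_0,\dots,D_k$ joining $B_0$ to $B$. Consecutive balls overlap in a ball $V_i$ with $D_i\cup D_{i+1}\subset MV_i\subset\Omega$. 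Applying local Korn on $MV_i$ and comparing averages on $V_i$ yields
\[|A_{D_i}-A_{D_{i+1}}|\le C\Bigl(\fint_{MV_i}|e(u)|^p\Bigr)^{1/p}.\]
Telescoping gives $|A_B-A_{B_0}|\le C\sum_i (\fint_{MV_i}|e(u)|^p)^{1/p}$. The chain balls shrink geometrically towards $B$, and every ball $D$ of the chain satisfies $B\subset C\,D$; this is the "shadow" property, which follows from \eqref{eqeq13} together with $\dist(x_i,\Omega^c)\ge 4r_i$.

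\textbf{Main obstacle: summing in $L^p$.} A naive Hölder estimate along chains loses a factor depending on the chain length. I would use Boman's duality argument. Test $\sum_B \mathbbm 1_B |A_B-A_{B_0}|$ against $g\in L^{p'}$, and swap the sums over $B$ and over chain balls $D$, so that each $D$ receives $\sum_{B\subset CD}\int_B|g|\le\int_{CD}|g|$. Then $\int_{CD}|g|\lesssim\int_D M g$, where $M$ is the Hardy--Littlewood maximal function. Bounded overlap of the chain balls (they are Whitney-comparable) and the $L^{p'}$-boundedness of $M$, which requires $p>1$, give
\[\int_\Omega|\nabla u-A|^p\le C(\alpha_J,p)\int_\Omega|e(u)|^p.\]

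\textbf{Step 2: displacement inequality.} Set $w:=u-Ax$, so that $\nabla w=\nabla u-A$. The same chain argument, with local Poincaré inequalities replacing local Korn (this is the classical Poincaré inequality on John domains), gives a vector $b$ with
\[\int_\Omega|w-b|^p\le C\,\mathrm{diam}(\Omega)^p\int_\Omega|\nabla u-A|^p.\]
Combining this with Step 1 yields the first inequality for the rigid motion $a(x)=Ax+b$, with a constant depending only on $\alpha_J$ and $p$.
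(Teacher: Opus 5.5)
The paper does not prove this proposition. It quotes it from the two references cited just before the statement. You instead give a self-contained argument along the classical Boman-chain lines:
\begin{itemize}
\item local $L^p$ Korn inequalities on Whitney balls;
\item telescoping of the infinitesimal rotations along the chains of Proposition~\ref{Chain};
\item a duality/maximal-function argument to sum the chain contributions in $L^p$;
\item the Poincar\'e inequality on John domains, applied to $u-Ax$.
\end{itemize}
This is a legitimate route and fits the paper well. It is the global $L^p$ counterpart of Lemmas~\ref{LemmaMovRig}--\ref{LemmaMovRig3}. There the Morrey bound \eqref{eqeq17} turns the chain sum into the geometric series $\sum_i r_i^{-1/2}$, so no duality is needed. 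Without such a bound the summation is the real difficulty, and you located it correctly. The citation buys brevity; your argument buys self-containedness and a constant visibly depending only on $(\alpha_J,p)$.

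One step does not work as literally written and needs the standard repair. You assert bounded overlap of ``the chain balls'', but each Whitney ball $B$ gets its own chain along its own John curve. The union of these chains over all $B$ has unbounded overlap. Every chain starts with $D_0=B(x_0,\alpha_J\ell/4)$, with $\ell$ depending on $B$. Whenever the starting point satisfies $|x-x_0|\geq \mathrm{diam}(\Omega)/4$ (infinitely many Whitney balls qualify), one has $\ell\ge|x-x_0|$, so this ball contains $B(x_0,\alpha_J\mathrm{diam}(\Omega)/16)$.

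Hence the final step $\sum_D a_D\int_D Mg\le\|\sum_D a_D\mathbf{1}_D\|_{L^p}\|Mg\|_{L^{p'}}$, with $\|\sum_D a_D\mathbf{1}_D\|_{L^p}^p\lesssim\int_\Omega|e(u)|^p$, fails. The swap of sums must run over one fixed bounded-overlap family. Here is the fix:
\begin{itemize}
\item You stop the chain at the index $k$ with $r_k\sim r_B$. For $i\le k$ you have $\dist(MV_i,\Omega^c)\ge\tfrac32 r_i$ and $\dist(x_i,\Omega^c)\le\dist(x,\Omega^c)+q^i\ell\lesssim r_i$.
\item So $MV_i$ meets only boundedly many Whitney balls $W$, all of radius comparable to $r_i$. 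Bound $(\fint_{MV_i}|e(u)|^p)^{1/p}\le C\sum_{W\cap MV_i\neq\emptyset}(\fint_W|e(u)|^p)^{1/p}$.
\item Swap over the Whitney family. For fixed $B$ and $W$ only boundedly many $i$ contribute, and $B\subset CW$.
\end{itemize}
Equivalently, build the chains out of Whitney balls from the start. The junctions $D_0\leftrightarrow B_0$ and $D_k\leftrightarrow B$ also need one extra comparable ball each, as in Lemma~\ref{LemmaMovRig3}.

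Finally, a small misattribution: $M$ is bounded on $L^{p'}$ because $p'>1$, i.e.\ $p<\infty$. The restriction $p>1$ enters through the local $L^p$ Korn inequality on balls, which fails for $p=1$. Neither point affects your conclusion once repaired.
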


\section{Escape paths}

\subsection{No bounded connected component in the complement of a global minimizer}

In this section, we prove that $\R^2\setminus K$ has no bounded connected components when $K$ is the singular set of
a global minimizer. It is worth noting that the result extends to $\R^N$ ($N\geq 2$) with the same proof. 


\begin{proposition}[No isolated component]\label{noiso}
    Let $(u,K)$ be a global minimizer of the Griffith functional. Then $\R^2\setminus K$ has no bounded connected component.
\end{proposition}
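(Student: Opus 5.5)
Suppose by contradiction that $V$ is a bounded connected component of $\R^2\setminus K$. The idea is to build a topological competitor that removes the part of $K$ lying on $\dd V$ and extends $u$ across it, and to show it strictly lowers the energy. Since $\dd V\subset K$, coral and Ahlfors-regular (Theorem~\ref{ARTheorem} applies with $h\equiv 0$), we have $\H^1(\dd V)>0$. Indeed, $\dd V$ is nonempty and contained in $K$, and any $x\in\dd V$ satisfies $\H^1(K\cap B(x,r))\geq C_A^{-1}r$. We will however only use $\H^1$ of a part of $K$ adjacent to $V$.

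\textbf{First attempt: filling $V$.} Pick a large ball $B=B(x,R)$ with $x\in \dd V\subset K$ and $\overline V\subset B(x,R/2)$. Define $L := K\setminus \overline{V}$ and let $v$ equal $u$ outside $\overline V$ and a suitable extension inside. Separation outside $B$ is unaffected, since $\R^2\setminus (K\cup B)$ does not meet $\overline V$. Removing a closed set could, however, merge two components that meet outside $B$. To see that it cannot, note that removing $\overline V\cap K=\dd V$ only joins $V$ with the components adjacent to it. Two outside points lying in different components of $\R^2\setminus K$ can only be connected in $\R^2\setminus L$ through $\overline V$. That is precisely what the topological condition forbids, so this naive competitor may fail to be topological.

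\textbf{Fix: keep the boundary, change the interior.} Take instead $L:=K\setminus V'$, where $V'$ consists of the points of $K$ lying in the interior of $\overline V$ (e.g. the "inner" cracks of $V$), together with $v:=0$ on $V$. If $V$ has no inner crack, I argue differently: the competitor $(v,K)$ with $v=0$ on $V$ and $v=u$ elsewhere is admissible and topological, since $K$ is unchanged. Minimality then forces $e(u)=0$ on $V$, so $u$ is a rigid motion on $V$.

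Now replace $L:= K\setminus \Gamma$ by removing a small piece of $\dd V$ shared with exactly one outer component $W$. By a Jordan-type argument in the plane, since $V$ is bounded there is a component $W$ of $\R^2\setminus \overline V$ surrounding $V$. Let $K':=\dd V\cap \dd W$ (the outer boundary). The competitor keeps $K\setminus (\overline V)$, deletes $\dd V\setminus \dd W$ (all the inner cracks), and replaces the outer boundary by... This still changes the separation.

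\textbf{Cleaner route, which I will adopt.} Let $L:=(K\setminus \overline V)\cup \dd W_\infty$, where $W_\infty$ is the unbounded component of $\R^2\setminus\overline V$. Set $v:=u$ outside $\overline{V}\cup$(bounded components of $\R^2\setminus \overline V$), and $v:=a$ on the filled set $\widehat V:=\R^2\setminus \overline{W_\infty}$, where $a$ is a rigid motion. Then:
\begin{itemize}
\item $L\subset K$, and $L$ still separates $\widehat V$ from the outside;
\item outside points keep their separation, since nothing outside $\widehat V$ is changed;
\item the energy inside $\widehat V$ becomes zero.
\end{itemize}
Minimality gives
\[
\int_{\widehat V}\A e(u):e(u)+\H^1(K\cap \widehat V)\leq 0 .
\]
Hence $K\cap\widehat V$ is $\H^1$-null, and since $K$ is coral it is empty; thus $\widehat V=V$ and $e(u)=0$ on $V$.

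\textbf{Removing the boundary.} Finally, remove $\dd V$ altogether: set $L:=K\setminus \dd V$ and $v$ defined accordingly. This competitor is topological only if no two outer components are glued. The key point is to exploit the rigid motion: extending $u|_{W}$ into $V$ is costless, but the other outer components may still need to be separated. This step is the main obstacle. I would handle it by removing only an arc $\dd V\cap \dd W_\infty\cap B(y,\rho)$ near a point $y$ that is regular (flat, by rectifiability and density) and adjacent only to $V$ and $W_\infty$. We then extend $u|_{W_\infty}$ by reflection across that arc into $V$, and use the cut-off/rigid-motion freedom of $V$. The energy cost is $o(\rho)$ by the blow-up at a flat point, while $\H^1$ decreases by about $2\rho$ — a contradiction.
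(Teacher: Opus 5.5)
Your final paragraph has the right strategy, and it is essentially the paper's: work at a flat, low-energy point of $\dd V$ and use that $u$ is a rigid motion on the bounded component $V$. However, that step is only announced, and nothing before it produces a contradiction. In the ``cleaner route'', $L=(K\setminus\overline V)\cup\dd W_\infty$ still contains the crack lying in the holes of $\overline V$, so $\int_{\widehat V}\A e(u):e(u)+\H^1(K\cap\widehat V)\le 0$ does not follow. You would need $L=K\setminus\widehat V$, and even then you only learn that $\widehat V$ contains no crack. The whole proof therefore rests on the last step, where three ingredients are missing.

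First, the choice of $y$. ``Adjacent only to $V$ and $W_\infty$'' is not a usable condition, since $W_\infty$ is a component of $\R^2\setminus\overline V$, not of $\R^2\setminus K$, and may contain many components and pieces of $K$. Take coordinates centered at $y$ with approximating line $\{x_2=0\}$, and set $D_\pm=B(y,\rho)\cap\{\pm x_2>\delta\rho\}$. You need $\beta(y,\rho)\to0$ and $\rho^{-1}\int_{B(y,\rho)}|e(u)|^2\to0$, and in addition that $V$ contains $D_+$ or $D_-$. The paper gets this by taking $y\in\dd^*V$:
\begin{itemize}
\item $V$ is bounded and nonempty, with perimeter at most $\H^1(\dd V)<\infty$, so $\H^1(\dd^*V)>0$;
\item both limits hold $\H^1$-a.e.\ on $K$;
\item density $1/2$ of $V$ at $y$ forces $B(y,\rho)\cap V$ to leave the strip, so $V$ contains $D_+$ or $D_-$ by connectedness.
\end{itemize}

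Second, the energy bound. Whatever you put in $B(y,\rho)$ must match $u|_V$ on $\dd B(y,\rho)\cap V$. This is cheap only because $V$ is bounded, so a competitor in a large ball $B_0\supset V$ may replace $u|_V$ by \emph{any} rigid motion at zero cost. Choose it equal to the Korn approximant $a_-$ of $u$ on $D_-$. Then $w=\varphi u+(1-\varphi)a_-$, with the cut-off switching in $\{-3\delta\rho<x_2<-2\delta\rho\}$ and $w=a_-$ above, satisfies $\int_{B(y,\rho)}|e(w)|^2\le C(\delta)\int_{B(y,\rho)}|e(u)|^2\le C(\delta)\varepsilon\rho$. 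No reflection is needed, and without this choice of the rigid motion on $V$ your claimed $o(\rho)$ cost is unfounded.

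Third, admissibility. You remove ``an arc $\dd V\cap\dd W_\infty\cap B(y,\rho)$'' without checking which components get joined. The paper removes all of $K\cap B(y,\rho)$ and adds the arcs $Z=\dd B(y,\rho)\cap\{|x_2|\le3\delta\rho\}$. Then the only components glued are $V$ and the one containing $D_-$, since components trapped in the strip are cut off from the outside by $Z$. Because $V\subset B_0$, no two points outside $B_0$ become connected. Minimality in $B_0$ and Ahlfors regularity then give $C_A^{-1}\rho\le C(\delta)\varepsilon\rho+C\delta\rho$, a contradiction once $\delta$ and then $\varepsilon$ are small.
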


\begin{proof}
    Let us assume by contradiction that $\R^2\setminus K$ has a bounded connected component $\Omega_0$. By minimality, $u$
    must be a rigid motion in $\Omega_0$ (otherwise, the replacement of $u$ by a rigid motion inside $\Omega_0$ costs less energy).  Moreover, adding a rigid motion to a global minimizer still yields a global minimizer, so we may
    assume that $u(x)=a_+(x)$ in $\Omega_0$, where $a_+$ is an arbitrary rigid motion $a_+$ is arbitrary which will be chosen later.  It
    is standard by uniform density estimates \eqref{eq_AR1}, \eqref{eq_AR2} and the rectifiability of $K$
    \cite{camilleUR}, that for $\H^1$-almost every $x_0\in K$ the following two properties hold true: 
    \begin{equation}\label{Pr1}
        \lim_{r\to 0^+}\frac{1}{r}\int_{B(x_0,r)}|e(u)|^2\,dx=0
        \qquad\text{and}\qquad
        \lim_{r\to 0^+} \beta(x_0,r)=0.
    \end{equation}
    Let $x_0\in\dd^*\Omega_0\subset K$ satisfy \eqref{Pr1}. Let us fix $\delta\in(0,1/10)$ and $\varepsilon>0$ that
    will be chosen later. There exists $r>0$ such that 
    \begin{equation}
        \label{eqeq25}
        \int_{B(x_0,r)}|e(u)|^2\,dx\leq \varepsilon r
    \end{equation}
    and such that for some line $P$ passing through $x_0$,
    \begin{equation}
        K \cap \overline{B(x_0,r)} \subset \set{x \in \overline{B(x_0,r)} | \mathrm{dist}(x,P) \leq \delta r}. 
    \end{equation}
    Without loss of generality (see Remark~\ref{rmk_scaling}), we may assume that $x_0 = 0$.
    We also assume that $P = \{x_2 = 0\}$ to simplify the notation. 

    We denote 
    \begin{equation}
        D_+:=\set{x\in B_r | x_2 > \delta r}
        \quad \text{and} \quad
        D_-:=\set{x\in B_r | x_2 < -\delta r}.
    \end{equation}
    Furthermore, since $0 \in \partial^*\Omega_0$, we may assume that for $r$ small enough, $|B_r\cap \Omega_0|>
    |B_r \setminus (D_+ \cup D_-)|$. Thus, $B_r \cap \Omega_0$ cannot be contained in $B_r \setminus (D_+ \cup
    D_-)$, and $\Omega_0$ must therefore contain $D_+$ or $D_-$ by connectedness. In the sequel we assume without
    loss of generality that $D_+ \subset \Omega_0$; in other words, we have $u(x) = a_+(x)$ in $B_r\cap D_+$.

    We define the annulus $R:=B_r\setminus B_{3r/4}$. By Korn's inequality, there exists a universal constant $C>0$
    independent of $\delta$ and a rigid motion $a_-$ such that
    \begin{equation}\label{eq_Korn}
        \int_{R\cap D_-} |u-a_-|^2 \leq C r^2\int_{B_r}|e(u)|^2\,dx.
    \end{equation}
    Then we define 
    \begin{equation}
        w = \varphi u + (1-\varphi) a_+,
    \end{equation}
    where $\varphi$ is a cut-off function such that $\varphi=1$ on $B_r \cap \{x_2 \leq -3\delta r\}$, $\varphi=0$ on
    $B_r \cap \{x_2 \geq -2\delta r\}$ and $|\nabla \varphi|\leq C/(\delta r)$.  Notice that $w=u$ on $\partial B_r
    \setminus \{|x_2| \leq 3\delta r\}$ since $u = a_+$ on $D_+ = B_r \cap \set{x_2 > \delta r}$. A computation
    shows that 
    \begin{equation}\label{eq_ew}
        e(w) = \varphi e(u)+\nabla \varphi \odot (u - a_+).
    \end{equation}
    Observing that we only have $\nabla \varphi \ne 0$ on $B_r \cap \{-3\delta r \leq x_2 \leq -2\delta r\} \subset D_-$, we estimate 
    \begin{equation}
        \int_{B_r} \abs{e(w)}^2 \, dx \leq C \int_{B_r} \abs{e(u)}^2 \, dx + C (\delta r)^{-2} \int_{B_r \cap D_-} \abs{u - a_+}^2 \, dx
    \end{equation}
    Choosing $a_+$ equal to $a_-$, Korn inequality \eqref{eq_Korn} and \eqref{eqeq25} imply that
    \begin{equation}
        \label{eqeq26}
        \int_{B_r} \abs{e(w)}^2 \, dx \leq C(\delta) \int_{B_r} \abs{e(u)}^2 \, dx \leq C(\delta) \varepsilon r,
    \end{equation}
    where $C(\delta)$ also depends on $\delta$.

    Now we are in position to define a competitor $(u',K')$ of $(u,K)$ in $B_r$ by setting
    \begin{equation}
        K'=(K\setminus B_r)\cup Z,
    \end{equation}
    where $Z = \{x \in \dd B_r \;:\; |x_2|\leq 3 \delta r \}$, and
    \begin{equation}
        u'=
        \begin{cases}
            w & \text{in} \; B_r,\\
            u & \text{in} \; \Omega \setminus (B_r\cup Z).
        \end{cases}
    \end{equation}
    As $w$ and $u$ glue well along $\dd B_r\setminus Z$, the vector field $u'$ has locally weak derivatives in
    $\Omega \setminus K'$. Furthermore, $(u',K')$ is clearly a topological competitor for $(u,K)$ in any given ball $B_0$
    which sufficiently large that $B(x,2r) \cap \Omega_0 \subset B_0$.
    By minimality, we have
    \begin{equation}
        \int_{B_r}\A e(u) : e(u)\,dx+\H^1(K\cap\overline{B_r}) \leq \int_{B_r}\A e(u') : e(u')\,dx+\H^1(Z).
    \end{equation}
    Recalling that $C_A$ is the Ahlfors-regularity constant appearing in Theorem~\ref{ARTheorem} and \eqref{eqeq26}, we infer
    \begin{equation}
        C_A^{-1} r \leq C(\delta) \varepsilon r + C \delta r.
    \end{equation}
    Choosing $\delta<(4C_A C)^{-1}$ and $\varepsilon<(4C_A C(\delta))^{-1}$, we finally get a contradiction.
    %
\end{proof}

\begin{remark} 
    It would be possible to obtain an analogue of Proposition \ref{noiso} for Griffith almost-minimizers,
    rather than only for global minimizers. See for instance \cite[Proposition 68.1]{d} for a statement in the scalar case.
\end{remark}


\subsection{Construction of escape paths}

We now prove that from any point outside the crack has an escape curve. To this end, we need to develop some preliminary
results. The first of these is the following standard density estimate, similar to Proposition 4.1. in
\cite{BabadjianIurlanoLemenant}.

\begin{lemma}
    \label{LemmaDensity}
    For every $p\in(1,2]$, there exists a constant $C=C(p,\A) \geq 1$ such that the following holds: if $(u,K)$ is a
    global minimizer of the Griffith functional, for any $\varepsilon>0$, $x_0\in K$ and $r>0$ such that
    \begin{equation}
        \omega_p(x_0,r)+\beta(x_0,r) \leq \varepsilon,
    \end{equation}
    there exists $\rho \in (r/2,r)$ such that
    \begin{align*}
        \H^1(K\cap \overline{B(x_0,\rho)})-2\rho\leq C\varepsilon \rho. 
    \end{align*}
\end{lemma}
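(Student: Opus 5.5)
We will argue by comparison with a competitor in which $K\cap B(x_0,\rho)$ is replaced by a segment of the approximating line together with two small arcs of $\partial B(x_0,\rho)$. By Remark~\ref{rmk_scaling} and the scale invariance of $\omega_p$ and $\beta$, we may assume $x_0=0$, $r=1$, and that the optimal line in $\beta(0,1)$ is $P=\{x_2=0\}$. Thus $K\cap B_1\subset\{|x_2|\le \varepsilon\}$. We may also assume $\varepsilon\le\varepsilon_1$ for a small constant $\varepsilon_1=\varepsilon_1(p,\A)$. Indeed, if $\varepsilon>\varepsilon_1$, the upper bound \eqref{eq_AR2} (with $h=0$) gives $\H^1(K\cap\overline{B_\rho})\le C\rho$, and the claim holds with $C$ replaced by $C/\varepsilon_1$.

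\textbf{Choice of the radius.} By Fubini and Chebyshev, we pick $\rho\in(1/2,1)$ such that
\[
\int_{\partial B_\rho\setminus K}|e(u)|^p\,d\H^1\le C\int_{B_1\setminus K}|e(u)|^p\,dx\le C\varepsilon^{p/2},
\]
where the last bound is the definition of $\omega_p$ with $r=1$. Since $\H^1(K\cap B_1)<\infty$, we may also assume $\H^1(K\cap\partial B_\rho)=0$, and that the restriction of $u$ to $\partial B_\rho$ lies in $W^{1,p}$ on each of the two arcs $\partial B_\rho\cap\{x_2>\varepsilon\}$ and $\partial B_\rho\cap\{x_2<-\varepsilon\}$.

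\textbf{Construction of the competitor.} Set
\[
L=(K\setminus B_\rho)\cup\bigl(P\cap \overline{B_\rho}\bigr)\cup Z,\qquad Z=\partial B_\rho\cap\{|x_2|\le 2\varepsilon\},
\]
so that $\H^1(L\cap\overline{B_\rho})\le 2\rho+C\varepsilon$. Separation is preserved: points of $\Omega\setminus(K\cup B_\rho)$ that are separated by $K$ remain separated, because $P\cup Z$ disconnects $\overline{B_\rho}$ into two half-discs, and $K$ already lies within $\varepsilon$ of $P$. Hence $(v,L)$ is a topological competitor once $v$ is defined.

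Define $v=u$ outside $B_\rho$, and on each half-disc $B^\pm$ let $v$ be an extension of the boundary datum $u|_{\partial B_\rho\cap\{\pm x_2>2\varepsilon\}}$. To build it, first subtract a rigid motion $a_\pm$, obtained from a Korn inequality on the region $\{\pm x_2>\varepsilon\}\cap B_1\setminus B_{1/2}$, which is a Lipschitz domain avoiding $K$. Then use the standard extension estimate (harmonic-type extension, or an extension in the spirit of \cite{BabadjianIurlanoLemenant}) giving
\[
\int_{B^\pm}|e(v)|^2\le C\Bigl(\int_{\partial B_\rho\cap\{\pm x_2>\varepsilon\}}|\nabla_\tau (u-a_\pm)|^p\Bigr)^{2/p}.
\]
Controlling the full tangential derivative by $e(u)$ on the circle is not automatic. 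We handle it by combining the Korn inequality of Proposition~\ref{kornJohn} on the region with the choice of $\rho$ made through Fubini applied to $|\nabla u-A_\pm|^p$ instead of $|e(u)|^p$. The resulting energy bound is $C\omega_p(0,1)\le C\varepsilon$. Because $p\le 2$, we need an $H^{1/2}$ extension from a $W^{1,p}$ trace on a curve. In dimension two, $W^{1,p}(\partial B_\rho)\subset W^{1-1/p... }$ does not suffice when $p<2$, so the extension must lose integrability in a controlled way. This is the delicate step. What we try first is to extend into an annular neighbourhood using the $W^{1,p}$ Korn control in the annulus, interpolated via a cut-off as in the proof of Proposition~\ref{noiso}, with $|e(v)|^2$ bounded by $\varepsilon$ thanks to the $L^p\to L^2$ gain on the small strip. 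Finally, the two-sided strip $\{|x_2|\le2\varepsilon\}$ near $\partial B_\rho$ is absorbed by the jump $Z$.

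\textbf{Conclusion.} Minimality of the global minimizer gives
\[
\H^1(K\cap\overline{B_\rho})\le \H^1(L\cap\overline{B_\rho})+C\int_{B_\rho}|e(v)|^2\le 2\rho+C\varepsilon.
\]
Since $\rho>1/2$, this yields the claim $\H^1(K\cap\overline{B_\rho})-2\rho\le C\varepsilon\rho$. The main obstacle is the extension estimate bounding the elastic energy of $v$ by $C\varepsilon$ using only the $L^p$ control $\omega_p$ with $p<2$. It is exactly there that the dependence of $C$ on $p$ enters.
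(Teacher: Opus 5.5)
\textbf{There is a genuine gap at the step you flag yourself.} Your overall architecture is the paper's:
\begin{itemize}
\item flatten to $P=\{x_2=0\}$;
\item replace $K\cap B_\rho$ by $(P\cap B_\rho)\cup Z$;
\item subtract rigid motions $a_\pm$ from Korn on $\{\pm x_2>\varepsilon\}$;
\item choose $\rho$ by Fubini applied to $|\nabla u-A_\pm|^p$;
\item extend the boundary data with $\int_{B^\pm}|\nabla v|^2\le C\bigl(\int_{\partial B_\rho\cap\{\pm x_2>\varepsilon\}}|\partial_\tau(u-a_\pm)|^p\bigr)^{2/p}\le C\,\omega_p(0,1)$.
\end{itemize}
However, you then retract this extension bound. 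You assert that for $p<2$ a $W^{1,p}$ datum on the circle does not admit such an extension, and you replace it by a cut-off construction. As written, the proof therefore never establishes $\int_{B_\rho}|e(v)|^2\le C\varepsilon$, which is the entire content of the lemma.

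\textbf{Why the extension bound holds.} The datum lies in $W^{1,p}$ of a one-dimensional curve, not merely in the trace space $W^{1-1/p,p}$. In fact $W^{1,p}(\partial B_\rho)\hookrightarrow H^{1/2}(\partial B_\rho)$ for every $p>1$. The naive route, inserting $|f(x)-f(y)|\le\|f'\|_{L^p}|x-y|^{1-1/p}$ into the Gagliardo double integral, only works for $p>2$; that may be what misled you. For $1<p<2$ (the case $p=2$ is trivial), Hausdorff--Young does it. Write $f(\theta)=\sum_k\hat f_k e^{ik\theta}$. The harmonic extension has Dirichlet energy $2\pi\sum_k|k|\,|\hat f_k|^2$, and by Hölder with exponents $p'/2$ and $p'/(p'-2)$,
\[
\sum_{k\neq 0}|k|\,|\hat f_k|^2=\sum_{k\neq 0}|k|^{-1}|k\hat f_k|^2
\le \Bigl(\sum_{k}|k\hat f_k|^{p'}\Bigr)^{2/p'}\Bigl(\sum_{k\neq 0}|k|^{-\frac{p'}{p'-2}}\Bigr)^{\frac{p'-2}{p'}}
\le C(p)\,\|\partial_\theta f\|_{L^p(0,2\pi)}^2 .
\]
After rescaling, the right-hand side is $C(p)\rho^{2-2/p}\bigl(\int_{\partial B_\rho}|\partial_\tau f|^p\,d\H^1\bigr)^{2/p}$. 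This is exactly \cite[Lemma~22.32]{d}, which the paper invokes. This is where the $p$-dependence of $C$ enters; the constant blows up only as $p\downarrow 1$.

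\textbf{Why your fallback fails.} With $w=\varphi u+(1-\varphi)a$ one has $e(w)=\varphi e(u)+\nabla\varphi\odot(u-a)$. The term $\varphi e(u)$ is controlled only by $\int|e(u)|^2$ on the support of $\varphi$. That quantity is bounded by \eqref{eq_AR2} but is not small under your hypothesis. Smallness of $\omega_p$ with $p<2$ gives no smallness in $L^2$, since Hölder goes the other way, so there is no $L^p\to L^2$ gain. Making the transition layer thin only trades this term against $|\nabla\varphi|^2|u-a|^2$, and balancing the two gives a sublinear power of $\varepsilon$ rather than $C\varepsilon$. One must discard $u$ entirely inside $B_\rho$, as the harmonic extension does.

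\textbf{A smaller point.} The extension requires data on the whole circle, whereas yours lives only on the arc $\partial B_\rho\cap\{\pm x_2>\varepsilon\}$. The paper maps this arc bi-Lipschitzly onto a half-circle and reflects evenly. This preserves the $L^p$ bound on $\partial_\tau$ up to a constant. It then extends to $B_\rho$ and uses the extension only on the corresponding half-disc. With these two repairs your argument coincides with the paper's.
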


\begin{proof}
    Let $x_0 \in K$ and $r > 0$ be such that $\omega_p(x_0,r)+\beta(x_0,r)\leq\varepsilon$. Without loss of generality,
    we may assume that $B(x_0,r) = B_1$ (see Remark~\ref{rmk_scaling}).  We let $P$ denote a line passing through
    the origin such that
    \begin{equation}
        K \cap B_1 \subset \{x \in B_1 \, : \, \mathrm{dist}(x,P) \leq \beta(0,1)\}.
    \end{equation}
    Without loss of generality, we may assume that $P=\{x_2=0\}$.
    Let $a_\pm$ be a rigid motion, with matrix $A_{\pm}$, associated to $D_\pm = B_1 \cap \{\pm x_2>\varepsilon\}$.
    We thus have by Korn inequality,
    \begin{equation}\label{eq_r1}
        \int_{D_{\pm}} \abs{\D u - A_\pm}^p \, dx \leq C \int_{D_{\pm}} \abs{e(u)}^p \, dx \leq C \omega_p(0,1)^{p/2},
    \end{equation}
    for some constant $C = C(p) \geq 1$, which only depends on $p$.
    For a.e. $\rho \in (0,1)$, it holds that $u-a_\pm\in W^{1,p}(S_\pm)$, where $S_{\pm}=\{\pm
    x_2>\varepsilon\}\cap\dd B_\rho$, and $u - a_\pm$ has a tangential derivative which can be computed from
    the restriction of $\D u-A_\pm$. By Chebyshev's inequality and Fubini's theorem, one can further select a
    radius $\rho \in (1/2,1)$ such that
    \begin{equation}\label{eq_r2}
        \int_{S_\pm}|\D u-A_\pm|^p\,d\H^1\leq 4 \int_{D_\pm}|\D u-A_{\pm}|^p\,dx.
    \end{equation}
    We want to construct a competitor for $(u,K)$ in $B_\rho$. We set
    \begin{equation}
        G=(P \cap B_\rho) \cup Z \cup (K\setminus B_\rho), 
    \end{equation}
    where $Z=\{y\in\dd B_\rho \;:\; \abs{x_2} \leq \beta(0,1)\}$. It holds that
    \begin{equation}\label{eqq2}
        \mathcal{H}^1(G \cap \overline{B_\rho})\leq 2\rho + C\beta(0,{1}).
    \end{equation}
    We want to extend the functions $u_{|S_+}$ and $u_{|S_-}$ to the whole sphere $\dd B_\rho$. We reason with
    $u_{|S_+}$, the other being similar. Let $\phi$ be a diffeomorphism of $\dd B_\rho$ such that $\phi(S_+)=\dd
    B_\rho\cap\{x_2>0\}$. We extend $((u-a_+)\circ\phi^{-1})_{|\dd B_\rho\cap\{x_2>0\}}$ by symmetry to the whole circle $\dd B_\rho$, obtaining a function $\tilde{u}_+$. We set $u_+=\tilde{u}_+\circ\phi$. By construction,
    it holds that $u_+\in W^{1,p}(\dd B_\rho)$, that $u_+ = u - a_+$ on $S^+$ and that
    \begin{equation}\label{eq_r3}
        \int_{\dd B_\rho}|\D_\tau u_+|^p\,d\H^1\leq C\int_{S_+}|\D_\tau(u-a_+)|^p\,d\H^1\leq C\int_{S_+}|\D u-A_+|^p\,d\H^1,
    \end{equation}
    for some constant $C=C(p) \geq 1$, which only depends on $p$.
    By \cite[Lemma 22.32]{d}, \eqref{eq_r1}, \eqref{eq_r2}, \eqref{eq_r3}, there exist a function $v_+\in
    H^1(B_\rho)$ which extends $u_+$ on $B_\rho$ such that
    \begin{equation}\label{eqq3}
        \begin{aligned}
            \int_{B_\rho}|\D v_+|^2\,dx
             & \leq C \biggl(\int_{\dd B_\rho}|\D_\tau u_+|^p\,d\H^1\biggr)^{2/p}\\
             & \leq C \biggl(\int_{S_+}|\D u-A_+|^p\,d\H^1\biggr)^{2/p}\\
             & \leq C \biggl(\int_{D_+}|\D u-A_+|^p\,dx\biggr)^{2/p} \leq C \omega_p(0,1)
        \end{aligned}
    \end{equation}
    Similarly we construct $v_-\in H^1(B_\rho)$ such that 
    \begin{equation}
        \label{eqq4}
        \int_{B_\rho}|\D v_-|^2\,dx\leq C\omega_p(0,1).
    \end{equation}
    We then define the function $v$ as follows:
    \begin{equation}
        v=
        \begin{cases}
            v_- + a_-\quad&\text{in }B_\rho\cap  {\{x_2<0\}},\\
            v_+ + a_+ &\text{in }B_\rho\cap  {\{x_2>0\}},\\
            u & \text{in }\R^2\setminus (B_\rho\cup K).
        \end{cases}
    \end{equation}
    It is clear that $v_-+a_-$ and $u$ glue well along ${\dd B_\rho\cap\{x_2<0\}\setminus Z}$ and the same holds for $v_++a_+$ and $u$
    along ${\dd B_\rho\cap\{x_2>0\}\setminus Z}$, which implies that $v$ has weak derivatives in $\Omega \setminus G$. Furthermore,
    the pair $(v,G)$ is a topological competitor for $(u,K)$ in $B_\rho$ (see \cite[Proposition 45.1]{d}). By
    minimality, we have
    \begin{align*}
        \int_{B_\rho\setminus K} \A e(u):e(u) \,dx+\H^1(K\cap\overline{B_\rho})
& \leq \int_{B_\rho\setminus G} \A e(v):e(v) \,dx+\H^1(G\cap\overline{B_\rho})\\
& \leq C \int_{B_\rho}|\D v_-|^2\,dx+ C\int_{B_\rho}|\D v_+|^2\,dx\\
&\hphantom{\leq} + \H^1(G\cap\overline{B_\rho}),
    \end{align*}
    where we have also used that $e(a_-)=e(a_+)=0$. Combining the previous estimate, \eqref{eqq2}, \eqref{eqq3},
    \eqref{eqq4} and using our assumption $\beta(0,1) + \omega(0,1) \leq C \varepsilon$, we get
    \begin{equation}
        \H^1(K\cap\overline{B_\rho})-2\rho
        \leq C(\omega_p(0,1)+\beta(0,1))\leq C\varepsilon \leq 2 C \varepsilon \rho,
    \end{equation}
    which implies the thesis.
\end{proof}

An additional lemma shows that if in a ball centered on the boundary of a connected component the normalized
energy and the flatness are small, then this component must fill one side of the ball.  This prevents an
infiltration of a component of $\Omega \setminus K$ in a narrow strip.

\begin{lemma}[Infiltration Lemma] 
    \label{LemmaStrip}
    For every $p\in(1,2]$, there exists $\varepsilon_0=\varepsilon_0(p,\A)>0$ such that the following holds: if
    $(u,K)$ is a global minimizer of the Griffith functional and if $\Omega_0$ is a connected component of
    $\R^2\setminus K$, for every $\varepsilon\in(0,\varepsilon_0)$, $x_0\in\dd\Omega_0$ and $r>0$ such that
    \begin{equation}
        \omega_p(x_0,r)+\beta_K(x_0,r) \leq \varepsilon,
    \end{equation}
    then $\Omega_0$ contains one of the two connected components of 
    \[B(x_0,r)\setminus \{x\in B(x_0,r) \mid \dist(x,P) \leq \varepsilon r\},\]
    where $P$ is a line which achieves the infimum in the definition of the flatness.
\end{lemma}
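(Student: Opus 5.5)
We argue by contradiction, reusing the competitor construction of Proposition~\ref{noiso}; Lemma~\ref{LemmaDensity} supplies the length bookkeeping. By Remark~\ref{rmk_scaling} we may assume $x_0=0$, $r=1$ and $P=\{x_2=0\}$. Set $D_\pm := B_1\cap\{\pm x_2>\varepsilon\}$. Since $\beta(0,1)\le\varepsilon$, both $D_+$ and $D_-$ are connected subsets of $\R^2\setminus K$. Hence each of them lies entirely in one component of $\R^2\setminus K$.

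Suppose $\Omega_0$ contains neither $D_+$ nor $D_-$. Then $\Omega_0\cap B_1$ is contained in the strip $S:=\{|x_2|\le\varepsilon\}\cap B_1$. Moreover $0\in\partial\Omega_0$, so $\Omega_0\cap B_{1/2}\neq\emptyset$. There are two cases.

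\emph{Case A: $\Omega_0\subset B_1$.} Then $\Omega_0$ is a bounded component of $\R^2\setminus K$, which is excluded by Proposition~\ref{noiso}.

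\emph{Case B: $\Omega_0$ is not contained in $B_1$.} Then $\Omega_0$ must leave $B_1$ through the two short arcs $\partial B_\rho\cap\{|x_2|\le\varepsilon\}$, for every $\rho\in(1/2,1)$. In this case we build a topological competitor in $B_\rho$ that deletes the infiltrated part of the crack. Keep the construction of Lemma~\ref{LemmaDensity}: choose $\rho\in(1/2,1)$ by Fubini with good traces on $S_\pm$, and let $G=(P\cap B_\rho)\cup Z\cup(K\setminus B_\rho)$, where $Z$ consists of the two arcs of $\partial B_\rho$ of length $\le C\varepsilon$. Set $v=v_\pm+a_\pm$ on the two half-discs. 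This gives
\[
\int_{B_\rho\setminus K}\A e(u):e(u)+\H^1(K\cap\overline{B_\rho})\le \H^1(G\cap \overline{B_\rho})+C\varepsilon\le 2\rho+C\varepsilon .
\]
We then argue that the infiltration forces extra length in $K$. The component $\Omega_0$ meets $B_{1/2}$, reaches $\partial B_\rho$, and stays inside the thin strip $S$. Its boundary inside $B_\rho$ must therefore contain two disjoint curves, an ``upper'' one separating $\Omega_0$ from $D_+$ and a ``lower'' one separating it from $D_-$. Each of these crosses the strip horizontally from near $0$ to $\partial B_\rho$. This should yield $\H^1(K\cap B_\rho)\ge 2\rho+c$ for a definite $c>0$: each side contributes length about $\rho$ on at least one half, plus the full line elsewhere.

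To make the last step rigorous, we project $K\cap B_\rho$ onto $P$. For almost every $t$ with $|t|<\rho$ and $t$ in the projection of $\Omega_0\cap B_\rho$, the vertical line $\{x_1=t\}$ meets $\Omega_0$. It must then meet $K$ both above and below that point, since $D_\pm\not\subset\Omega_0$ while $D_\pm$ meets the line. Because $\Omega_0$ is connected, touches $B_{1/2}$ and exits $B_\rho$, its projection contains an interval of length at least $\rho/2$. On this interval the vertical lines meet $K$ twice, and everywhere else in $(-\rho,\rho)$ at least once (by $\beta$ small, $K$ separates $D_+$ from $D_-$ unless they lie in a common component). This gives $\H^1(K\cap B_\rho)\ge 2\rho+\rho/2$, which contradicts the bound above for $\varepsilon<\varepsilon_0$. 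The case where $D_+$ and $D_-$ lie in the same component needs separate care; there one replaces $K$ by $P$ only if this is topologically admissible. This topological bookkeeping, i.e.\ checking that $(v,G)$ remains a topological competitor given how $\Omega_0$ exits $B_\rho$, is the main obstacle. I would handle it via \cite[Proposition 45.1]{d}, possibly adding the full arcs $Z$ to $G$.
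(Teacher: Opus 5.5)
Your set-up matches the paper's. You rule out a bounded $\Omega_0$ with Proposition~\ref{noiso} and trap $\Omega_0\cap B_1$ in the strip. You then observe that every vertical line $L_t=\{x_1=t\}$ meeting $\Omega_0\cap B_\rho$ must cross $\partial\Omega_0\subset K$ once below and once above, because it also meets $D_-$ and $D_+$. The gap is in how you count.

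At the scale of Lemma~\ref{LemmaDensity}, the set of double-covered abscissae is at best an interval of length about $\rho$. Even that requires starting the path at a point of $\Omega_0$ near $0$, which $0\in\partial\Omega_0$ allows. A point merely in $B_{1/2}$ gives length about $\rho-1/2$, which may be tiny since $\rho$ is only known to lie in $(1/2,1)$. So double covering alone gives $\H^1(K\cap B_\rho)\gtrsim 2\rho$. This matches the upper bound $2\rho+C\varepsilon$ to leading order, so there is no contradiction.

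To gain anything you need at least single covering over the rest of $(-\rho,\rho)$. For this you use that $K$ separates $D_+$ from $D_-$ inside $B_\rho$, which is not known. The case you leave open is exactly where the argument stalls: $K$ bounds $\Omega_0$ by two nearly parallel pieces on one side, while $D_+$ and $D_-$ communicate through gaps on the other side. Your proposed remedy also aims at the wrong place. The competitor $(v,G)$ of Lemma~\ref{LemmaDensity} is topological however the components of $\R^2\setminus K$ are arranged, since $G$ contains $P\cap B_\rho$ and $Z\supset K\cap\partial B_\rho$. So the upper bound is never in question; what is missing is a better lower bound.

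The missing idea is to change center and scale so that the double covering spans a whole diameter. The paper takes $y\in B_1\cap\partial\Omega_0\cap L_{1/2}$. For $t\in(1/4+\delta,3/4-\delta)$, every point of $B_1\cap\partial\Omega_0\cap L_t$ lies in $B(y,1/4)$, because all points of $K\cap B_1$ have $|x_2|\leq\varepsilon$. The coarea formula then gives $\H^1(K\cap B(y,1/4))\geq 1-4\delta$, about twice the diameter. On the other hand, Lemma~\ref{LemmaDensity} applied around $y$ at scale $1/4$ bounds the length by about one diameter, $1/2+C\varepsilon$. This application is admissible by \eqref{eq_scaling}, since $\omega_p+\beta$ there is $\lesssim\varepsilon$. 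No information about $K$ away from $\Omega_0$ is needed.

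Alternatively, you can rescue your same-scale count without any separation property. By bilateral flatness every point of $P\cap B_1$ is $\varepsilon$-close to $K$. Along the half of $P\cap B_\rho$ that is not doubly covered, you can therefore find of order $\rho/\varepsilon$ disjoint balls $B(q_k,\varepsilon)$ with $q_k\in K$. Theorem~\ref{ARTheorem} gives these balls total length $\gtrsim C_A^{-1}\rho$, which beats $C\varepsilon$ for $\varepsilon$ small.
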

\begin{proof}
    Without loss of generality, we may assume that $B(x_0,r) = B_1$.
    Fix any $\delta \in (0,1/10)$ and assume by contradiction that
    \begin{equation}
        \Omega_0\cap B_1 \subset \{x \in B_1 \mid \dist(x,P) \leq \varepsilon\},
    \end{equation}
    where $\varepsilon \leq \varepsilon_0 < \delta/2$, and $P$ is a line passing through the origin.
    To simplify the notation, we also assume that $P=\{x_2=0\}$. 
    Since
    $0\in\dd\Omega_0\subset\overline{\Omega_0}$, there exists a point $z_0 \in\Omega_0$ such that $\abs{z_0} <
    \delta$. By the unboundedness of $\Omega_0$, we have that $\Omega_0\cap\dd B_1 \neq\emptyset$. Furthermore,
    since $\Omega_0$ is a connected open set, $\Omega_0$ is arcwise connected. Therefore there exists a curve
    $\gamma$ in $\Omega_0$ that joins $z_0$ with a point $z = (z_1,z_2) \in \Omega_0 \cap \partial B_1$.
    Without loss of generality, we may assume that $z_1 \geq 0$, in particular such a point $z$ satisfies
    \begin{equation}
        z_1 \geq \abs{z} - \abs{z_2} \geq 1 - \varepsilon > 1 - \delta.
    \end{equation}
    Recalling that $\gamma$ starts at $z_0$ such that $\abs{z_0} \leq \delta$, we deduce that for all $t \in
    (\delta,1 - \delta)$, the vertical line $L_t := \{x_1 = t\}$ meets $\spt(\gamma) \cap B_1 \subset \Omega_0 \cap
    B_1$. It also meets both components of $\{x \in B_1 \mid \dist(x,P) > \varepsilon \}$, which are contained in
    $\Omega_0^c$, whence
    \begin{equation}
        \card{B_1 \cap \dd\Omega_0\cap L_t} \geq 2.
    \end{equation}
    Let $y = (y_1,y_2) \in B_1 \cap \dd \Omega_0$ be one of the points of $B_1 \cap \dd\Omega_0$ that the line
    $L_{1/2}$ meets. Notice that for all $t \in (1/4+\delta, 3/4 - \delta)$, any point $x = (x_1,x_2)$ in $B_1 \cap
    \dd \Omega_0 \cap L_t$ necessarily belongs to $B(y,1/4)$ since one has 
    \begin{equation}
        \abs{x_1 - y_1} = \Bigl\lvert x_1 - \frac{1}{2} \Bigr\rvert < \frac{1}{4} - \delta
        \quad \text{and} \quad
        \abs{x_2 - y_2} \leq \abs{x_2} + \abs{y_2} \leq 2\varepsilon < \delta. 
    \end{equation}
    Thus,
    \begin{equation}
        \card{\dd\Omega_0 \cap B(y,1/4) \cap L_t} \geq 2,\quad\forall t \in \Bigl(\frac{1}{4} + \delta, \frac{3}{4}- \delta\Bigr).
    \end{equation}
    On one hand, by the coarea formula we get
    \begin{equation}
        \label{eqeq27}
        \H^1\bigl(\dd \Omega_0 \cap B(y,1/4)\bigr)
        \geq \int_{1/4 +\delta}^{3/4 - \delta} \card{\dd\Omega_0 \cap B(y,1/4) \cap L_t}\,dt \geq 2\Bigl(\frac{1}{2}-2\delta\Bigr)=1-4\delta.
    \end{equation}
    On the other hand, by the scaling inequalities \eqref{eq_scaling}, we have
    \begin{equation}
        \omega_p(0,1/4) + \beta(0,1/4) \leq 4^{\frac{4}{p}-1}\varepsilon.
    \end{equation}
    and thus, assuming $\varepsilon_0$ small enough, Lemma~\ref{LemmaDensity} implies
    \begin{equation}
        \H^1\bigl(\dd \Omega_0\cap B(y,1/4)\bigr)\leq \H^1\bigl(K\cap B(y,1/4)\bigr)\leq \frac{1}{2}+ C4^{\frac{4}{p}-2}\varepsilon.
    \end{equation}
    Choosing finally $\varepsilon_0< 1/(100 C4^{4/p - 1})$, the previous chain of inequalities and \eqref{eqeq27} lead to a
    contradiction. We conclude that there exists a point of $\Omega_0$ which belongs to one of the two connected
    components of $\{x\in B_1 \;:\; \dist(x,P) > \varepsilon\}$. Since 
    \begin{equation}
        B_1 \cap \dd\Omega_0 \subset B_1 \cap K \subset \{x \in B_1 \mid \dist(x,P) \leq \varepsilon\},
    \end{equation}
    and $\Omega_0$ is connected, then we obtain the thesis.
\end{proof}

The last preliminary result follows from quantitative rectifiability properties of Griffith minimizers
\cite[Corollary 3.3]{camilleUR}.

\begin{lemma}\label{LemmaCarleson}
    For every $p\in(1,2]$, $\varepsilon>0$ and for every constant $C_0 \geq 1$ there exists a constant
    $a=a(p,\A,C_0,\varepsilon) \in (0,1/2)$ such that the following holds: if $(u,K)$ is a global minimizer of the Griffith
    functional, $x\in K$, $r>0$ and $E\subset K\cap B(x,r)$ is a measurable set such that $\H^1(E)\geq
    C_0^{-1}r$ then there exist $y\in E \cap B(x,r/2)$ and $t\in (ar,r/2)$ such that
    \begin{equation}
        \omega_p(y,t)+\beta(y,t) \leq \varepsilon.
    \end{equation}
\end{lemma}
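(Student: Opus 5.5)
The plan is to combine the uniform rectifiability of $K$ from \cite{camilleUR} with the scale-invariant energy bounds, via a Carleson-measure (pigeonhole) argument. By Remark~\ref{rmk_scaling}, the quantities $\omega_p$ and $\beta$ are invariant under rescaling, and a global minimizer rescales to a global minimizer. We may therefore assume $r=1$ and $x=0$, so that $E\subset K\cap B_1$ with $\H^1(E)\geq C_0^{-1}$.

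\textbf{Step 1: Carleson bounds.} We shall use two Carleson-type estimates on $K\cap B_2$, where $\mu:=\H^1\llcorner K$ and $dt/t$ is the scale measure.
\begin{enumerate}[label=(\alph*)]
    \item \emph{Flatness.} Since $K$ is Ahlfors regular (Theorem~\ref{ARTheorem} and \eqref{eq_AR2}) and uniformly rectifiable by \cite{camilleUR}, the bilateral weak geometric lemma holds. For each $\eta>0$, the set of pairs $(y,t)\in (K\cap B_1)\times(0,1)$ with $\beta(y,t)>\eta$ has $\mu\times dt/t$ measure at most $C(\eta)$. I would take this from \cite[Corollary~3.3]{camilleUR}; for a global minimizer the relevant constants depend only on $\A$.
    \item \emph{Energy.} For $p<2$ we expect $\int_{K\cap B_1}\int_0^1 \omega_p(y,t)\,\frac{dt}{t}\,d\mu(y)\leq C$. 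The route I would try first: by Hölder and Fubini, $\omega_p(y,t)\leq C t^{1-4/p}\bigl(\int_{B(y,t)}|e(u)|^p\bigr)^{2/p}$. Integrating in $y$ against $\mu$ gives roughly $\int |e(u)(z)|^p\,\mu(B(z,t))\,dz$ with $\mu(B(z,t))\lesssim t$. This must be combined with the fact that $|e(u)|^p$ is integrable with gain near $K$, using a Hölder/Jensen step to pass from the power $2/p$ outside. This is the standard ``$\omega_p$ has a Carleson bound for $p<2$'' estimate, and it is presumably also contained in \cite[Corollary~3.3]{camilleUR}, which I would cite.
\end{enumerate}

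\textbf{Step 2: pigeonhole.} Let $F:=E\cap B_{1/2}$. If $\H^1(F)\geq (2C_0)^{-1}$, we argue as follows. Set $\eta=\varepsilon/2$ and let $Q$ be the total Carleson mass of the bad pairs, i.e.\ those with $\omega_p>\eta$ or $\beta>\eta$; by Step~1, $Q\leq C(\varepsilon)$, using Chebyshev for $\omega_p$. If every $(y,t)\in F\times(a,1/2)$ were bad, the mass would be at least $\H^1(F)\log(1/(2a))\geq (2C_0)^{-1}\log(1/(2a))$. This exceeds $C(\varepsilon)$ once $a=a(p,\A,C_0,\varepsilon)$ is small enough. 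Hence some good pair $(y,t)$ exists, which is the conclusion.

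\textbf{Main obstacle.} The hard part is the case in which $E$ has little mass in $B_{1/2}$. The statement demands $y\in B(x,r/2)$ while $E$ lives in $B(x,r)$, and a set of measure $C_0^{-1}$ can lie entirely in the annulus $B_1\setminus B_{1/2}$. Then $E\cap B(x,r/2)$ may even be empty, and the statement cannot be salvaged as written. I would therefore prove the version in which either $y\in E$ with the scale taken relative to $B(x,2r)$, or $E\cap B(x,r/2)$ carries a fixed fraction of the mass. Concretely, I would run the pigeonhole with $F=E$ and $t\in(ar,r/2)$, and check whether the later use only needs $y\in E\cap B(x,r)$. Apart from this, the real technical input is Step~1(b), the Carleson control of $\omega_p$, which I would extract from \cite{camilleUR} rather than reprove.
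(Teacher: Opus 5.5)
Your argument is the paper's: the paper takes from \cite{camilleUR} together with \cite[Corollary 41.5 and Theorem 23.8]{d} that $[\beta^2+\omega_p]\,d\H^1(y)\,dt/t$ is a Carleson measure, and then runs exactly your pigeonhole on $E\times(ar,r/2)$, with $a=\exp(-8C_pC_0\varepsilon^{-2})/2$. You split that Carleson bound into the bilateral weak geometric lemma plus a Carleson bound for $\omega_p$, which works just as well, though you only claim the latter for $p<2$ whereas the paper invokes it for all $p\in(1,2]$.

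Your objection to the statement is also correct: take $K$ a line through $x$, $u=0$ and $E=K\cap B(x,r)\setminus B(x,r/2)$. In fact the paper's proof negates the conclusion for every $y\in E$, so it likewise only produces $y\in E$ rather than $y\in E\cap B(x,r/2)$, and that is all Proposition~\ref{PropDisk} needs.
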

\begin{proof}
    From \cite[Theorem 1.1]{camilleUR}, we know that $K$ is a uniformly rectifiable set. In particular by
    \cite[Corollary 41.5]{d} and \cite[Theorem 23.8]{d} we know that 
    \begin{equation}
        \big[\beta^2(y,t)+\omega_p(y,t)\big]\frac{d\H^1(y)dt}{t}
    \end{equation}
    is a Carleson measure on $K\times(0,+\infty)$, which means that for all $x \in K$ and $r > 0$,
    \begin{equation}\label{eq_carleson}
        \int_{K\cap B(x,r)}\int_0^r\big[\beta^2(y,t)+\omega_p(y,t)\big]\frac{d\H^1(y)dt}{t}\leq C_p r.
    \end{equation}

    Let $x \in K$, $r > 0$ and let $E \subset K \cap B(x,r)$ be a measurable set such that $\H^1(E) \geq C_0^{-1} r$.
    Fix a constant $a \in (0,1/2)$ and assume that for all $y \in E$ and all $t \in (ar,r/2)$,
    $\omega_p(y,t)+\beta(y,t)> \varepsilon$, in particular $\omega_p(y,t) + \beta(y,t)^2 \geq
    \varepsilon^2/4$. Using \eqref{eq_carleson}, this implies
    \begin{align*}
        C_p r &\geq \int_{K\cap B(x,r)}\int_0^r [\beta(y,t)^2+\omega_p(y,t)]\frac{d\H^1(y)dt}{t}\\
              &\geq \frac{\varepsilon^2}{4} \H^1(E) \int_{ar}^{r/2}\frac{1}{t}dt\geq \frac{\varepsilon^2}{ {4}}C_0^{-1}\log\bigg(\frac{1}{2a}\bigg)r,
    \end{align*}
    which leads to a contradiction for $a = \exp(-8C_p C_0 \varepsilon^{-2})/2$.
\end{proof}

We can now use the previous lemmata to establish the main ingredient for the existence of escape curves. The following
result states that every connected component of $\Omega \setminus K$ satisfies an interior corkscrew condition. While
this already prevents the existence of cusps, this property is not yet strong as the existence of escape paths. For instance, a
component $\Omega_0$ given as a dyadic chain of balls connected by narrow necks would satisfy the proposition below,
but would not admit escape paths. We will rule out this kind of counter-example in
Lemma~\ref{LemmaCurvaPreliminareLoc}.

\begin{proposition}[Corkscrew condition]\label{prop_bigdisk}
    \label{PropDisk}
    There exists $a = a(\A) >0$ such that if $(u,K)$ is a global minimizer of the Griffith functional, $\Omega_0$
    is a connected component of $\R^2\setminus K$, $x_0\in\Omega_0$ and $r>0$, then $\Omega_0\cap B(x_0,r)$
    contains a disk of radius $ar$.
\end{proposition}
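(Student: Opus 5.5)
The plan is to split into two cases according to the distance from $x_0$ to $K$, and in the difficult case to combine Ahlfors-regularity, the Carleson Lemma~\ref{LemmaCarleson} and the Infiltration Lemma~\ref{LemmaStrip}. Fix $p\in(1,2)$ (say $p=3/2$) and let $\varepsilon_0=\varepsilon_0(p,\A)$ be given by Lemma~\ref{LemmaStrip}; set $\varepsilon:=\min(\varepsilon_0/2,1/10)$.

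\emph{Case 1.} Suppose $\dist(x_0,K)\geq r/4$. Then $B(x_0,r/4)\subset\Omega_0\cap B(x_0,r)$, and any $a\leq 1/4$ works.

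\emph{Case 2.} Suppose $\dist(x_0,K)<r/4$. I will pick a point $z\in\partial\Omega_0$ with $|z-x_0|=\dist(x_0,\partial\Omega_0)<r/4$. Such a point exists because the closest point of $K$ to $x_0$ along the segment from $x_0$ lies in $\partial\Omega_0$. Then $B(z,r/2)\subset B(x_0,r)$. The issue is that Lemma~\ref{LemmaCarleson} yields a good point $y\in K$, but I need $y\in\partial\Omega_0$, not just $y\in K$. So I will apply Lemma~\ref{LemmaCarleson} with $E:=\partial\Omega_0\cap B(z,r/4)$ and radius $r/4$. For this I need the lower bound $\H^1(E)\geq C_0^{-1}r$. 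Ahlfors-regularity of $K$ alone does not give it, since $\partial\Omega_0$ could a priori be a small part of $K$.

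\emph{Main obstacle: lower density of $\partial\Omega_0$.} To get it, I argue as follows. By Proposition~\ref{noiso}, $\Omega_0$ is unbounded. Since $z\in\partial\Omega_0$, the component $\Omega_0$ meets $B(z,\rho)$ for every small $\rho$, and it also reaches outside $B(z,r/4)$. If moreover $B(z,r/4)\not\subset\overline{\Omega_0}$, then for a.e.\ $s\in(r/8,r/4)$ the circle $\partial B(z,s)$ meets both $\Omega_0$ and its complement's interior, hence meets $\partial\Omega_0$ at least twice. The coarea formula then gives $\H^1(\partial\Omega_0\cap B(z,r/4))\geq r/8$. In the alternative case, where $B(z,r/4)\setminus\Omega_0\subset K$, Ahlfors-regularity (Theorem~\ref{ARTheorem}, with $h=0$ for global minimizers) gives $\H^1(K\cap B(z,r/8))\geq C_A^{-1}r/8$. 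The set $K\cap B(z,r/4)$ has empty interior, so it is contained in $\partial\Omega_0$, and the same bound follows. Either way I obtain $\H^1(E)\geq C_0^{-1}r$ with $C_0=C_0(\A)$.

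\emph{Conclusion.} Lemma~\ref{LemmaCarleson} now provides $y\in E\cap B(z,r/8)$ and $t\in(a_1r/4,r/8)$ with $\omega_p(y,t)+\beta(y,t)\leq\varepsilon$, where $a_1=a_1(p,\A,C_0,\varepsilon)$. Lemma~\ref{LemmaStrip}, applied at $y\in\partial\Omega_0$, then shows that $\Omega_0$ contains one of the two half-disks of $B(y,t)$ lying at distance $>\varepsilon t$ from $P$. Each such half-disk contains a disk of radius $t/4$ (since $\varepsilon\leq 1/10$). This disk is contained in $B(y,t)\subset B(z,r/4)\subset B(x_0,r)$, and its radius is at least $a_1 r/16$. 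Taking $a:=\min(1/4,a_1/16)$, which depends only on $\A$, completes the proof.
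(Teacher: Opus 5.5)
Your architecture matches the paper's. You split into ``$\Omega_0$ is the only component near the crack point'', handled by Ahlfors regularity, and ``another component is nearby'', handled by unboundedness from Proposition~\ref{noiso} and slicing by circles. You then apply Lemma~\ref{LemmaCarleson} with $E\subset\partial\Omega_0$ and finish with Lemma~\ref{LemmaStrip}. Recentring at $z\in\partial\Omega_0\subset K$ is slightly cleaner than the paper, which applies Lemma~\ref{LemmaCarleson} around $x_0\notin K$.

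However, the first branch of your dichotomy has a real gap. From $B(z,r/4)\not\subset\overline{\Omega_0}$ you infer that for a.e.\ $s\in(r/8,r/4)$ the circle $\partial B(z,s)$ meets $\R^2\setminus\overline{\Omega_0}$, and this does not follow. A point $q\in B(z,r/4)\setminus\overline{\Omega_0}$ only tells you that some other component $\Omega_1$ meets $B(z,r/4)$ near $q$. Even using that $\Omega_1$ is unbounded, you only get that $\partial B(z,s)$ meets $\Omega_1$ for $s>|q-z|$. Nothing prevents $\Omega_1$ from entering $B(z,r/4)$ only in a thin layer near $\partial B(z,r/4)$. In that case the circles with $r/8<s<|q-z|$ may lie entirely in $\overline{\Omega_0}$, and your coarea bound degenerates to something of order $r/4-|q-z|$, with no uniform lower bound. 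Your second branch does not cover this configuration either, since its hypothesis $B(z,r/4)\setminus\Omega_0\subset K$ fails there.

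The missing point is that the other component must be found inside the inner radius of the annulus you slice. The paper does exactly this: another component meets $B(x_0,r/2)$, and it slices by circles of radii in $(r/2,3r/4)$. So split instead according to whether some component $\Omega_1\neq\Omega_0$ meets $B(z,r/8)$.
\begin{itemize}
\item If it does, take a path in $\Omega_1$ from such a point to infinity (Proposition~\ref{noiso}), and a path in $\Omega_0$ from a point close to $z$ to infinity. Both cross every $\partial B(z,s)$ with $s\in(r/8,r/4)$. Hence each such circle meets $\partial\Omega_0$, and your slicing bound holds.
\item If it does not, then $B(z,r/8)\setminus\Omega_0\subset K$. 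This is precisely what your Ahlfors-regularity argument on $B(z,r/8)$ actually uses.
\end{itemize}
With this correction the rest of your proof goes through: the choice of $\varepsilon$, the radius $r/4$ in Lemma~\ref{LemmaCarleson}, and the disk of radius $t/4$ in $B(y,t)\subset B(z,r/4)\subset B(x_0,r)$.
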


\begin{proof}
    Let $\Omega_0$ be a connected component of $\R^2\setminus K$, let $x_0\in\Omega_0$ and $r>0$. If $B(x_0,r/4)
    \subset\Omega_0$, then we get the thesis for $a=1/4$. Otherwise, $B(x_0,r/4) \cap
    \partial \Omega_0\neq\emptyset$. We want to show that there exists a constant $C_0 \geq 1$ (which only depends on
    $\A$) such that
    \begin{equation}\label{eq_bigdisk_cr}
        \H^1\bigl(\dd\Omega_0\cap B(x_0,3r/4)\bigr)\geq C_0^{-1} r.
    \end{equation}
    If $\Omega_0$ is the only connected component of $\R^2\setminus K$ which intersects $B(x_0,r/2)$, then $K\cap
    B(x_0,r/2) \subset \dd\Omega_0$. Indeed, as $K$ is negligible for the Lebesgue measure, the set $\R^2 \setminus
    K$ is dense so any point of $K \cap B(x_0,r/2)$ can be approximated by points of $B(x_0,r/2) \setminus K$,
    which can only belong to $\Omega_0$.
    We conclude by lower Ahlfors-regularity of $K$, that
    \begin{equation}
        \H^1\bigl(\dd\Omega_0\cap B(x_0,3r/4)\bigr) \geq \H^1\bigl(K\cap B(x_0,r/2)\bigr)\geq Cr.
    \end{equation}

    Otherwise, $B(x_0,r/2)$ meets another connected component $\Omega_1\neq\Omega_0$. Let $x_1\in\Omega_1\cap
    B(x_0,r/2)$. Since $\Omega_1$ is unbounded, there exists a curve $\gamma_1$ connecting $x_1$ to $\dd
    B(x_0,r)$ such that $\spt(\gamma_1) \subset \Omega_1$. Analogously, we can find a curve $\gamma_0$ connecting $x_0$
    to $\dd B(x_0,r)$ such that $\spt(\gamma_0) \subset\Omega_0$. Since $\spt(\gamma_0)$ and $\spt(\gamma_1)$ meet $\partial
    B(x_0,\rho)$ for every $\rho\in (r/2, 3r/4)$, then 
    \begin{equation}
        \dd B(x_0,\rho) \cap \dd \Omega_0 \neq \emptyset,\quad \forall\rho\in (r/2,3r/4).
    \end{equation}
    This implies that, setting $\pi(z)=|z-x_0|$, we have that $\pi\bigl(\dd\Omega_0\cap B(x_0,3r/4) \bigr)$
    contains an interval of length $r/4$, and therefore
    \begin{equation}
        \H^1\bigl(\dd\Omega_0\cap B(x_0,3r/4)\bigr)\geq \H^1\bigl(\pi(\dd\Omega_0\cap B(x_0,3r/4))\bigr)\geq\frac{r}{4}.
    \end{equation}
    This achieves the proof of \eqref{eq_bigdisk_cr}.

    Let $\varepsilon>0$ and $p\in(1,2]$. Applying Lemma~\ref{LemmaCarleson} with $E := \dd \Omega_0 \cap
    B(x_0,3r/4)$, we deduce that there exist $b=b(p,\A,C_0)>0$, $y\in \dd\Omega_0\cap B(x_0,3r/4)$ and $t\in
    (br,r/2)$ such that
    \begin{equation}
        \omega_p(y,t)+\beta(y,t) \leq \varepsilon.
    \end{equation}
    Furthermore, by Lemma~\ref{LemmaStrip}, $\Omega_0$ contains one of the two connected components of
    $B(y,t)\setminus \{x\in B(y,t) \;:\; \dist(x,P(y,t))<\varepsilon t\}$. Thus, $\Omega_0$ contains a disk whose
    radius is comparable with $br$, which is the thesis.
\end{proof}


From the previous proposition, we show that any point in $\Omega \setminus K$ can be connected to another point twice
further away from $K$, while the path remains at controlled distance from the crack.

\begin{lemma}
    \label{LemmaCurvaPreliminareLoc}
    There exist two constants $\kappa \geq 1$ and $\tau>0$ (which only depend on $\A$) such that if $(u,K)$ is a
    topological almost-minimizer with gauge $h$ in $\Omega$, and if $x_0\in\Omega\setminus K$ and $r > 0$ are such
    that $\mathrm{dist}(x_0,K) \geq r$,
    \begin{equation}
        B(x_0,\kappa r)\subset\Omega\quad \text{and} \quad h(\kappa r) \leq \tau,
    \end{equation}
    then there exists a curve $\gamma$ joining $x_0$ to a point $z \in \Omega \setminus K$ such that 
    \begin{equation}\label{eq_curve}
        \dist(z,K)\geq 2r,\quad \spt(\gamma) \subset(\Omega\setminus K)\cap B(x_0,\kappa r/2),\quad \dist(\gamma,K)\geq \kappa^{-1}r.
    \end{equation}
\end{lemma}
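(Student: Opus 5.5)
We argue by contradiction and compactness, using the blow-up theory of \cite{lablem}: topological almost-minimizers with small gauge converge, after rescaling, to global minimizers. Suppose the lemma fails for every choice of $\kappa$ and $\tau$. Then for each $k \in \N$, taking $\kappa = k$ and $\tau = 1/k$, there is a topological almost-minimizer $(u_k,K_k)$ with gauge $h_k$ in $\Omega_k$, a point $x_k$, and a radius $r_k$ such that $\dist(x_k,K_k) \ge r_k$, $B(x_k,kr_k) \subset \Omega_k$ and $h_k(kr_k) \le 1/k$, but no curve satisfying \eqref{eq_curve} exists with $\kappa = k$. By Remark~\ref{rmk_scaling} we normalize $x_k = 0$ and $r_k = 1$. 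Each rescaled pair is then an almost-minimizer in $B_k$ with gauge $\le 1/k$ on $(0,k]$. The uniform estimates \eqref{eq_AR1} and \eqref{eq_AR2} hold, and by \cite{lablem} the uniform concentration property holds as well. So, up to subsequences and subtracting suitable rigid motions (or piecewise rigid motions) from $u_k$, we get $K_k \to K_\infty$ in local Hausdorff distance, where $(u_\infty,K_\infty)$ is a global minimizer and $\dist(0,K_\infty) \ge 1$.

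Next we build the desired path for the limit. Let $\Omega_0$ be the component of $\R^2 \setminus K_\infty$ containing $0$. By Proposition~\ref{noiso}, $\Omega_0$ is unbounded. By Proposition~\ref{PropDisk} with radius $R$ (take $R = 3/a$, with $a$ the corkscrew constant), $\Omega_0 \cap B(0,R)$ contains a disk of radius $aR = 3$. Let $z$ be its center, so $\dist(z,K_\infty) \ge 3$. Since $\Omega_0$ is open and connected, there is a compact curve $\gamma$ in $\Omega_0$ from $0$ to $z$. It stays inside some ball $B(0,R')$ and satisfies $\dist(\gamma,K_\infty) \ge 2\eta > 0$. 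This uses no quantitative control, which is acceptable because we argue by contradiction.

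Now we transfer the path back to $K_k$. By local Hausdorff convergence, for $k$ large we have $K_k \cap \overline B(0,R'+1) \subset \{\dist(\cdot,K_\infty) < \eta\}$. Hence $\dist(\gamma,K_k) \ge \eta$ and $\dist(z,K_k) \ge 3 - \eta \ge 2$. For $k$ large we also have $R' < k/2$ and $\eta > 1/k$. So $\gamma$ satisfies \eqref{eq_curve} with $\kappa = k$, which contradicts the choice of the sequence. The constants $\kappa$ and $\tau$ obtained this way depend only on $\A$, because the limiting objects are controlled only through constants depending on $\A$.

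The main obstacle is the compactness step. We need the full strength of \cite{lablem}: the limit must be a \emph{topological} global minimizer so that Propositions~\ref{noiso} and \ref{PropDisk} apply, and the Hausdorff convergence must hold in the sense that $K_k$ has no pieces far from $K_\infty$, which is automatic from the definition of the limit. The delicate point is that $u_k$ may have no uniform bound, so convergence must be taken modulo rigid motions on each component. We rely on the uniform concentration property of \cite{lablem}, which ensures that $\H^1$ and energy pass to the limit and hence that minimality is preserved. This is exactly the blow-up result quoted before the definition of global minimizers, and the lemma reduces to invoking it.
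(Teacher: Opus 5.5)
Your proposal is correct and follows essentially the same route as the paper: contradiction with $\kappa\to\infty$ and $\tau\to 0$, then compactness from \cite{lablem} yielding a global minimizer with $\dist(0,K_\infty)\ge 1$, then Proposition~\ref{PropDisk} to find a disk of radius $3$ in the component of $0$, and finally transfer of a connecting curve back to $K_k$ via local Hausdorff convergence. The only cosmetic fix is that, to get $\dist(z,K_k)\ge 2$, you should state the Hausdorff inclusion on a slightly larger ball such as $\overline{B}(0,R'+3)$, since points of $K_k$ within distance $2$ of $z$ need not lie in $\overline{B}(0,R'+1)$.
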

\begin{proof}
    Without loss of generality, we may assume that $x_0=0$ and $r=1$. Let us suppose by contradiction that for
    any $k\in\N$, setting $\kappa=2^k$ and $\tau=2^{-k}$, there exist an open set $\Omega_k$ and a topological
    almost-minimizer $(u_k,K_k)$ in $\Omega_k$ with gauge $h_k$ such that $0\in\Omega_k\setminus K_k$,
    \begin{equation}
        \label{eqeq23}
        \dist(0,K_k) \geq 1,\quad B(0,2^k)\subset\Omega_k, \quad h_k(2^k)\leq 2^{-k},
    \end{equation}
    but a curve $\gamma$ as in the statement of the theorem does not exist. Since $B(0,2^k)\subset\Omega_k$,
    by Proposition~\ref{PropDisk} we have that $\Omega_k$ converges to $\R^2$ and thus, by \cite[Lemma 3.2]{lablem},
    up to subsequences the sequence $((u_k,K_k))_{k\in\N}$ converges to a limit $(u,K)$ with
    $\dist(0,K) = \lim_{k\to+\infty}\dist(0,K_k) \geq 1$. 
    Since $\lim_{k \to +\infty} h_k(r) = 0$ for all $r > 0$, an application of \cite[Theorem 2.7]{lablem} shows that
    $(u,K)$ is a topological almost-minimizer in $\R^2$ with gauge $h = 0$, that is, $(u,K)$ is a global
    minimizer in $\R^2$. Let $\Omega_0$ be the connected component of $\R^2\setminus K$ such that $0\in\Omega_0$. By
    Proposition~\ref{PropDisk}, there exists a ball $B(x_1,3)\subset\Omega_0$. Let $\gamma$ be a curve joining $0$
    and $x_1$ in $\Omega_0$. Since $\dist(x_1,K)>3$, since $K_k$ converges to $K$ and since $\spt(\gamma) \subset
    B(0,2\mathrm{diam}(\gamma))$, we have for $k$ sufficiently large
    \begin{equation}
        \dist(x_1,K_k)>2,\quad\dist(\gamma,K_k)>\frac{\dist(\gamma,K)}{2}>2^{-k},\quad \spt(\gamma) \subset B(0,2^{k-1}),
    \end{equation}
    which is a contradiction.
\end{proof}

\begin{remark}[Controlling the length of the curve]\label{rmk_curve}
    In the statement of Lemma~\ref{LemmaCurvaPreliminareLoc}, we only control the diameter of $\gamma$ and not
    its length. But one can actually replace $\gamma$ by a $\kappa$-Lipschitz map $\gamma : [0,r] \longrightarrow W$,
    where $W:=(\Omega\setminus K)\cap B(x_0,\kappa r/2)$,
    satisfying \eqref{eq_curve} with a potentially bigger constant $\kappa$.

    The standard construction, which we include for the reader's convenience, is as follows. Let us call by $\gamma$ a
    curve given by Lemma~\ref{LemmaCurvaPreliminareLoc} for some radius $r > 0$ such that $\mathrm{dist}(x_0,K)
    \geq r$, $B(x_0,\kappa r) \subset \Omega$ and $h(\kappa r) \leq \tau$. In the case that $\abs{z - x_0} < \kappa^{-1} r/2$, we can
    replace $\gamma$ by a direct parametrization of $[x_0,z]$; one checks easily that \eqref{eq_curve} is satisfied.
    Otherwise, we let $x_1$ be the \emph{last time}
    $\gamma$ leaves $B(x_0,\kappa^{-1} r/2)$, in particular $\abs{x_1 - x_0} = \kappa^{-1} r/2$ and
    $\mathrm{dist}(x_1,\R^2 \setminus W) \geq \kappa^{-1} r$.
    We continue the process to find points $x_0,\ldots,x_n$ such that 
    \begin{equation}
        \mathrm{dist}(x_i,\R^2 \setminus W) \geq \kappa^{-1} r,
        \quad
        \abs{x_i - x_{i-1}} = \kappa^{-1} r/2
    \end{equation}
    and
    \begin{equation}
        \abs{x_j - x_i} \geq \kappa^{-1} r /2 \quad \text{for all} \; j > i,
    \end{equation}
    until we arrive at a point $x_n$ such that $\abs{z - x_n} < \kappa^{-1} r/2$. Notice that since $\spt(\gamma)
    \subset B(x_0,\kappa r/2)$, all the points $x_i$ remain within $B(x_0,\kappa r/2)$. Given that for $i \ne j$,
    $\abs{x_j - x_i} \geq \kappa^{-1} r /2$, the balls $B(x_i,\kappa^{-1}r/4)$ are disjoint and, since they are
    contained in $B(x_0,\kappa r)$, we deduce that there are at most $C(\kappa)$ points $x_0,\ldots,x_n$, where
    $C(\kappa) \geq 1$ is a generic constant which depends on $\kappa$. 
    The chain of segments composed of $[x_i,x_{i+1}]$ for $i < n$ and of $[x_n,z]$ therefore has a total length $L$ such that
    $C(\kappa)^{-1} r \leq L \leq C(\kappa) r$. This allows to
    parametrize this chain by a $C(\kappa)$-Lipschitz map $\gamma_0 : [0,r] \longrightarrow \Omega
    \setminus K$. As $\mathrm{dist}(x_i,\R^N \setminus W) \geq \kappa^{-1} r$ and each segment is of length $\leq
    \kappa r/2$, the curve $\gamma_0$ stays at distance $\geq \kappa^{-1} r /2$ from $\R^N \setminus W$.
\end{remark}

Applying iteratively the previous Lemma, we obtain the existence of escape curves.

\begin{proposition}\label{PropJohnCurveLoc}
    There exist two constants $\alpha_J\in(0,1)$ and $\tau>0$ (which only depend on $\A$) such that if $(u,K)$ is
    a topological almost-minimizer in $\Omega$ with gauge $h$, $x_0\in\Omega\setminus K$, if $\ell > 0$ is such that
    \begin{equation}
        B(x_0,\ell) \subset \Omega \quad \text{and} \quad h(\ell) \leq \tau,
    \end{equation}
    then there exists a 1-Lipschitz curve $\gamma\colon[0,\ell]\longrightarrow\Omega\setminus K$ such that $\gamma(0)=x_0$ and
    \begin{equation}
        \label{eqeq24}
        \dist(\gamma(t),K)\geq \alpha_J t,\quad\forall t\in[0,\ell].
    \end{equation}
\end{proposition}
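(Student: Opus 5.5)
The plan is to iterate Lemma~\ref{LemmaCurvaPreliminareLoc}, in the Lipschitz form of Remark~\ref{rmk_curve}, along a geometric sequence of scales, and then rescale time. Let $\kappa,\tau$ be the constants of Lemma~\ref{LemmaCurvaPreliminareLoc} (enlarged as in Remark~\ref{rmk_curve}). Set $d_0:=\dist(x_0,K)>0$.

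\emph{Case $d_0$ large.} If $d_0\geq \ell/\kappa$, the constant curve $\gamma\equiv x_0$ (or any curve staying in $B(x_0,d_0/2)$) trivially satisfies \eqref{eqeq24} with $\alpha_J\leq \kappa^{-1}$, since $\dist(\gamma(t),K)\geq d_0/2\geq \ell/(2\kappa)\geq t/(2\kappa)$. So assume $d_0<\ell/\kappa$.

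\emph{Iteration.} Put $r_0:=d_0$ and define inductively points $z_0=x_0$, $z_{k+1}$ with $r_{k+1}:=\dist(z_{k+1},K)\geq 2r_k$. Step $k$ applies Lemma~\ref{LemmaCurvaPreliminareLoc} at $z_k$ with radius $r_k$ (using $\dist(z_k,K)\geq r_k$). This yields a $\kappa$-Lipschitz curve $\gamma_k\colon[0,r_k]\to (\Omega\setminus K)\cap B(z_k,\kappa r_k/2)$ from $z_k$ to $z_{k+1}$ with $\dist(\gamma_k,K)\geq \kappa^{-1}r_k$. We must ensure $B(z_k,\kappa r_k)\subset\Omega$ and $h(\kappa r_k)\leq\tau$. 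Since $|z_k-x_0|\leq \sum_{j<k}\kappa r_j/2$ and $r_j\geq 2 r_{j-1}$ gives $\sum_{j<k} r_j\leq r_{k}$ roughly, we get $B(z_k,\kappa r_k)\subset B(x_0,2\kappa r_k)$. Hence we stop at the first index $N$ with $2\kappa r_N>\ell$. Until then $B(z_k,\kappa r_k)\subset B(x_0,\ell)\subset\Omega$, and since $h$ is nondecreasing, $h(\kappa r_k)\leq h(\ell)\leq\tau$. The difficulty is that $r_{k+1}$ could be much bigger than $2r_k$. I will handle this by using $\dist(z_{k+1},K)\geq 2r_k$ only, i.e.\ by defining $r_{k+1}:=2r_k$ (the lemma only needs a lower bound on the distance), so that $r_k=2^kd_0$ exactly. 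At the final index, the point $z_N$ satisfies $\dist(z_N,K)\geq r_N\geq \ell/(4\kappa)$, and we stop there.

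\emph{Concatenation and escape estimate.} Concatenate $\gamma_0,\dots,\gamma_{N-1}$. The total time is $T=\sum_{k<N}r_k\leq r_N\lesssim \ell$, and the curve is $\kappa$-Lipschitz. For a time $s$ in the $k$-th piece we have $s\leq \sum_{j\leq k}r_j\leq 2r_k$ and $\dist\geq \kappa^{-1}r_k\geq s/(2\kappa)$. Reparametrize by $t=\kappa s$ to obtain a $1$-Lipschitz curve on $[0,\kappa T]$ with $\dist(\gamma(t),K)\geq t/(2\kappa^2)$. If $\kappa T<\ell$, extend by a constant at $z_N$. For $t\in[\kappa T,\ell]$ we then have $\dist\geq r_N\geq \ell/(4\kappa)\geq t/(4\kappa)$. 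If $\kappa T>\ell$, simply restrict to $[0,\ell]$. In all cases this gives $\alpha_J=(4\kappa^2)^{-1}$; to get a genuinely moving curve one could instead append segments as in Lemma~\ref{LemmaJohnSegment}, but a constant tail is admissible.

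\emph{Main obstacle.} The main obstacle is the bookkeeping ensuring that every application of Lemma~\ref{LemmaCurvaPreliminareLoc} is legitimate, i.e.\ $B(z_k,\kappa r_k)\subset B(x_0,\ell)$. This relies on the geometric growth of the $r_k$ controlling the drift $|z_k-x_0|$, and on the constant margin lost in the stopping rule. One must also check that the restriction $\gamma\subset\Omega\setminus K$ holds, which follows from the lemma.
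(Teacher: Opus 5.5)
Your proposal is correct and follows the paper's argument. You iterate Lemma~\ref{LemmaCurvaPreliminareLoc} (in the Lipschitz form of Remark~\ref{rmk_curve}) with radii exactly $2^k\dist(x_0,K)$, control the drift $|z_k-x_0|$ by the geometric sum to justify each application, concatenate, and reparametrize in time. The only difference is cosmetic: the paper slows the concatenated curve down by a factor $8\kappa$ so that its domain already covers $[0,\ell]$ and then restricts, whereas you slow down by $\kappa$ and pad with a constant tail at $z_N$, which works equally well since $\dist(z_N,K)\geq r_N>\ell/(2\kappa)$.
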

\begin{proof}
    Fix a point $x_0 \in \Omega \setminus K$ and radius $r = \mathrm{dist}(x_0,K)$. Let $n$ be the integer such that
    $(2^n - 1) \kappa r \leq \ell < (2^{n+1} - 1) \kappa r$.
    If $n = 0$, then $\ell \leq \kappa r$, that is,
    \begin{equation}
        \mathrm{dist}(x_0,K) \geq \kappa^{-1} \ell.
    \end{equation}
    It is then clear in this case that the constant path $\gamma \equiv x_0$ on $[0,\ell]$ satisfies \eqref{eqeq24}.

    Let us now assume that $n \geq 1$. We have in particular $\kappa r \leq \ell$ and we can thus apply
    Lemma~\ref{LemmaCurvaPreliminareLoc} and Remark~\ref{rmk_curve} to find a point $x_1 \in \Omega \setminus K$ such that $\mathrm{dist}(x_1,K) \geq 2r$ and
    a $\kappa$-Lipschitz map $\gamma_0 : [0,r] \longrightarrow \Omega \setminus K$ which connects $x_0$ to $x_1$ and stays at distance $\geq \kappa^{-1} r$ from $K$. 
    We iterate the process and obtain a finite sequence of points $(x_k)_k$ such that $x_k$ is at distance $\geq 2^k r$
    from $K$
    and $x_k$ is connected to $x_{k+1}$ by a $\kappa$-Lipschitz map $\gamma_k : [0,2^k r] \longrightarrow \Omega
    \setminus K$ such that $\mathrm{dist}(\gamma_k,K) \geq 2^k \kappa^{-1} r$.
    Notice that by construction $\abs{x_{k+1} - x_k} \leq 2^k \kappa r$ so that $\abs{x_k - x_0} \leq \sum_{i < k} 2^i
    \kappa r = (2^k - 1) \kappa
    r$. One can apply Lemma~\ref{LemmaCurvaPreliminareLoc} to connect $x_k$ to $x_{k+1}$ as long as
    \begin{equation}
        B(x_k,2^k \kappa r) \subset \Omega \quad \text{and} \quad h(2^k \kappa r) \leq \tau
    \end{equation}
    and, since $B(x_k,2^k \kappa r) \subset B(x_0,(2^{k+1} - 1) \kappa r)$, this is guaranteed whenever $(2^{k+1} - 1) \kappa r
    \leq \ell$; hence the process stops at $k = n$.

    For each $k = 0,\ldots,n-1$, we translate the domain of
    $\gamma_k$ to be $[(2^k - 1)r, (2^{k+1}-1)r]$ and we concatenate all the $\gamma_k$ to find a $\kappa$-Lipschitz map
    $\gamma : [0,(2^n - 1)r] \longrightarrow \Omega\setminus K$ connecting $x_0$ to $x_n$. Moreover, for all $t \in [0,(2^n - 1)r]$, there exists $k = 0,\ldots,n-1$ such that $t \in [(2^k-1)r,(2^{k+1}-1)r]$ so
    \begin{equation}
        \mathrm{dist}(\gamma(t),K) \geq 2^k \kappa^{-1} r \geq 2^k \kappa^{-1} \Bigl(\frac{t}{2^{k+1}-1}\Bigr) \geq
        \frac{\kappa^{-1} t}{2}.
    \end{equation}
    Observe that by definition of $n$, we have $\ell < 8(2^n - 1) \kappa r$. We can replace $\gamma$ by $\tilde \gamma : t
    \mapsto \gamma(\kappa^{-1}t/8)$, in order to obtain a $1$-Lipschitz map defined on $[0,8(2^n - 1)\kappa r]$, and in
    particular on $[0,\ell]$ by restriction; this reparametrization preserves the John condition with only a worse constant.
\end{proof} 

\section{Morrey-Campanato type estimates for the elastic energy}
\label{section5}

We prove Theorem~\ref{main0}, stated in introduction (see Theorem~\ref{holderestim} below). This establishes in
particular a Hölder-type estimate, which requires the development of Morrey-Campanato theory for the symmetric
gradient.


\subsection{Oscillation control with the symmetric gradient}
\label{SubsectionOscillation}
\newcommand{\Ce}{C_*}

Throughout this subsection, we assume that $\Omega \subset \R^2$ is an open set, $u \in H^1_{loc}(\Omega;\R^2)$ is a
Sobolev vector field, and there exists $\Ce>0$ such that for all $B(x,r)\subset \Omega$ it holds 
\begin{equation}
    \label{eqeq17}
    \int_{B(x,r)} |e(u)|^2 \, dx \leq \Ce r.
\end{equation}
We will prove that, up to removing a suitable global rigid movement in $\Omega$, the displacement $u$ satisfies
Hölder estimates, thus providing an analog of Morrey-Campanato theory for the symmetric gradient.

We begin with the following lemma, which controls the difference between two infinitesimal rotations in nearby balls of
comparable radii.


\begin{lemma}\label{LemmaMovRig}
    Let $\Omega\subset\R^2$ be an open set and let $u\in H^1_{\mathrm{loc}}(\Omega;\R^2)$ be a map satisfying \eqref{eqeq17}.
    Let $B(x_1,r_1),B(x_2,r_2)\subset\Omega$ with $r_1,r_2>0$ and assume that there exists a constant $M>0$ and a ball $V\subset B(x_1,r_1)\cap B(x_2,r_2)$ such that $B(x_1,r_1)\cup B(x_2,r_2)\subset MV$.
    Then
    \begin{equation}
        |A_{B(x_1,r_1)}-A_{B(x_2,r_2)}|\leq C \max(r_1,r_2)^{-1/2},
    \end{equation}
    for some constant $C=C(\Ce,M) \geq 1$.
\end{lemma}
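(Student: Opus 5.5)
The plan is to compare both infinitesimal rotations with the rotation on the common ball $V$, and to control each of the two differences with the second Korn inequality \eqref{Korn2} together with the energy bound \eqref{eqeq17}.

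\emph{Setup.} Write $V=B(z,\rho)$, $B_j:=B(x_j,r_j)$, $A_j:=A_{B_j}$, and $R:=\max(r_1,r_2)$. Since $V\subset B_j$ we have $\rho\le r_j$, and since $B_j\subset MV$ we have $r_j\le M\rho$. Hence $\rho\le r_j\le M\rho$ and $|V|\ge M^{-2}|B_j|$: all radii are comparable, with constants depending only on $M$. By the triangle inequality it suffices to bound $|A_j-A_V|$ for $j=1,2$, where $A_V:=\fint_V \frac{\D u-\D u^T}{2}\,dx$.

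\emph{Step 1 (Korn on each ball).} Apply \eqref{Korn2} on $B_j$ with the explicit choice of skew matrix $A(u)=A_j$, i.e.\ with $\omega=B_j$. This gives
\[
\int_{B_j}|\D u-A_j|\,dx\le C\int_{B_j}|e(u)|\,dx .
\]
The constant is scale invariant because $\omega$ is the whole ball, so $C$ is universal. By Cauchy--Schwarz and \eqref{eqeq17},
\[
\int_{B_j}|e(u)|\,dx\le |B_j|^{1/2}\bigl(\Ce r_j\bigr)^{1/2}\le C\,\Ce^{1/2}\,r_j^{3/2}.
\]

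\emph{Step 2 (compare with $V$).} Since $A_j$ is skew-symmetric, $A_V-A_j=\fint_V\bigl(\frac{\D u-\D u^T}{2}-A_j\bigr)\,dx$. The skew part of a matrix has norm at most the norm of the matrix, so
\[
|A_V-A_j|\le \fint_V|\D u-A_j|\,dx\le \frac{1}{|V|}\int_{B_j}|\D u-A_j|\,dx\le \frac{C M^2}{r_j^2}\,\Ce^{1/2}r_j^{3/2}=C(M,\Ce)\,r_j^{-1/2}.
\]

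\emph{Conclusion.} Since $r_j\ge\rho\ge R/M$, we get $r_j^{-1/2}\le M^{1/2}R^{-1/2}$. Summing the bounds for $j=1,2$ yields the claim.

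The only delicate point is making sure the Korn constant does not depend on the relative position of the balls. Taking $\omega$ equal to the whole ball in \eqref{Korn2} settles this, since the constant is then universal by scaling. All dependence on the geometry enters only through the volume ratio $|B_j|/|V|\le M^2$, which is where the assumption on $M$ is used.
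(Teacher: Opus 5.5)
Your argument follows the paper's. You compare both rotations on the common ball $V$; the paper writes $|A_{B_1}-A_{B_2}|^2=\fint_V|A_{B_1}-A_{B_2}|^2$ and splits through $\D u$, which is equivalent to your detour through $A_V$. You then apply Korn on each $B_j$ with the ball-average skew matrix, so the constant is scale-free, and finish with \eqref{eqeq17} and the volume ratio $|B_j|/|V|\le M^2$. Your bookkeeping of the radii ($\rho\le r_j\le M\rho$, $r_j^{-1/2}\le M^{1/2}\max(r_1,r_2)^{-1/2}$) is correct, and so is the projection bound $|A_V-A_j|\le\fint_V|\D u-A_j|\,dx$.

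Step 1 fails as written, because it uses the second Korn inequality with $L^1$ norms. The paper displays \eqref{Korn2} that way, but the inequality is false. Suppose $\int_B|\D u-A|\,dx\le C\int_B|e(u)|\,dx$ held on a ball for some constant matrix $A$. Take $u\in C^\infty_c(B;\R^2)$; then $\int_B\D u\,dx=0$ gives $|B|\,|A|\le\int_B|\D u-A|\,dx$. Hence $\int_B|\D u|\,dx\le 2C\int_B|e(u)|\,dx$, which contradicts Ornstein's non-inequality (see Conti, Faraco and Maggi for the Korn case specifically). The full-gradient Korn inequality holds for $1<p<\infty$ but not for $p=1$. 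The paper's proof of this lemma in fact uses the $L^2$ form $\int_{B_j}|\D u-A_j|^2\,dx\le C\int_{B_j}|e(u)|^2\,dx$, whose constant is universal on balls by scaling. Replace Step 1 by this and use Cauchy--Schwarz on $V$ in Step 2:
\[
|A_V-A_j|\le\Bigl(\fint_V|\D u-A_j|^2\,dx\Bigr)^{1/2}\le M\Bigl(\fint_{B_j}|\D u-A_j|^2\,dx\Bigr)^{1/2}\le C\Bigl(\frac{\Ce r_j}{r_j^2}\Bigr)^{1/2}=C(M,\Ce)\,r_j^{-1/2}.
\]
Your conclusion then goes through unchanged.
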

\begin{proof}
    Let $r$ denote the radius of $V$. Observe that as $V\subset B(x_1,r_1)\cap B(x_2,r_2)$ and $B(x_1,r_1)\cup B(x_2,r_2)\subset MV$, we have
    \begin{equation}
        r \leq \min(r_1,r_2) \leq \max(r_1,r_2) \leq M r,
    \end{equation}
    so all the radii $r, r_1, r_2$ are comparable, up to a multiplicative constant which depends only on $M$.
    In the following, $C$ denotes a generic constant which may depend on $\Ce$ and $M$.
    Using the triangle inequality, the inclusion $V \subset B(x_1,r_1) \cap B(x_2,r_2)$, Korn's inequality and \eqref{eqeq17}, we obtain
    \begin{align*}
        |A_{B(x_1,r_1)}-A_{B(x_2,r_2)}|^2 &= \fint_{V} |A_{B(x_1,r_1)}-A_{B(x_2,r_2)}|^2 \, dx\\
                                          &\leq C \fint_{V} |A_{B(x_1,r_1)}-\nabla u|^2 \, dx + C \fint_{V} |A_{B(x_2,r_2)}-\nabla u|^2 \, dx\\
                                          &\leq C \fint_{B(x_1,r_1)} |A_{B(x_1,r_1)}-\nabla u|^2 \, dx + C \fint_{B(x_2,r_2)} |A_{B(x_2,r_2)}-\nabla u|^2 \, dx\\
                                          &\leq C \fint_{B(x_1,r_1)} \abs{e(u)}^2 \, dx + C \fint_{B(x_2,r_2)} \abs{e(u)}^2 \, dx\\
                                          &\leq C (r_1^{-1} + r_2^{-1}) \leq C \max(r_1,r_2)^{-1}.
    \end{align*}
\end{proof}

We will also require the case of two concentric balls with possibly non-comparable radii. This follows from an iterative
application of the preceding argument.

\begin{lemma}\label{LemmaMovRig2}
    Let $\Omega\subset\R^2$ be an open set and let $u\in H^1_{\mathrm{loc}}(\Omega;\R^2)$ be a map satisfying \eqref{eqeq17}.
    Let $B(x,r)\subset\Omega$ and let  $\rho <r$.    Then
    \begin{equation}
        |A_{B(x,\rho )}-A_{B(x,r)}|\leq C \rho^{-1/2},
    \end{equation}
    for some constant $C$ depending only on $\Ce$.
\end{lemma}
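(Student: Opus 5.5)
The plan is a dyadic telescoping argument, with Lemma~\ref{LemmaMovRig} applied to each pair of consecutive concentric balls. Set $r_k := 2^{-k} r$ for $k \in \N$ and let $n \geq 0$ be the integer with $r_{n+1} < \rho \leq r_n$. The chain of balls is
\[
B(x,r_0) \supset B(x,r_1) \supset \cdots \supset B(x,r_n) \supset B(x,\rho),
\]
all of them contained in $B(x,r) \subset \Omega$.

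First I check that each consecutive pair satisfies the hypothesis of Lemma~\ref{LemmaMovRig} with a uniform constant $M$. For $B(x,r_k)$ and $B(x,r_{k+1})$, take $V = B(x,r_{k+1})$. Then $V$ lies in the intersection, and the union $B(x,r_k)$ equals $2V$, so $M = 2$ works. For the last pair $B(x,r_n)$ and $B(x,\rho)$, take $V = B(x,\rho)$. Since $r_n < 2\rho$, we get $B(x,r_n) \subset 2V$, so again $M = 2$. Lemma~\ref{LemmaMovRig} then gives
\[
|A_{B(x,r_k)} - A_{B(x,r_{k+1})}| \leq C\, r_k^{-1/2}
\quad\text{and}\quad
|A_{B(x,r_n)} - A_{B(x,\rho)}| \leq C\, r_n^{-1/2} \leq C \rho^{-1/2},
\]
where $C$ depends only on $\Ce$, because $M = 2$ is fixed.

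Next I sum with the triangle inequality:
\[
|A_{B(x,\rho)} - A_{B(x,r)}| \leq C \sum_{k=0}^{n} r_k^{-1/2} = C\, r_n^{-1/2} \sum_{j=0}^{n} 2^{-j/2} \leq \frac{C}{1 - 2^{-1/2}}\, r_n^{-1/2}.
\]
Finally, $r_n \geq \rho$ gives $r_n^{-1/2} \leq \rho^{-1/2}$, which is the claim.

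The only delicate point is that the bound does not deteriorate with the number $n$ of steps, which is unbounded as $\rho/r \to 0$. This holds because the increments $r_k^{-1/2}$ form a geometric series dominated by its smallest-scale term, and that term is comparable to $\rho^{-1/2}$. So no real obstacle remains once the comparable-radii hypothesis is verified with the fixed constant $M = 2$.
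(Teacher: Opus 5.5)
Your proof is correct and follows the paper's argument: the same dyadic radii $r_k = 2^{-k} r$, the same final index with $r_{n+1} < \rho \le r_n$, and the same geometric-series summation dominated by the smallest-scale term. The only difference is cosmetic. You invoke Lemma~\ref{LemmaMovRig} with $M=2$ for each consecutive pair, while the paper re-derives that estimate inline from Korn's inequality and \eqref{eqeq17}.
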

\begin{proof} Let $r_k := 2^{-k}r$ for $k\geq 0$ and let $A_k:=A_{B(x,r_k)}$. We estimate 
    \begin{align*}
        |A_{k}-A_{k+1}|^2 &= \fint_{B_{k+1}} |A_{k}-A_{k+1}|^2 \, dy\\
                          &\leq C  \fint_{B_{k+1}} |A_{B_k}-\nabla u|^2 \, dy + C \fint_{B_{k+1}} |A_{B_{k+1}}-\nabla u|^2 \, dy\\
                          &\leq C  \fint_{B_{k}} |A_{B_k}-\nabla u|^2 \, dy + C \fint_{B_{k+1}} |A_{B_{k+1}}-\nabla u|^2 \, dy\\
                          &\leq C \fint_{B_k} \abs{e(u)}^2 \, dy + C \fint_{B_{k+1}} \abs{e(u)}^2 \, dy\\
                          &\leq C  r_k^{-1}.
    \end{align*}
    If $k_0$ is chosen such that $r_{k_0+1}< \rho \leq r_{k_0}$ we also have, using the same argument,  that 
    \[|A_{k_0}-A_{B(x,\rho)}|\leq Cr_{k_0}^{-\frac{1}{2}}\leq C \rho^{-\frac{1}{2}}.\]
    Finally,
    \[|A_{B(x,\rho)} - A_{B(x,r)}|\leq |A_{B(x,\rho)} - A_{k_0}| + \sum_{k=0}^{k_0-1} |A_k - A_{k+1}| \leq C
    \rho^{-\frac{1}{2}},\]
    which finishes the proof.
\end{proof}

Then, we show that in a John domain where \eqref{eqeq17} holds, a single infinitesimal rotation in $\Omega$ controls
all other infinitesimal rotations, in any arbitrary ball.

\begin{lemma}
    \label{LemmaMovRig3}
    Let $\Omega \subset \R^2$ be a John domain with constant $\alpha_J$ and center $x_0$.
    Let $u \in H^1_{\mathrm{loc}}(\Omega;\R^2)$ be a map satisfying \eqref{eqeq17}.
    Let $r_\Omega:=\dist(x_0,\Omega^c)$.  Then, for all $x\in \Omega$ and  $\rho\geq 0$ such that $B(x,\rho)\subset\Omega$, we have
    \begin{equation}
        |A_{B(x,\rho)} - A_{B(x_0, r_\Omega)}|\leq C\rho^{-\frac{1}{2}}, \label{mainESTIM1}
    \end{equation}
    where   $C=C(\Ce,\alpha_J)$.
\end{lemma}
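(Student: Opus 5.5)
The plan is to connect $B(x,\rho)$ to $B(x_0,r_\Omega)$ through the chain of balls of Proposition~\ref{Chain}, built along a John curve from $x$ to $x_0$. Then we sum the estimates of Lemma~\ref{LemmaMovRig} along the chain; the geometric decay of the radii makes the sum of $r_i^{-1/2}$ dominated by its smallest term. Since $B(x,\rho)\subset\Omega$, we have $\rho\le \dist(x,\Omega^c)$, so the case distinction is whether $\rho$ is small or large compared with the length $\ell$ of the John curve.

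First, let $\gamma\colon[0,\ell]\to\Omega$ be a John curve from $x$ to $x_0$, and let $(D_i)_{i\in\N}$, $D_i=B(x_i,r_i)$ with $r_i=\frac{\alpha_J}{4}q^i\ell$, be the chain given by Proposition~\ref{Chain}. The constants $M,q$ depend only on $\alpha_J$. Applying Lemma~\ref{LemmaMovRig} to consecutive balls (using the balls $V_i$) gives $|A_{D_i}-A_{D_{i+1}}|\le C r_i^{-1/2}$. Next we compare $D_0=B(x_0,\alpha_J\ell/4)$ with $B(x_0,r_\Omega)$. These are concentric, and $\alpha_J\ell/4\le r_\Omega$ by Remark~\ref{boundl}, so Lemma~\ref{LemmaMovRig2} gives $|A_{D_0}-A_{B(x_0,r_\Omega)}|\le C(\alpha_J\ell)^{-1/2}$.

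Second, choose $k\ge0$ to be the smallest index with $r_k\le \rho$, or $k=0$ if $\rho\ge r_0$. Summing the chain estimates gives
\[|A_{D_k}-A_{B(x_0,r_\Omega)}|\le C\sum_{i<k} r_i^{-1/2}+C(\alpha_J\ell)^{-1/2}\le C r_{k}^{-1/2},\]
which is a geometric series dominated by its last term. Since $r_{k-1}>\rho$, we get $r_k = q r_{k-1}>q\rho$ when $k\ge1$, so $r_k^{-1/2}\le C\rho^{-1/2}$. When $k=0$, we have $\rho\ge r_0$, and then $(\alpha_J\ell)^{-1/2}\le C\rho^{-1/2}$ provided $\rho\lesssim \ell$. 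This holds because $\rho\le\dist(x,\Omega^c)\le |x-x_0|+r_\Omega\le C\ell$; indeed $r_\Omega\le\mathrm{diam}(\Omega)\le C\ell$ would require an argument, so instead use $\rho \le \mathrm{diam}(\Omega)/2$ and Remark~\ref{boundl}, which gives $r_\Omega\ge \frac{\alpha_J}{2}\mathrm{diam}(\Omega)$. For this case it is cleaner to compare $B(x,\rho)$ directly to $B(x_0,r_\Omega)$, which are comparable in size.

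Third, compare $B(x,\rho)$ with $D_k$. By \eqref{eqeq13}, $|x_k-x|\le q^k\ell = 4r_k/\alpha_J$, and $r_k\le\rho$ with $r_k\ge q\rho$. Consider the intermediate ball $B(x,\rho')$ with $\rho'=\min(\rho, \dist(x,\Omega^c))$ comparable to $r_k$. We then need a ball $V$ inside both $B(x,\rho)$ (or a concentric shrink of it, handled via Lemma~\ref{LemmaMovRig2}) and a suitable enlargement of $D_k$, whose $M$-dilate covers both and stays in $\Omega$. The escape condition $\dist(x_k,\Omega^c)\ge\alpha_J q^k\ell$ allows enlarging $D_k$ to $B(x_k,\alpha_J q^k\ell)$ inside $\Omega$. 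Moving along the chain to a radius $\approx\rho$ should then overlap $B(x,\rho)$ substantially, because both have radius $\sim\rho$ and centers at distance $\lesssim\rho/\alpha_J$. If the overlap fails, I would use a few intermediate balls along $\gamma$ near $x$, as in the second assertion of Lemma~\ref{LemmaJohnSegment}.

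The main obstacle is this last matching step: making the chain ball of radius $\sim\rho$ genuinely overlap $B(x,\rho)$ in the sense required by Lemma~\ref{LemmaMovRig}, with constant $M$ depending only on $\alpha_J$. The difficulty is that the chain is centered near $x$ only to within $q^k\ell$, which is comparable to but possibly larger than $\rho$. I would first try to go down the chain to index $k'$ with $q^{k'}\ell\le\rho/2$, so that $x_{k'}\in B(x,\rho/2)$. Then $V=D_{k'}$ if $D_{k'}\subset B(x,\rho)$, or else a small ball around $x_{k'}$ of radius $\sim\rho$. Its $M$-dilate contains $B(x,\rho)$, and lies in $\Omega$ after first replacing $B(x,\rho)$ by $B(x,\rho/2)$ via Lemma~\ref{LemmaMovRig2}.
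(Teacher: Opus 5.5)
Your treatment of the regime $\rho\lesssim\ell$ matches the paper's first case. It uses the chain of Proposition~\ref{Chain}, Lemma~\ref{LemmaMovRig} on consecutive balls, the geometric sum, and Lemma~\ref{LemmaMovRig2} for the concentric pair $D_0$, $B(x_0,r_\Omega)$. As in your last paragraph, you then descend to the index $k$ with $q^{k}\ell\le\rho/2<q^{k-1}\ell$; this gives $x_k\in B(x,\rho/2)$, so $D_k\subset B(x,\rho)$ can serve as $V$. Note that Lemma~\ref{LemmaMovRig} as stated only needs the two balls to lie in $\Omega$, not $MV$, so no shrinking is needed in that step.

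The gap is the regime $\rho\gg\ell$, and it genuinely occurs. Take $\Omega=B(0,1)$, $x_0=0$, $x=(\varepsilon,0)$ and the straight John curve: then $\ell=\varepsilon$, while $\rho=\sqrt{\varepsilon}$ is admissible, so $\ell\ll\rho\ll r_\Omega$.

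\begin{itemize}
\item Every chain ball has radius at most $\alpha_J\ell/4\ll\rho$, and the bound $C(\alpha_J\ell)^{-1/2}$ inherited from $D_0$ is far worse than $\rho^{-1/2}$.
\item The inequality $r_\Omega\le C\ell$ that you considered is false.
\item Your fallback of comparing $B(x,\rho)$ directly with $B(x_0,r_\Omega)$, as balls ``comparable in size'', also fails: Lemma~\ref{LemmaMovRig} would force $r_\Omega\le M\rho$.
\end{itemize}

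The missing step is to insert a ball centred at $x_0$ whose radius is of order $\rho$, not $r_\Omega$. Suppose $\rho\ge 2\ell$. Then $|x-x_0|\le\ell\le\rho/2$, so $V:=B(x_0,\rho/2)\subset B(x,\rho)\subset\Omega$ and $B(x,\rho)\subset 3V$. Lemma~\ref{LemmaMovRig} with $M=3$ gives $|A_{B(x,\rho)}-A_{B(x_0,\rho/2)}|\le C\rho^{-1/2}$. Since $V\subset\Omega$ forces $\rho/2\le r_\Omega$, Lemma~\ref{LemmaMovRig2} gives $|A_{B(x_0,\rho/2)}-A_{B(x_0,r_\Omega)}|\le C\rho^{-1/2}$.

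This is a slightly streamlined form of the paper's second case. There the paper compares $B(x,\rho)$ with $B(x_0,\rho)$ under the assumption $B(x,2\rho)\subset\Omega$, and removes that assumption by passing to $B(x,\rho/2)$. Combined with your chain argument for $\rho<2\ell$, this closes the proof.
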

\begin{proof}
    Let    $x\in \Omega$ and  $\rho  \geq 0$ be such that $B(x,2\rho)\subset\Omega$ and let $\gamma : [0,\ell]
    \longrightarrow \Omega$ be a John curve from $x$ to $x_0$. Let $(D_i)_{i\in\N}$ be the chain of balls with centers $x_i$ and radii $r_i$ defined in Proposition~\ref{Chain}.
    Assume first that $\rho \in (0,\ell)$. Then there exists a unique $k\in\N^*$  such that $\rho\in[q^k\ell,q^{k-1}\ell)$.
    By \eqref{eqeq13} we have
    \begin{equation}
        |x_k-x|\leq q^k\ell\leq\rho,
    \end{equation}
    so that $x_k \in B(x,\rho)$.
    Since $r_k$ and $\rho$ are comparable, reasoning as in Proposition~\ref{Chain}, it is possible to find a ball $V\subset D_k \cap B(x,\rho)$ such that $D_k \cap B(x,\rho)\subset MV\subset \Omega$.
    Thus, applying Lemma~\ref{LemmaMovRig} and recalling that $(r_i)_{i\in\N}$ is a geometric sequence of ratio $q \in (0,1)$, we infer that
    \begin{align}
        |A_{B(x,\rho)} - A_{D_0}| &\leq |A_{B(x,\rho)}-A_{D_k}|+\sum_{i=0}^{k-1} |A_{D_{i+1}}-A_{D_i}|\\
                                  &\leq C \rho^{-\frac{1}{2}} + C \sum_{i=0}^{k-1} r_i^{-1/2}\\
                                  &\leq C \rho^{-\frac{1}{2}} + C r_k^{-1/2} \leq C \rho^{-1/2}.
    \end{align}
    Here the constant depends on $\Ce$ and $M$, and thus on $\Ce$ and $\alpha_J$.  Moreover, 
    \[r_\Omega=\dist(x_0,\Omega^c) =\dist(\gamma(\ell),\Omega^c)\geq \alpha_J \ell,\]
    from which we deduce that $r_0:=\alpha_J \ell /2 \leq  r_\Omega $. Applying Lemma \ref{LemmaMovRig} we get
    \[|A_{D_0} - A_{B(x_0,r_\Omega)} | \leq C r_0^{-\frac{1}{2}}\leq C\rho^{-\frac{1}{2}},\]
    so that finally 
    \[|A_{B(x,\rho)} - A_{B(x_0,r_\Omega)}| \leq |A_{B(x,\rho)} - A_{D_0}| +|A_{D_0} - A_{B(x_0,r_\Omega)} | \leq
    C\rho^{-\frac{1}{2}},\]
    which proves the lemma in the case when $\rho \in (0,\ell)$.

    It remains to treat the case when  $\rho \geq \ell$.  First assume that $B(x,2\rho)\subset \Omega$. We notice that  since $\gamma$ is $1$-Lipschitz,
    \[|x-x_0|=|\gamma(0)-\gamma(\ell)|\leq \ell \leq \rho.\]
 This means that $x_0 \in B(x,\rho)$, from which we directly get the  existence of  a ball of radius $\rho/4$ in $B(x,\rho)\cap B(x_0,\rho)$. On the other hand  $B(x,\rho)\cup B(x_0,\rho)$ is contained in $B(x,2\rho)\subset \Omega$ thus  Lemma \ref{LemmaMovRig} applies with $M=8$ and  says  that 
    \[|A_{B(x,\rho)}-A_{B(x_0,\rho)}|\leq C\rho^{-\frac{1}{2}},\]
    where $C$ depends only on $C^*$ and $\alpha_J$.  Moreover, since $B(x_0,\rho) \subset B(x,2\rho)\subset \Omega$, we  must have  $\rho \leq r_\Omega$.  Then we can use Lemma \ref{LemmaMovRig2} which yields
      \[|A_{B(x_0,\rho)}-A_{B(x_0,r_\Omega)}|\leq C\rho^{-\frac{1}{2}},\]
    which all together proves that 
    \[|A_{B(x,\rho)}-A_{B(x_0,r_\Omega)}|\leq C\rho^{-\frac{1}{2}},\]
    which finishes the proof in the case $\rho \geq \ell$ and when $B(x,2\rho)\subset \Omega$. Finally if $B(x,2\rho)$ is not contained in $\Omega$ we can apply all the preceding to the ball $B(x,\rho/2)$, yielding 
    \[|A_{B(x,\rho/2)}-A_{B(x_0,r_\Omega)}|\leq C\rho^{-\frac{1}{2}},\]
  and then use Lemma   \ref{LemmaMovRig2} to control    $|A_{B(x,\rho/2)}-A_{B(x,\rho)}|$ and conclude with the triangle inequality.
\end{proof}

As a consequence of the above result we can prove a Morrey type control on the full gradient for  $u-A_{B(x_0,r_\Omega)}$. Here is the statement.
 
\begin{lemma}
    \label{morraycontrol1}
    Let $\Omega \subset \R^2$ be a John domain with constant $\alpha_J$ and center $x_0$, and let $u \in H^1_{\mathrm{loc}}(\Omega;\R^2)$ be a map satisfying \eqref{eqeq17}.    Define  $r_\Omega:= \dist(x_0,\Omega^c)$ and consider the function  $v$ defined by    
    \[v(x):=u(x)-A_{B(x_0,r_\Omega)} x, \quad  \forall x \in \Omega.\] 
    Then, for all $r  \geq 0$ and $x\in \Omega$ such that $B(x,r)\subset\Omega$, we have
    \[\int_{B(x,r)}|\nabla v|^2 \;dy \leq C r,\]
    with $C=C( C^*,\alpha_J)$.
\end{lemma}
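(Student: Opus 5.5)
Since $\nabla v = \nabla u - A_{B(x_0,r_\Omega)}$ and $A_{B(x_0,r_\Omega)}$ is skew-symmetric, we have $e(v)=e(u)$ in $\Omega$. In particular $v$ still satisfies \eqref{eqeq17} with the same constant $\Ce$, and the infinitesimal rotations of $v$ are $A^v_{B} = A_B - A_{B(x_0,r_\Omega)}$ for every ball $B\subset\Omega$. The plan is to split $\nabla v$ on a ball $B(x,r)\subset\Omega$ into its deviation from the local rotation plus the local rotation itself, and to control each piece separately.

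Fix $x\in\Omega$ and $r>0$ with $B(x,r)\subset\Omega$; the case $r=0$ is trivial. We write
\begin{equation}
    \nabla v = \bigl(\nabla u - A_{B(x,r)}\bigr) + \bigl(A_{B(x,r)} - A_{B(x_0,r_\Omega)}\bigr) \quad \text{in } B(x,r),
\end{equation}
so that
\begin{equation}
    \int_{B(x,r)}|\nabla v|^2\,dy \leq 2\int_{B(x,r)}|\nabla u - A_{B(x,r)}|^2\,dy + 2\,|B(x,r)|\,\bigl|A_{B(x,r)} - A_{B(x_0,r_\Omega)}\bigr|^2 .
\end{equation}

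For the first term we use the $L^2$ Korn inequality on the ball, namely \eqref{Korn2} in its $p=2$ version (Proposition~\ref{kornJohn} applied to a ball), with the skew matrix chosen as the average $A_{B(x,r)}$. The constant in this inequality is scale invariant. Together with \eqref{eqeq17} this bounds the first term by $C\int_{B(x,r)}|e(u)|^2\,dy \leq C\Ce r$.

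For the second term we apply Lemma~\ref{LemmaMovRig3} with $\rho=r$, which gives $|A_{B(x,r)}-A_{B(x_0,r_\Omega)}|\leq C r^{-1/2}$. Squaring and multiplying by $|B(x,r)|=\pi r^2$ bounds the second term by $C r$. This is exactly the order allowed, because the decay $\rho^{-1/2}$ of the rotation difference is sharp for this scaling. Combining the two bounds gives $\int_{B(x,r)}|\nabla v|^2\,dy\leq C r$ with $C=C(\Ce,\alpha_J)$.

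The only point needing care is that Korn's inequality with the average skew part as the choice of $A$ holds with a constant independent of $r$. This follows by rescaling to the unit ball, using the remark after \eqref{Korn2} that the average over the whole ball ($\omega = B(x,r)$) is an admissible choice. I do not expect any real obstacle; the substance of the statement is already contained in Lemma~\ref{LemmaMovRig3}.
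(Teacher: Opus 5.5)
Your proposal is correct and is essentially the paper's own proof: split $\nabla v = (\nabla u - A_{B(x,r)}) + (A_{B(x,r)} - A_{B(x_0,r_\Omega)})$, bound the first piece by the $L^2$ Korn inequality on the ball together with \eqref{eqeq17}, and bound the second by Lemma~\ref{LemmaMovRig3} with $\rho=r$ multiplied by $|B(x,r)|=\pi r^2$. Your concern about taking the average as the skew matrix is harmless, since symmetric and skew-symmetric matrices are orthogonal, so $A_{B(x,r)}$ minimizes $\int_{B(x,r)}|\nabla u - A|^2$ over all skew $A$ and therefore inherits any valid Korn constant.
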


\begin{proof} Let $x\in \Omega$ and $r\geq 0$ be such that $B(x,r)\subset \Omega$. Then Lemma \ref{LemmaMovRig3} applies and setting $A_0:=A_{B(x_0, r_\Omega)}$ we get
    \[ |A_{B(x,r)} - A_{0}|\leq Cr^{-\frac{1}{2}}.\]
    But then using Korn inequality \eqref{Korn2}, together with the previous inequality we can write
    \begin{eqnarray}
        \int_{B(x,r)}|\nabla v|^2 \; dy &=&\int_{B(x,r)}|\nabla u -A_{0}|^2 \; dy  \notag \\
                                        &\leq & 2 \int_{B(x,r)}|\nabla u -A_{B(x,r)}|^2 \; dy +2 \int_{B(y,r)}| A_{B(x,r)}-A_{0}|^2 \; dy  \notag \\
                                        &\leq &  C \int_{B(x,r)}|e(u)|^2 \; dx +  C r^{-1} |B(x,r)| \notag \\
                                        &\leq & C r,
    \end{eqnarray}
    where $C=C( C^*,\alpha_J)$. 
\end{proof}

With Lemma \ref{morraycontrol1} at hand, we can derive an oscillation control on the function $v$ using the standard
Morrey-Campanato theory. For instance we can state the following result, which, while not directly needed in this paper,
is interesting on its own.

\begin{corollary}\label{cor_morrey}
    Let $\Omega \subset \R^2$ be a John domain with center $x_0$, and let $u \in H^1_{\mathrm{loc}}(\Omega;\R^2)$ be a map satisfying \eqref{eqeq17}.    Define  $r_\Omega:= \dist(x_0,\Omega^c)$ and consider the function  $v$ defined by    
    \[v(x):=u(x)-A_{B(x_0,r_\Omega)} x, \quad  \forall x \in \Omega.\] 
    Then, $v\in C^{0,1/2}_{loc}(\Omega)$. Moreover if $\Omega$ is Lipschitz then $v\in C^{0,1/2}(\overline{\Omega})$.
\end{corollary}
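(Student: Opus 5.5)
The local statement follows from Lemma~\ref{morraycontrol1} and Morrey's Dirichlet-growth theorem. The global statement on a Lipschitz domain needs a separate argument that works all the way up to the boundary. By Lemma~\ref{morraycontrol1}, every ball $B(x,r)\subset\Omega$ satisfies
\[
\int_{B(x,r)}|\nabla v|^2\,dy\leq C r .
\]
In dimension two this is the Morrey condition $\int_{B(x,r)}|\nabla v|^2\leq C r^{2-2+2\alpha}$ with $\alpha=1/2$. Poincaré's inequality on balls turns it into a Campanato bound:
\[
\fint_{B(x,r)}|v-v_{B(x,r)}|^2\,dy\leq C r^2\fint_{B(x,r)}|\nabla v|^2\,dy\leq C r .
\]

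\textbf{Local Hölder estimate.} Fix a compact $K'\subset\Omega$ and set $d=\dist(K',\Omega^c)$. Take a Lebesgue point $x\in K'$ and the dyadic radii $r_k=2^{-k}r$ with $r\leq d$. The Campanato bound gives $|v_{B(x,r_{k+1})}-v_{B(x,r_k)}|\leq C r_k^{1/2}$. Summing the geometric series yields $|v(x)-v_{B(x,r)}|\leq C r^{1/2}$. For two points $x,y\in K'$ with $|x-y|=\rho\leq d/2$, compare both averages with the average over $B(x,2\rho)$, which contains $B(y,\rho)$ and has comparable measure. This gives $|v(x)-v(y)|\leq C\rho^{1/2}$, and hence $v\in C^{0,1/2}_{loc}(\Omega)$. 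The constant depends only on $C_*$ and $\alpha_J$, as long as $\rho$ is below $d/2$; no boundary information is used.

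\textbf{Boundary case, the main obstacle.} Here balls centered near $\partial\Omega$ are not contained in $\Omega$, so Lemma~\ref{morraycontrol1} cannot be used directly on them. I would use the Lipschitz structure to transfer the interior Morrey bound to the sets $\Omega\cap B(x,r)$ for $x\in\overline\Omega$.

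1. Cover $B(x,r)\cap\Omega$ by Whitney balls $B(y,\dist(y,\Omega^c)/2)$. In a Lipschitz domain (locally a graph), the Whitney balls in $B(x,r)\cap\Omega$ of size about $2^{-j}r$ number at most $C$ per dyadic layer $j$.
2. Lemma~\ref{morraycontrol1} bounds the energy of each such ball by $C2^{-j}r$. Summing over $j$ gives a convergent geometric series, so
\[
\int_{\Omega\cap B(x,r)}|\nabla v|^2\leq C r .
\]
3. The sets $\Omega\cap B(x,r)$ are uniformly Lipschitz, so they satisfy a uniform Poincaré inequality. Equivalently, one can flatten the boundary by a bi-Lipschitz chart and extend $v$ by reflection, which preserves the Morrey bound.
4. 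Campanato's characterization on the domain $\Omega$, which is of type A because it is Lipschitz, then gives $v\in C^{0,1/2}(\overline\Omega)$. Alternatively, the dyadic telescoping argument above now runs uniformly up to the boundary.

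The delicate point is the counting of Whitney balls per layer in step 1, that is, ensuring the boundary portion has linear measure, together with the uniform Poincaré inequality in step 3. Both come directly from the local graph representation of $\partial\Omega$.
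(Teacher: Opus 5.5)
Your local part is correct and matches the paper, whose proof is just Lemma~\ref{morraycontrol1} combined with Morrey's Dirichlet-growth theorem (Poincaré, Campanato, dyadic telescoping); the paper does not treat the boundary case separately. Your attempt to supply that case fails at step~1. If $x\in\partial\Omega$, the part of $\partial\Omega$ inside $B(x,r)$ has length comparable to $r$ (the ``linear measure'' you invoke). So the Whitney balls of radius about $2^{-j}r$ in $\Omega\cap B(x,r)$ number about $2^j$, not $O(1)$. Each layer then contributes about $2^j\cdot 2^{-j}r=r$, and the sum over $j$ diverges.

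This is not just a bookkeeping problem: the bound $\int_{\Omega\cap B(x,r)}|\nabla v|^2\le Cr$ is false under the hypotheses. Take the half-disk $\Omega=\{|x|<1,\ x_2>0\}$ (convex, hence Lipschitz and John) and $u(x)=(0,\sqrt{x_2})$. Then $\nabla u$ is symmetric, so $e(u)=\nabla u$ and $|e(u)|^2=1/(4x_2)$; the infinitesimal rotations vanish, so $v=u$. Any ball $B(y,\rho)\subset\Omega$ has $y_2\ge\rho$, and a direct computation gives $\int_{B(y,\rho)}\frac{dx}{4x_2}\le\frac{\pi}{2}\rho$, so \eqref{eqeq17} holds. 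Yet $\int_{\Omega\cap B(0,r)}|\nabla v|^2=+\infty$ for every $r>0$. The even reflection $\sqrt{|x_2|}$ also has infinite energy in every ball centered on the axis, so step~3 fails as well and step~4 has no valid input. This $v$ does lie in $C^{0,1/2}(\overline\Omega)$, so the statement survives, but the energy near $\partial\Omega$ can be infinite. Hence no Morrey or Campanato bound on the sets $\Omega\cap B(x,r)$ is available.

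The missing idea is to propagate oscillation along chains of interior balls, using the connectivity of Lipschitz domains rather than boundary energy. A bounded Lipschitz domain is uniform. That is, any $x,y\in\Omega$ are joined by an arc-length parametrized curve $\gamma:[0,\ell]\to\Omega$ with $\ell\le C|x-y|$ and $\dist(\gamma(t),\Omega^c)\ge c\min(t,\ell-t)$. Split $\gamma$ at $m=\gamma(\ell/2)$; both halves are escape paths toward $m$ with constant $c$. Then run the telescoping from the proof of Lemma~\ref{LemmaTwoCurves}, using Proposition~\ref{Chain}, Lemma~\ref{morraycontrol1} on the balls $MV_i\subset\Omega$, and the continuity from your local part. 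This gives $|v(x)-m_{D_0}|+|v(y)-m_{D_0}|\le C\ell^{1/2}$ with the common ball $D_0=B(m,c\ell/8)$. Hence $|v(x)-v(y)|\le C|x-y|^{1/2}$ for all $x,y\in\Omega$, and $v$ extends to $\overline\Omega$ with the same modulus.
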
 
\begin{proof} This is a direct consequence of Lemma \ref{morraycontrol1} together with \cite[page 43]{giaquinta}.
\end{proof}

\begin{remark}
    Here, we focus on the special case of the Morrey-Campanato spaces $L^{2,3}$ in dimension 2 with $p=2$,
    though the same proofs extend to the general spaces $L^{p,\lambda}$ as in \cite{giaquinta} and to higher dimensions.
\end{remark}

From Corollary \ref{cor_morrey}, we now can identify every map $u \in H^1_{\mathrm{loc}}(\Omega;\R^2)$ satisfying
\eqref{eqeq17} with its continuous representative. In the sequel we will use a more precise oscillation estimate,
which we state and prove below.

\begin{lemma}
    \label{LemmaTwoCurves}
    Let $\Omega\subset\R^2$ be a John domain with constant $\alpha_J$ and center $x_0$, and let
    $r_\Omega:=\dist(x_0,\Omega^c)$. Let $u\in H^1_{\mathrm{loc}}(\Omega;\R^2)$ be a map satisfying \eqref{eqeq17}.
    Let $x,y\in\Omega$ and $\gamma_1 : [0,\ell_1]\longrightarrow\Omega$ and $\gamma_2 : [0,\ell_2]\longrightarrow\Omega$ be two John curves respectively from $x$ to $x_0$ and from $y$ to $x_0$.
    Then, setting $v(x)=u(x)-A_{B(x_0,r_\Omega)}x$, for $x\in\Omega$, we have
    \begin{equation}
        \label{eqeq39}
        |v(x)-v(y)|\leq C\ell^{1/2},
    \end{equation}
    where $C=C(\Ce, \alpha_J) \geq 1$ and $\ell=\max\{\ell_1,\ell_2\}$.  In particular,
    \begin{equation}
        |u(x)-u(y)|\leq C\ell^{1/2}(1+|A_{B(x_0,r_\Omega)}|\ell^{1/2}). \label{Holderiann}
    \end{equation}
\end{lemma}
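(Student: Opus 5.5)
We will estimate $|v(x)-v(x_0)|\leq C\ell_1^{1/2}$ and $|v(y)-v(x_0)|\leq C\ell_2^{1/2}$ separately and conclude with the triangle inequality. By symmetry it suffices to treat $x$, with the John curve $\gamma_1$ of length $\ell_1$. The tool is the chain of balls $(D_i)_{i\in\N}$ of Proposition~\ref{Chain}, with centers $x_i=\gamma_1(q^i\ell_1)$, radii $r_i=\frac{\alpha_J}{4}q^i\ell_1$, $D_0=B(x_0,\alpha_J\ell_1/4)$ and $x_i\to x$. Throughout we use the continuous representative of $v$ given by Corollary~\ref{cor_morrey}.

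\textbf{Step 1 (averages along the chain).} Set $m_i:=\fint_{D_i} v$. By Lemma~\ref{morraycontrol1}, $\int_{B}|\nabla v|^2\leq C r$ for every ball $B\subset\Omega$ of radius $r$. Combining this with Poincaré's inequality gives
\[
\fint_{B}|v-\textstyle\fint_B v|\leq C r\Bigl(\fint_B|\nabla v|^2\Bigr)^{1/2}\leq C r^{1/2}.
\]
We compare consecutive averages through the ball $V_i\subset D_i\cap D_{i+1}$ of Proposition~\ref{Chain}. Since its radius is comparable to $r_i$ (with constants depending on $M$), we obtain
\[
|m_i-m_{i+1}|\leq |m_i-\textstyle\fint_{V_i}v|+|\fint_{V_i}v-m_{i+1}|\leq C\fint_{D_i}|v-m_i|+C\fint_{D_{i+1}}|v-m_{i+1}|\leq Cr_i^{1/2}.
\]
Summing the geometric series gives $\sum_i|m_i-m_{i+1}|\leq C r_0^{1/2}\leq C\ell_1^{1/2}$.

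\textbf{Step 2 (the two endpoints).} At the endpoint $x_0$, the function $v$ is continuous and Campanato-type bounds give $|v(x_0)-m_0|\leq C r_0^{1/2}$. To see this, apply the Step 1 comparison to the concentric dyadic balls $B(x_0,2^{-k}r_0)\subset D_0$ and sum; the averages converge to $v(x_0)$ by continuity.

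At the endpoint $x$, the main subtlety is that $x$ is not the center of the $D_i$. Since $x_i\to x$, $r_i\to0$ and $v$ is continuous at $x$, we get $m_i\to v(x)$. Indeed, $|m_i-v(x_i)|\leq Cr_i^{1/2}$ by the same dyadic argument centered at $x_i$ (note $B(x_i,4r_i)\subset\Omega$), and $v(x_i)\to v(x)$ by continuity. Hence $v(x)-m_0=\sum_{i\geq 0}(m_{i+1}-m_i)$, and therefore $|v(x)-v(x_0)|\leq C\ell_1^{1/2}$. If one wants to avoid invoking continuity, one can argue at Lebesgue points and extend by density.

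\textbf{Step 3 (consequence for $u$).} Estimate \eqref{Holderiann} follows by writing
\[
u(x)-u(y)=v(x)-v(y)+A_{B(x_0,r_\Omega)}(x-y)
\]
and using $|x-y|\leq |x-x_0|+|y-x_0|\leq 2\ell$.

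The constants depend only on $\Ce$ and, through $M$ and $q$, on $\alpha_J$. I expect the main care to be needed at the endpoint $x$ in Step 2, namely justifying that the averages $m_i$ converge to the pointwise value $v(x)$. The summation in Step 1 is routine.
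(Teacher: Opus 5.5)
Your proposal is correct and follows essentially the same route as the paper: the chain of balls from Proposition~\ref{Chain}, Poincaré's inequality combined with Lemma~\ref{morraycontrol1} to get $|m_{D_i}-m_{D_{i+1}}|\leq C r_i^{1/2}$, concentric dyadic balls at $x_0$, continuity of $v$ to identify the limits of the averages, and finally the triangle inequality together with $|x-y|\leq 2\ell$. The differences are cosmetic: you compare consecutive averages through the small ball $V_i\subset D_i\cap D_{i+1}$ instead of through $MV_i$, and you justify $m_{D_i}\to v(x)$ more explicitly than the paper does.
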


\begin{proof} Let $(D_i)_{i\in\N}$ be the chain of balls with centers $x_i$ and radii $r_i$ converging to $x$ defined in Proposition~\ref{Chain}.
    We recall that $r_i=q^i r_0$, for some $q\in(0,1)$ and for every $i\in\N$, where $r_0=\alpha_J \ell_1/4$. We also recall the existence of a ball $V_i\subset D_i\cap D_{i+1}$ such that $D_i\cup D_{i+1}\subset M V_i\subset \Omega$.
       In the following, $C$ denotes a generic constant which may depend on $\Ce$, $M$, $q$, $\alpha_J$ (and in turn, only on $\Ce$ and $\alpha_J$).

    To prove  \eqref{eqeq39} let us first show that
    \begin{equation}
        |v(x)-m_{D_0}|\leq C\ell_1^{1/2}. \label{step1toprove}
    \end{equation}
    We recall that two subsequent balls $D_i$, $D_{i+1}$ have comparable radii $C^{-1} r_{i+1} \leq r_i \leq C r_{i+1}$ and relatively closed centers $\abs{x_i - x_{i+1}} \leq C r_i$.
    Recall also that by construction of the balls $D_i$, we have $\abs{x - x_i} \leq C r_i$, for all $i\in\N$.
    As $v$ is continuous (even locally Hölder), it is clear that $\displaystyle{\lim_{i \to + \infty}} m_{D_i} = v(x)$.
    Then, we infer by a telescopic argument
    \begin{equation}
        \label{eqeq88}
        |v(x)-m_{D_0}| \leq \sum_{i=0}^{+\infty} |m_{D_{i+1}} - m_{D_i}|.
    \end{equation}
    Recall that by Poincar\'e  inequality, for any ball $D\subset \Omega$,
    \begin{equation}
        \int_{D} \abs{v(z) - m_{D} }^2 \, dz \leq C |D| \int_{D} \abs{\nabla v}^2 \, dz.
    \end{equation}
    Moreover, we recall that $D_i \cup D_{i+1} \subset MV_i \subset \Omega$.

       This means that Lemma \ref{morraycontrol1} applies and says that for all $i$,
    \[\int_{M V_i}|\nabla v|^2\, dx\leq C r_i.\]

    Now we can estimate,
    \begin{eqnarray}
        |m_{D_i}-m_{MV_i}|&=& \left|\fint_{D_i} v - m_{MV_i} \;dz\right|\leq C \fint_{MV_i}|v-m_{MV_i}|  dz  \notag \\
                          &\leq&  C r_i^{-1}  \left(\int_{MV_i}|v-m_{MV_i}|^2  dz \right)^{\frac{1}{2}}
                          \leq  C\left(  \int_{MV_i}|\nabla v|^2  dz \right)^{\frac{1}{2}} \notag \\
                          &\leq & Cr_i^{1/2}.
    \end{eqnarray}
    Here $C$ depends on $M$, and thus on $\alpha_J$. A similar reasoning also yields 
    \[|m_{D_{i+1}}-m_{MV_i}|\leq  Cr_k^{\frac{1}{2}},\] 
    so that from  the triangle inequality we get $  \abs{m_{D_i} - m_{D_{i+1}}} \leq C r_i^{\frac{1}{2}}$,
    whence, by properties of geometric series, we conclude from \eqref{eqeq88} that
    \begin{align}
        |v(x)-m_{D_0}|  \leq C \sum_{i=0}^{+\infty} r_i^{1/2}  \leq C r_0^{1/2}  \leq C \ell_1^{1/2},
    \end{align}
    and \eqref{step1toprove} follows.

    \medskip

    Next by use of a very similar argument we also establish   the following estimate:
    \begin{equation}
        \label{eqeq30}
        |m_{D_0}-v(x_0)|\leq C\ell_1^{1/2}.
    \end{equation}
    Indeed, for every $i\in\N$, let us consider $C_i=B(x_0,\rho_i)$, where $\rho_i=2^{-i}r_0$. We observe that $C_0=D_0$.  Reasoning as before we easily prove that 
    \[    \abs{m_{C_i} - m_{C_{i+1}}}\leq C\rho_i^{1/2}.\]
    As $v$ is continuous, we directly have that $m_{C_i} \xrightarrow[i \to +\infty]{} v(x_0)$.
    Then, reasoning as in step 1, we obtain the estimate \eqref{eqeq30}.


    \medskip

    All in all, we have proved that 
    \[|v(x)-v(x_0)|\leq C \ell_1^{1/2}.\]
    Arguing exactly in the same way for $y$, we also have 
    \begin{equation}
        \label{eqeq37}
        |v(y)-v(x_0)|\leq C\ell_2^{1/2},
    \end{equation}
    and  \eqref{eqeq39} follows from the triangle inequality.

    Now to prove \eqref{Holderiann}, denoting $A_0:=A_{B(x_0,r_\Omega)}$ and using the fact that $|x-y|\leq 2\ell$ (recall
    $\gamma_i$ is $1$-Lipschitz),
    \begin{equation}
        |u(x)-u(y)|\leq |v(x)-v(y)|+|A_0||x-y|\leq C\ell^{1/2}(1+|A_0|\ell^{1/2}),
    \end{equation}
    which concludes the proof of the lemma.
\end{proof}

\subsection{H\"older bounds far from the singular set}
\label{SubsectionFarSing}

As an application of the oscillation estimate on $u$ obtained in the preceding section, we first derive an H\"older-type
estimates for $u$ away from the singular set. In the absence of a gauge $h$, this would alternatively follow from
elliptic regularity.

\begin{proposition}\label{reginterieure}
    Let $(u,K)$ be a topological almost-minimizer for the Griffith energy on $\Omega\subset \R^2$. Then for any ball $B(x_0,r) \subset \Omega \setminus K$ such that $h(r) < +\infty$, we have $u\in H^1(B(x_0,r))\cap L^\infty(B(x_0,r))$.
\end{proposition}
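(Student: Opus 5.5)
We would apply the machinery of Subsection~\ref{SubsectionOscillation} with the John domain taken to be the ball $B(x_0,r)$ itself. A disk is a John domain with center $x_0$ and a universal constant (radial segments give $\dist(\gamma(t),\partial B)\geq t$ for the curve run from $x$ toward $x_0$). The main point is therefore to verify the Morrey-type hypothesis \eqref{eqeq17} on $B(x_0,r)$, namely
\begin{equation}
\int_{B(x,\rho)}|e(u)|^2\,dy\leq C_*\rho\quad\text{for all } B(x,\rho)\subset B(x_0,r),
\end{equation}
and then read off the conclusions from Lemma~\ref{LemmaTwoCurves} and Lemma~\ref{morraycontrol1}.

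To obtain \eqref{eqeq17}, we take any $B(x,\rho)\subset B(x_0,r)\subset\Omega\setminus K$ and build a competitor in a slightly smaller concentric ball $B(x,\rho')$. Set $L=K\cup\partial B(x,\rho')$ and let $v=u$ outside $B(x,\rho')$ and $v=0$ inside. Since $\partial B(x,\rho')$ is added to the crack, $v\in H^1_{loc}(\Omega\setminus L)$, and $L\setminus B(x,\rho')=K\setminus B(x,\rho')$. The competitor is topological, because $L\supset K$ and adding closed pieces to the crack can only separate more points. Almost-minimality, with $K\cap B(x,\rho')=\emptyset$ and $h(\rho')\leq h(r)<\infty$, gives
\begin{equation}
\int_{B(x,\rho')}\A e(u):e(u)\leq 2\pi\rho'+h(r)\rho'.
\end{equation}
Coercivity of $\A$ then yields $\int_{B(x,\rho')}|e(u)|^2\leq C(1+h(r))\rho'$. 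Letting $\rho'\uparrow\rho$ gives \eqref{eqeq17} with $C_*=C(1+h(r))$. The closure condition $\overline{B}(x,\rho')\subset\Omega$ holds because $\rho'<\rho$. This step is essentially the upper bound \eqref{eq_AR2}, rederived here without requiring $h$ small.

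Next we take $\Omega'=B(x_0,r)$ and $r_{\Omega'}=r$. Every $x\in\Omega'$ is joined to $x_0$ by a John curve of length $\ell\leq r$, so Lemma~\ref{LemmaTwoCurves} (via \eqref{Holderiann}) gives
\begin{equation}
|u(x)-u(x_0)|\leq Cr^{1/2}(1+|A_{B(x_0,r)}|r^{1/2}).
\end{equation}
Here $u\in H^1_{loc}$, so $A_{B(x_0,r)}$ needs to be shown finite; the subtle point is that $\nabla u$ might a priori fail to be integrable up to $\partial B(x_0,r)$. To handle this, we apply Lemma~\ref{morraycontrol1} and Lemma~\ref{LemmaTwoCurves} to the John domain $B(x_0,r-\eta)$ and let $\eta\to0$. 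Alternatively, we define the rotation with the average over the fixed inner ball $B(x_0,r/2)$, using Lemma~\ref{LemmaMovRig2} to compare, so that $A_0$ is finite. In either case $u$ is bounded on $B(x_0,r)$.

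For the $H^1$ bound, Lemma~\ref{morraycontrol1} applied on balls exhausting the domain, combined with the same limiting argument and monotone convergence, gives
\begin{equation}
\int_{B(x_0,r)}|\nabla u-A_0|^2\leq Cr.
\end{equation}
Hence $\nabla u\in L^2(B(x_0,r))$. The main obstacle is exactly this passage to the boundary: making sure the constants from Lemma~\ref{LemmaMovRig3} do not degenerate as $\eta\to0$. They do not, since the John constant of $B(x_0,r-\eta)$ is uniform and $C_*$ is fixed. The alternative is to use Proposition~\ref{kornJohn} directly with $p=2$ on the disk.
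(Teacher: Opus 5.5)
Your proposal is correct and follows the paper's route: establish \eqref{eqeq17} on $B(x_0,r)$ with a constant depending on $\A$ and $h(r)$, treat the disk as a John domain centered at $x_0$, use Lemma~\ref{LemmaTwoCurves} for the $L^\infty$ bound, and use Korn for $\nabla u\in L^2$. The paper simply cites \eqref{Korn1}--\eqref{Korn2} for that last step, whereas your exhaustion by $B(x_0,r-\eta)$ makes the finiteness of the rotation explicit. One small fix: $\partial B(x,\rho')$ lies outside the open ball, so $(v,K\cup\partial B(x,\rho'))$ is a topological competitor in $B(x,\rho'')$ for any $\rho'<\rho''<\rho$ rather than in $B(x,\rho')$; this gives the same bound, and \eqref{eq_AR2} already provides it anyway.
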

\begin{proof}
    The fact that $u\in H^1(B(x_0,r))$ directly follows from the Korn inequalities \eqref{Korn1} and
    \eqref{Korn2}. Then we notice that the gradient control \eqref{eqeq17} holds true in all balls $B' \subset B(x_0,r)$ (with a constant $\Ce$ which depends only on $\A$ and $h(r)$).
    The ball $B(x_0,r)$ is clearly John domain with center $x_0$: every $x \in B(x_0,r)$ can be connected to $x_0$ by an escape path of length $\leq r$ and a universal John constant $\alpha_J$.
    Therefore, Lemma~\ref{LemmaTwoCurves} can be applied in $B(x_0,r)$ implying that there exists a skew-symmetric matrix $A$ such that for all $x \in B(x_0,r)$,
    \begin{equation*}
        \abs{u(x) - u(x_0)} \leq C r^{1/2} (1 + r^{1/2} \abs{A}).
    \end{equation*}
    This proves that $u \in L^{\infty}(B(x_0,r))$.
\end{proof}

\subsection{Local John domains and H\"older bounds near the singular set}
\label{SubsectionNearSing}

We now prove the first part of the main results of  this paper, stated as follows.

\begin{theorem}\label{holderestim}
    There exists $\kappa \geq 3$, $\tau > 0$, $n_0 \in \N$ (which depend only on $\A$) such that the following holds.
    Let $(u,K)$ be a topological almost-minimizer with gauge $h$ in $\Omega$.
    Let $x_0 \in K$ and $r_0 > 0$ be such that
    \begin{equation*}
        B(x_0,\kappa r_0) \subset \Omega \quad \text{and} \quad h(\kappa r_0) \leq \tau.
    \end{equation*}
    Then   for all $r \in (0,r_0]$,
    there exists a finite covering $(U_i)_i$ with at most $n_0$ connected open subsets of $B(x_0,3r) \setminus K$ such that
    \begin{equation}
        B(x_0,r)\setminus K\subset \bigcup_i U_i\subset B(x_0,3r)\setminus K
    \end{equation}
    and each $U_i$ is a John domain with center $x_i\in \partial B(x_0,2r)$ and with a John constant depending only on $\A$. Moreover, 
    \begin{equation}
        |u(x)-u(y)|\leq Cr^{1/2},\quad \text{for all} \; x,y\in U_i,\quad \text{and for all} \; i, \label{estiMM}
    \end{equation}
where the constant $C>0$  may depend on  $r_0$, $\alpha_J$, $\bf A$, and on  $u$ itself, but does not depend on $r$.\end{theorem}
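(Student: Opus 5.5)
Fix $r\in(0,r_0]$. The plan is to build the $U_i$ from escape paths given by Proposition~\ref{PropJohnCurveLoc}, then apply Lemma~\ref{LemmaTwoCurves} in each $U_i$ with a bound on the rotation $A_{B(x_i,r_{U_i})}$ coming from the full region.

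\emph{Step 1 (escape paths).} Choose $\kappa$ large, so that for every $x\in B(x_0,r)\setminus K$ the ball $B(x,\ell)$ with $\ell:=r/2$ lies in $B(x_0,\kappa r_0)\subset\Omega$ and $h(\ell)\le\tau$. Proposition~\ref{PropJohnCurveLoc} then gives an escape path $\gamma_x\colon[0,\ell]\to\Omega\setminus K$ with $\dist(\gamma_x(t),K)\ge\alpha_J t$ and endpoint $y_x:=\gamma_x(\ell)$ satisfying $\dist(y_x,K)\ge\alpha_J r/2$. Since the path is $1$-Lipschitz, $|y_x-x_0|<3r/2$.

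\emph{Step 2 (finitely many centers on $\partial B(x_0,2r)$).} Connect $y_x$ to $\partial B(x_0,2r)$. I would apply Proposition~\ref{PropJohnCurveLoc} again, using a longer length $\ell'\simeq r$ chosen so that the path is forced to exit. This needs $\kappa\ge3$ and a careful choice of lengths. The concatenated path is still an escape path, with constant comparable to $\alpha_J$ (Lemma~\ref{LemmaJohnSegment} handles the junctions). Stop it at its first hitting point $z_x\in\partial B(x_0,2r)$, which satisfies $\dist(z_x,K)\ge c\,r$.

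Cover $\partial B(x_0,2r)$ by $N=N(\alpha_J)$ arcs of length $\ll c\,r$ and pick a centre $x_j$ in each arc that contains such a $z_x$. By Lemma~\ref{LemmaJohnSegment}(1), each path can be redirected to end at its $x_j$ with constant $\alpha_J/4$. Set
\[
U_j:=\bigcup\Bigl\{\,B\bigl(\gamma(t),\tfrac{\alpha_J}{8}t\bigr)\cup B\bigl(x_j,\tfrac{\alpha_J}{8}\ell_\gamma\bigr)\Bigr\},
\]
the union over all such redirected paths $\gamma$ ending at $x_j$, together with the corresponding balls. Each $U_j$ is open and connected. It is John with centre $x_j$, by the usual "union of twisted cones" argument: a point in $B(\gamma(t),\alpha_J t/8)$ first goes straight to $\gamma(t)$ (Lemma~\ref{LemmaJohnSegment}(2)) and then follows $\gamma$. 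To keep $U_j\subset B(x_0,3r)$, I truncate the paths and balls; the $1$-Lipschitz bound together with $\ell\lesssim r$ should suffice once the constants are tuned. This gives $n_0:=N$.

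\emph{Step 3 (Hölder bound).} In $U_j$, the bound \eqref{eq_AR2} gives \eqref{eqeq17} with $\Ce=C(1+\tau)$. Lemma~\ref{LemmaTwoCurves}, with John curves of length $\lesssim r$ by Remark~\ref{boundl}, gives $|u(x)-u(y)|\le Cr^{1/2}(1+|A_j|r^{1/2})$, where $A_j:=A_{B(x_j,r_{U_j})}$ and $r_{U_j}\simeq r$. It remains to bound $|A_j|$ uniformly in $r$.

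\emph{Main obstacle.} This rotation bound is the step I expect to be hardest. The idea is to compare $A_j$ with a fixed ball at scale $r_0$. Apply the whole construction at scale $r_0$ with centre $x_{j_0}$, and chain from $B(x_j,c r)$ to $B(x_{j_0},c r_0)$ along an escape path from $x_j$ that stays in $\Omega\setminus K$ and has length $\simeq r_0$. Lemmas~\ref{LemmaMovRig}–\ref{LemmaMovRig2}, applied along the chain of balls from Proposition~\ref{Chain}, give $|A_j-A_{B(x_{j_0},cr_0)}|\le C r^{-1/2}$. Hence $|A_j|r^{1/2}\le C + |A_{B(x_{j_0},cr_0)}|\,r_0^{1/2}$. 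The right-hand side is finite by Proposition~\ref{reginterieure} and depends only on $u$ and $r_0$, which yields \eqref{estiMM}. Since the escape path from $x_j$ might reach any of finitely many large-scale centres, I take the maximum over those, which is still independent of $r$.
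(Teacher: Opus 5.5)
Your construction of the covering follows the paper's proof. You use escape paths from Proposition~\ref{PropJohnCurveLoc} forced across $\partial B(x_0,2r)$ at distance $\gtrsim r$ from $K$, a separated family of centres on that circle, and redirection to a nearby centre with Lemma~\ref{LemmaJohnSegment}(1). You then take $U_j$ as a union of balls carried by the redirected paths and apply Lemma~\ref{LemmaTwoCurves} for the oscillation. Attaching balls of radius $\simeq\alpha_J t$ along the whole paths is actually cleaner than the paper's $\Gamma_i$. That set is restricted to $B_r$, yet its John argument uses intermediate points of the paths.

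Where you go beyond the paper is the final step. The paper's proof stops at \eqref{eqeq19}, i.e.\ at $|u(x)-u(y)|\le Cr^{1/2}(1+r^{1/2}|A_{B(x_i,r_i)}|)$. It never shows that $r^{1/2}|A_{B(x_i,r_i)}|$ is bounded independently of $r$. Lemma~\ref{LemmaMovRig2}, which it announces, cannot do this alone, because every ball centred at $x_i$ of radius larger than $2r$ contains $x_0\in K$. Your chaining argument is in effect the proof of Lemma~\ref{LemmaMovRig3} run along an escape path relative to $K$, from $B(x_j,cr)$ to one of finitely many fixed centres at scale $\simeq r_0$. It is followed by a concentric comparison of $B(x_j,cr)$ with $B(x_j,r_{U_j})$, which is legitimate since $r_{U_j}\simeq r$ by Remark~\ref{boundl} and $\dist(x_j,K)\le 2r$. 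This argument is correct and gives $r^{1/2}|A_j|\le C+r_0^{1/2}\max_{j_0}|A_{B(x_{j_0},cr_0)}|$. That is exactly the $r$-independence Corollary~\ref{main3} relies on.

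A few details to tidy up:
\begin{enumerate}
\item The two-stage path in Step~2 is unnecessary. A single application of Proposition~\ref{PropJohnCurveLoc} from $x$ with $\ell=2\alpha_J^{-1}r$ already forces the exit, since $\dist(\gamma(\ell),K)\ge 2r$ and $x_0\in K$, and the first hitting time is at least $r$. If you keep two stages, note that concatenating two escape paths is not literally Lemma~\ref{LemmaJohnSegment}. It degrades the constant to order $\alpha_J^2$, which is harmless.
\item As written, your $U_j$ need not contain the starting points $x=\gamma(0)$. The ball attached to $t=0$ is empty, and $|\gamma(t)-x|$ may exceed $\alpha_J t/8$. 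Add $B(x,\dist(x,K))$, or a fixed fraction of it, for every starting point, and treat these balls with Lemma~\ref{LemmaJohnSegment}(2).
\item For $r$ close to $r_0$ the point $x_j$ lies outside $B(x_0,r_0)$. So the large-scale centres should be taken at a scale such as $3r_0$, which only requires enlarging $\kappa$, as you allowed.
\end{enumerate}
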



\begin{proof}[Proof of Theorem \ref{holderestim}]
    Let $x_0 \in K$. Without loss of generality, we may assume that $x_0 = 0$.
    Let $\alpha_J \in (0,1)$ and $\tau > 0$ denote the constants appearing in Proposition~\ref{PropJohnCurveLoc}.
    Fix $\kappa := 1+2\alpha_J^{-1} > 3$ and $r_0 > 0$ such that
    \begin{equation}
        B(0, \kappa r_0) \subset \Omega \quad \text{and} \quad h(\kappa r_0) \leq \min(1,\tau).
    \end{equation}
    Note that \eqref{eqeq17} holds true in all balls $B' \subset B(x_0,\kappa r_0)$ with a constant $\Ce$ which depends
    only on $\A$.
    Thus we will be in position to apply Lemma~\ref{LemmaTwoCurves} and Lemma~\ref{LemmaMovRig2}.
    Let $r \in (0,r_0]$.

    We first check that for all $x \in B_r \setminus K$,
    there exists a $1$-Lipschitz curve $\gamma\colon[0,2r]\longrightarrow\Omega\setminus K$
    such that $\gamma(0)=x$ and 
    \begin{equation*}
        \spt(\gamma) \cap\dd B_{2r}\cap \{\dist(\cdot,K)\geq 2\delta r\}\neq\emptyset,
    \end{equation*}
    where $\delta := \alpha_J/4$, and
    \begin{equation}
        \label{eqeq35}
        \dist(\gamma(t),K)\geq\alpha_J t,\quad\forall t\in[0,\ell],
    \end{equation}
    where $\alpha_J > 0$ depends only on $\A$. 
    By applying Proposition~\ref{PropJohnCurveLoc} with $\ell = 2 \alpha_J^{-1} r$, we obtain an escape curve $\gamma : [0,\ell] \longrightarrow \Omega \setminus K$ starting from $x$.
    Notice that since $0 \in K$,
    \begin{equation}
        \abs{\gamma(\ell)} = \abs{\gamma(\ell) - 0} \geq \mathrm{dist}(\gamma(\ell),K) \geq \alpha_J \ell = 2r.
    \end{equation}
    We deduce that there exists $s \in [0,\ell]$ such that $\gamma(s) \in \partial B_{2r}$. Moreover, using   $\gamma(s) \in \dd B_{2r}$, $\gamma(0) \in B_r$ and the fact that $\gamma$ is $1$-Lipschitz, we have
    $r \leq \abs{\gamma(s) - \gamma(0)} \leq s$ so $r \leq s$. Then,
    \begin{equation}
        \mathrm{dist}(\gamma(s),K) \geq \alpha_J s \geq \alpha_J r \geq 2 \delta r,
    \end{equation}
    by definition of $\delta$. This proves our claim.

    Next, consider a maximal family of points $(x_i)_{1 \leq i \leq  n(r)}$ in $\dd B_{2r}\cap \{\dist(\cdot,K)\geq 2\delta r\}$ such that $\abs{x_i - x_j} \geq \delta r$.
    It is clear that for all $i$, $B(x_i,\delta r) \subset \{\dist(\cdot,K) \geq \delta r\}$ and that the set \[E := \{x_i \mid 1 \leq i \leq n(r)\}\] is $2\delta r$-dense in $\dd B_{2r} \cap \{\dist(\cdot,K) \geq 2 \delta r\}$.
    Moreover, we note that since the balls $B(x_i,\delta r/2)$ are mutually disjoint and contained in $B_{3r}$, there must be at most $n(r) \leq n_0$ such points, where $n_0 \in \N$ depends only on $\delta$, and thus only on $\A$.

    Now we prove that every point of $B_r\setminus K$ can be joined to a point of $E$ by means of a curve which satisfies the escape condition. Recall that for all $x \in B_r \setminus K$, there exists a $1$-Lipschitz curve $\gamma \colon [0,\ell] \longrightarrow \Omega \setminus K$ such that $\gamma(0) = x$ and such that there exists $s \in [r,\ell]$ satisfying $\gamma(s) \in \dd B_{2r} \cap \{\dist(\cdot,K)\geq 2\delta r\}$.  We may require that 
    \begin{equation}
        \label{eqeq28}
        s=\min \big\{t\in[r,\ell]\,:\,\gamma(t)\in\dd B_{2r}\cap \{\dist(\cdot,K)\geq 2\delta r\}\big\}.
    \end{equation}

    Since $\gamma(s) \in \dd B_{2r} \cap \{\dist(\cdot,K) \geq 2 \delta r\}$, there exists $x_i \in E$ such that $\abs{x_i - \gamma(s)} < 2\delta r = \alpha_J r/2$.
    Then by applying Lemma~\ref{LemmaJohnSegment} to the restriction of $\gamma : [0,s] \longrightarrow \Omega \setminus K$, there exists a $1$-Lipschitz curve $\tilde{\gamma} : [0,2s] \longrightarrow \Omega \setminus K$ such that $\tilde{\gamma}(0) = x$, $\tilde{\gamma}(2s) = x_i$ and
    \begin{equation}\label{eqeq14}
        \dist(\tilde{\gamma}(t),K)\geq \alpha_J t/4, \quad\forall t\in[0,2s].
    \end{equation}
By the construction of $\tilde{\gamma}$ and \eqref{eqeq28}, we have that $\mathrm{spt}(\tilde{\gamma})\subset \overline{B}_{2r}$.


    We define
    \begin{align}
        \Gamma_i=\{
    & x\in B_r\setminus K\,:\,\text{there exists $\ell > 0$ and a 1-Lipschitz curve }\gamma\colon[0,\ell]\rightarrow \overline{B}_{2r}\setminus K\\
    &\text{s.t.}\,\gamma(0)=x,\,\gamma(\ell)=x_i, \gamma\text{ satisfies }\eqref{eqeq14}\}.
    \end{align}
    and then  
    \begin{equation}
        U_i = \bigcup_{x\in \Gamma_i}B(x,\dist(x,K)). \label{defUi}
    \end{equation}
    Note that some $U_i$'s may be empty. It is immediate to show that
    \begin{equation}
        B_r\setminus K\subset\bigcup_{i=1}^{n(r)}U_i\subset B_{3r}\setminus K.
    \end{equation}

    Fix $i$ such that $U_i \ne \emptyset$.   We define $c := \alpha_J/4 < 1$, the constant appearing in \eqref{eqeq14}. Let us prove that $U_i$ is a John domain, with center $x_i$ and constant $c/10$. It is clear that $U_i$ is open, by the definition. Let now $y\in U_i$, and let us distinguish two cases. In the first case we assume that 
    $y \in \Gamma_i$. 
    Then there exists an escape path $\gamma:[0,\ell] \longrightarrow \R^2$ from $y$ to $x_i$, with constant $c$. We claim that this escape path is a John curve for $U_i$. Indeed, we know that for all $t\in [0,\ell]$,
    \[\dist(\gamma(t),K)\geq c  t.\]
    Now let $z=\gamma(t_0)$ for some fixed $t_0 \in [0,\ell]$. Then the curve $t \mapsto \gamma(t+t_0)$ defined on $[0,\ell - t_0]$ is still an escape path starting at $z$, up to $x_i$, with constant $c$. This shows that $z \in \Gamma_i$. Then, by definition of $U_i$, the ball $B(z,\dist(z,K))$ is contained in $U_i$ as well so that
    \[\dist(\gamma(t_0),U_i^c)\geq \dist(\gamma(t_0),K)\geq c  t_0,\]
    which proves that the curve $\gamma$ is a John curve for $U_i$ with constant $c$, starting at $y$.

    Now in a second case we assume that $y \in U_i \setminus \Gamma_i$. Then it means that there exists $y'\in \Gamma_i$ such that $y\in B(y', \dist(y',K))$. By definition of $\Gamma_i$, there exist a $c$-escape path $\gamma$ from $y'$ to $x_i$. As shown in Lemma \ref{LemmaJohnSegment}, the curve obtained by adding the segment $[y,y']$ to the support of $\gamma$ can be parameterized by an escape path $\tilde \gamma$ on $[0,\ell+t_0]$, where $t_0=|y-y'|$, with constant $c/10$. 
    We already know, by the reasoning used in the first case, that for all 
    $t\in [t_0,\ell + t_0]$, the whole ball $B(\tilde\gamma(t),\dist(\tilde \gamma(t),K))$ is contained in $U_i$ so that 
    \[\dist(\tilde \gamma(t),U_i^c) \geq \dist(\tilde \gamma(t),K) \geq \frac{ct}{10},\]
    where the last inequality comes from the fact that $\tilde \gamma$ is an escape curve of constant $c/10$.
    Then on the remaining part $[0,t_0]$, the curve $\tilde \gamma$ consists of the arc-length parameterisation of the segment 
    $[y,y']$, in other words $\tilde \gamma(t)=y+ t(y'-y)/t_0$. Moreover   we know that $B:=B(y', \rho)$, with $\rho:=\dist(y',K)$, is contained in $U_i$ because $y' \in \Gamma_i$. Recall that by definition, $y \in B(y',\rho)$ so $t_0 \leq \rho$. This implies that for $t \in [0,t_0]$,
    \begin{align}
        \dist(\tilde \gamma(t), U_i^c)\geq \dist(\tilde \gamma(t), B^c) &= \rho - \abs{\tilde \gamma(t) - y'}\\
                                                                        &= \rho-(t_0 - t) \geq t\geq \frac{ct}{10},
    \end{align}
    where we have used that   $c\leq 1$. This proves that $\tilde \gamma$ is a John curve associated to $U_i$, from $y$ to $x_i$ and   achieves the proof of the fact that $U_i$ is a John domain with center $x_i$, and constant $c/10$.

    By Lemma~\ref{LemmaTwoCurves}, and recalling that the length of a John curve in $U_i$ is always bounded by $C \mathrm{diam}(U_i) \leq Cr$
    by Remark \ref{boundl}, we have that for all $x,y \in U_i$,
    \begin{equation}\label{eqeq19}
        |u(x)-u(y)|\leq C r^{1/2}(1+r^{1/2} |A_{B(x_i, r_i)}|),
    \end{equation}
    with $r_i:=\dist(x_i,U_i^c)$. 
    \end{proof}

\subsection{Finite number of traces}
\label{SubsectionTraces}

Finally, we give the proof of Theorem~\ref{main3}, which is a direct consequence of Theorem~\ref{holderestim}.

\begin{proof}[Proof of Corollary~\ref{main3}]
    Let $n_0$ be the constant of Theorem~\ref{holderestim}. Assume for a contradiction that $E(x_0)$ contains more than $n_0$ points. Let $(a_k)_{1\leq k\leq n_0+1}$ be a set of $n_0+1$ points in $E(x_0)$ and set $\delta:= \min_{k\neq k'}|a_k-a_{k'}|$ be the minimal distance between them.
    Thanks to Theorem~\ref{holderestim}, we can find a radius $r_0 > 0$ such that for any $r\in (0,r_0)$ there exists a covering $(U_i)_{1\leq i\leq n_0}$ of connected subsets of $B(x_0,3r)\setminus K$ (some of the $U_i$'s may be possibly empty) such that
    \begin{equation}
        |u(x)-u(y)|\leq C_0r^{1/2},\quad\forall x,y\in U_i. \label{holderEstim00}
    \end{equation}
    We can additionally choose $r$ so small that
    \begin{equation}
        C_0 r^{1/2}\leq \frac{\delta}{4},
    \end{equation}
    This radius $r$ is now fixed for the rest of the proof.

    Now let $a \in E(x_0)$. This means that there exists a sequence $(y_k)_k$ in $\Omega \setminus K$ such that $y_k\to x_0$ and $u(y_k)\to a$.
    By the finiteness of the $U_i$'s we can further extract a subsequence (not relabeled) such that the sequence $(y_k)_{k\in\N}$ lies in a single set $U_i$. 
    Then, consider an other sequence $(z_k)_{k\in\N}$ converging to $x_0$ in the same set $U_i$, with $u(z_k)\to a'$. By \eqref{holderEstim00}, it holds
    \[|u(y_k)-u(z_{k})|\leq C_0 r^{1/2}\leq \frac{\delta}{4}\]
    and passing to the limit, we obtain
    \begin{equation}
        |a-a'|\leq \frac{\delta}{4}.\label{distll}
    \end{equation}
    Hence, all the possible limit values $a$ coming from $U_i$ are contained in a ball $B_i\subset \R^2$ of radius   $\delta/2$. 
    We conclude that
    \[E(x_0)\subset \bigcup_{i=1}^{n_0} B_i,\]
    with $B_i$ of radius  $\delta/2$. This contradicts the fact that $E(x_0)$ contains $n_0+1$ distinct points of mutual distance greater than $\delta$. We therefore conclude that $E(x_0)$ contains at most $n_0$ points.
\end{proof} 

\section{Proof of Theorem \ref{main1}}\label{SBV}

Nexte, provide a proof of Theorem~\ref{main1}, which is a consequence of Theorem~\ref{holderestim}.

\begin{proof}[Proof of   Theorem~\ref{main1}]
    Let $\kappa > 3$, $\tau > 0$ and $n_0 \in \N$ be the constants of Theorem~\ref{holderestim}. Let $(u,K)$ be an almost-minimizer for the Griffith energy in $\Omega \subset \R^2$.

    Let $x_0\in K$ be given. Applying Theorem~\ref{holderestim}, we get the existence of $r_0>0$ such that $B(x_0,\kappa r_0)\subset \Omega$ and a finite covering $(U_i)_{1\leq i\leq n_0}$  of $B(x_0,r_0)\setminus K$ by open subsets of $B(x_0,3r_0)\setminus K$ such that $U_i$ is a John domain with a constant depending only on $\A$.
    In virtue of Proposition~\ref{kornJohn} we know that for all $i$ we have a rigid motion $a_i$ such that 
    \[\int_{U_i} |u-a_i|^2\,dx \leq C {\rm diam}(U_i)^p \int_{U_i} |e(u)|^2 \,dx,\]
    \[\int_{U_i} |\nabla u -A_i|^2\,dx \leq C \int_{U_i} |e(u)|^2 \,dx,\]
    where $C \geq 1$ is a constant that depends only on $\A$.  
    Notice that since $e(u)\in L^2(U_i)$ for all $i$,  the estimates above imply that $u \in H^1(B(x_0,r_0)\setminus K)$. We also know from Theorem~\ref{holderestim} that for all $i$ and for all $x,y \in U_i$ we have
    \[|u(x)-u(y)|\leq C r_0^{1/2},\]
    for some constant $C \geq 1$. Fixing any arbitrary $x_i \in U_i$, the above estimate gives 
    \[\sup_{x\in U_i}|u(x)|\leq Cr_0^{1/2}+ |u(x_i)|,\]
    which implies $u\in L^\infty(U_i)$. Since there are at most $n_0$ domains $U_i$, we deduce that $u \in L^\infty(B(x_0,r_0)\setminus K)$. 
    This proves that for every $x \in K$ there exists a radius $r > 0$ such that
    $u \in H^1(B(x,r) \setminus K) \cap L^\infty(B(x,r))$.

    Now, we justify that if $U$ is an open set such that $\overline{U}\subset \Omega$, then $u \in H^1(U \setminus K) \cap L^{\infty}(U)$. Since we already know that $u \in H^1_{\mathrm{loc}}(\Omega \setminus K)$ by definition of almost-minimizers, it is only left to check that
    \begin{equation}\label{eq_sumH1L}
        \lVert \nabla u \rVert_{L^2(U)} + \lVert u \rVert_{L^\infty(U)} < +\infty.
    \end{equation}
    The set $K \cap \overline{U}$ is compact so it can be covered by finitely many balls $B_i = B(x_i,r_i)$, with $x_i \in K \cap\overline{U}$, $r_i > 0$, $B(x_i,r_i) \subset \Omega$ and \[u \in H^1(B_i \setminus K) \cap L^{\infty}(B_i).\]
    The set $\overline{U} \setminus \bigcup_i B(x_i,r_i)$ is also compact and is disjoint from $K$ so it can be covered by finitely many balls $B_j = B(y_j,r_j)$, with $y_j \in \overline{U}$, $r_j > 0$, $B(y_j,r_j) \subset \Omega \setminus K$ and $h(r_j) < +\infty$. In particular, Proposition~\ref{reginterieure} shows that \[u \in H^1(B_j) \cap L^{\infty}(B_j).\]
    As the domain $U$ is covered by the combined finite family $(B(x_k,r_k))_k \cup (B(y_k,r_k))_k$, we conclude that (\ref{eq_sumH1L}) holds.
    By \cite[Proposition~4.4]{afp}, this implies $u \in SBV(U)$, and since $\nabla u\in L^2(U\setminus K)$, we actually get $u \in SBV^2(U)$, which completes the proof of Theorem~\ref{main1}.
\end{proof}

\bibliographystyle{alpha}
\bibliography{biblio}

\medskip
\noindent
\textsc{Université de Lorraine, CNRS, IECL, F-54000 Nancy, France}

\end{document}